\documentclass[9pt]{article}
\usepackage{amsfonts,amssymb,amsmath,comment}
\usepackage{graphicx,graphics,color}


\topmargin -0.5in \textheight 9in \oddsidemargin 0.15in
\evensidemargin 0.25in \textwidth 6.15in


 \makeatletter
 \@addtoreset{equation}{section}
 \makeatother

 \newcounter{enunciato}[section]

 \newtheorem{ittheorem}{Theorem}
 \newtheorem{itlemma}{Lemma}
 \newtheorem{itproposition}{Proposition}
 \newtheorem{itdefinition}{Definition}
 \newtheorem{itcorollary}{Corollary}
 \newtheorem{itconjecture}{Conjecture}

 \newenvironment{theorem}{\addtocounter{enunciato}{1}
 \begin{ittheorem}}{\end{ittheorem}}

 \newenvironment{lemma}{\addtocounter{enunciato}{1}
 \begin{itlemma}}{\end{itlemma}}


\parskip=3pt plus 1pt minus 1pt

\newcommand{\halmos}{\rule{1ex}{1.4ex}}

\newenvironment{proof}{\noindent {\em Proof}.\,\,}
   {\hspace*{\fill}$\halmos$\medskip}


\def \ba {\begin{array}}
\def \ea {\end{array}}

\def \Z {{\mathbb Z}}
\def \R {{\mathbb R}}

\def \N {{\mathbb N}}

\def \P {{\mathbb P}}
\def \E {{\mathbb E}}
\def \T {{\mathbb T}}

\def \cT {{\mathcal T}}
\def \cB {{\mathcal B}}
\def \cF {{\mathcal F}}
\def \cE {{\mathcal E}}
\def \cI {{\mathcal I}}
\def \cA {{\mathcal A}}
\def \cB {{\mathcal B}}
\def \cC {{\mathcal C}}

\def \ra {\rightarrow}

\def \cp {\mathrm{cap}\,}
\def\O{\Omega}
\def\eps{\varepsilon}


\begin{document}
\title{Torsional rigidity for regions\\
with a Brownian boundary}

\author{\renewcommand{\thefootnote}{\arabic{footnote}}
M.\ van den Berg
\footnotemark[1]
\\
\renewcommand{\thefootnote}{\arabic{footnote}}
E.\ Bolthausen
\footnotemark[2]
\\
\renewcommand{\thefootnote}{\arabic{footnote}}
F.\ den Hollander
\footnotemark[3]
}

\footnotetext[1]{ School of Mathematics, University of Bristol,
University Walk, Bristol BS8 1TW, United Kingdom. }

\footnotetext[2]{
Institut f\"ur Mathematik, Universit\"at Z\"urich,
Winterthurerstrasse 190, CH-8057 Z\"urich, Switzerland.
}

\footnotetext[3]{
Mathematical Institute, Leiden University, P.O.\ Box 9512,
2300 RA Leiden, The Netherlands.
}
\date{5 July 2017}

\maketitle

\begin{abstract}
Let $\T^m$ be the $m$-dimensional unit torus, $m\in\N$. The torsional rigidity of an open
set $\Omega \subset \T^m$ is the integral with respect to Lebesgue measure over all
starting points $x\in\Omega$ of the expected lifetime in $\Omega$ of a Brownian motion
starting at $x$. In this paper we consider $\Omega = \T^m \backslash \beta[0,t]$, the
complement of the path $\beta[0,t]$ of an independent Brownian motion up to time $t$.
We compute the leading order asymptotic behaviour of the expectation of the torsional
rigidity in the limit as $t \to \infty$. For $m=2$ the main contribution comes from the
components in $\T^2 \backslash \beta [0,t]$ whose inradius is comparable to the largest
inradius, while for $m=3$ most of $\T^3 \backslash \beta [0,t]$ contributes. A similar
result holds for $m \geq 4$ after the Brownian path is replaced by a shrinking Wiener
sausage $W_{r(t)}[0,t]$ of radius $r(t)=o(t^{-1/(m-2)})$, provided the shrinking is slow
enough to ensure that the torsional rigidity tends to zero. Asymptotic properties of the
capacity of $\beta[0,t]$ in $\R^3$ and $W_1[0,t]$ in $\R^m$, $m \geq 4$, play a central
role throughout the paper. Our results contribute to a better understanding of the geometry
of the complement of Brownian motion on $\T^m$, which has received a lot of attention in
the literature in past years.

\vskip 0.5truecm
\noindent
{\it AMS} 2000 {\it subject classifications.} 35J20, 60G50.\\
{\it Key words and phrases.} Torus, Laplacian, Brownian motion, torsional rigidity,
inradius, capacity, spectrum, heat kernel.

\medskip\noindent
{\it Acknowledgment.} The authors acknowledge support by The Leverhulme Trust
through International Network Grant \emph{Laplacians, Random Walks, Bose Gas,
Quantum Spin Systems}. EB is supported by SNSF-grant 20-100536/1. FdH is
supported by ERC Advanced Grant 267356-VARIS and NWO Gravitation Grant
024.002.003-NETWORKS. The authors thank Greg Lawler for helpful discussions
about capacity of the Wiener sausage.

\end{abstract}


\section{Background, main results and discussion}
\label{results}

Section~\ref{background} provides our motivation for looking at torsional rigidity, and
points to the relevant literature. Section~\ref{torsrig} introduces our main object of
interest, the torsional rigidity of the complement of Brownian motion on the unit torus.
Section~\ref{asympscal} states our main theorems. Section~\ref{disc} places these
theorems in their proper context and makes a link with the principal Dirichlet eigenvalue
of the complement. Section~\ref{sketch} gives a brief sketch of the main ingredients of
the proofs and provides an outline of the rest of the paper.


\subsection{Background on torsional rigidity}
\label{background}

Let $(M,g)$ be a geodesically complete, smooth $m$-dimensional Riemannian manifold
without boundary, and let $\Delta$ be the Laplace-Beltrami operator acting in $L^2(M)$. We will in addition assume that $M$ is stochastically complete. That is, Brownian motion on $M$, denoted by
$(\tilde\beta(s),s \geq 0; \tilde\P_x, x\in M)$, with generator $\Delta$ exists for all positive time. The latter is guaranteed if for example the Ricci curvature on $M$ is bounded from below. See \cite{AG} for further details.
For an open, bounded subset  $\Omega\subset M$, and $x\in \Omega$ we define the first exit time of Brownian motion by
\begin{equation}
\label{e1}
\tilde\tau_{\Omega}=\inf\{s \geq 0\colon\, \tilde\beta(s)\notin \Omega\}.
\end{equation}
It is well known that
\begin{equation}\label{e3a}
u_{\Omega}(x;t)=\tilde\P_x[\tilde\tau_{\Omega}>t]
\end{equation}
is the unique solution of
\begin{equation*}
\frac{\partial u}{\partial t} = \Delta u,\, u(\,\cdot\,;t)\in H_0^1(\O),\,t>0,
\end{equation*}
with initial condition  $u(x;0)=1$.
The requirement $u(\,\cdot\,;t)\in H_0^1(\O),\, t>0,$ represents the Dirichlet boundary condition.
If we denote the expected lifetime of Brownian motion in $\Omega$ by
\begin{equation}
\label{e2}
v_{\Omega}(x) = \tilde\E_x[\tilde\tau_{\Omega}], \qquad x\in \Omega,
\end{equation}
where $\tilde\E_x$ denotes expectation with respect to $\tilde\P_x$, then
\begin{equation}\label{e3b}
v_{\Omega}(x)=\int_0^{\infty}dt\,u_{\Omega}(x;t).
\end{equation}
It is straightforward to verify that
$v_{\Omega}$, the \emph{torsion function} for $\Omega$, is the unique solution of
\begin{equation}
\label{e3}
-\Delta v=1,\, v\in H_0^1(\Omega).
\end{equation}
The \emph{torsional rigidity} of $\Omega$ is the set function defined by
\begin{equation}
\label{e3alt}
\cT(\Omega)=\int_{\Omega} dx\,v_{\Omega}(x).
\end{equation}
The torsional rigidity of a cross section of a cylindrical beam found its origin in the computation of
the angular change when a beam of a given length and a given modulus of rigidity is exposed
to a twisting moment. See for example \cite{TG}.

From a mathematical point of view both the torsion function $v_{\Omega}$ and the torsional rigidity $\cT(\Omega)$ have
been studied by analysts and probabilists. Below we just list a few key results. In analysis, the torsion function is an essential ingredient for the study of gamma-convergence of sequences of sets. See chapter 4 in \cite{BB}. Several isoperimetric inequalities have been obtained for the
torsional rigidity when $M=\R^m$. If $\Omega \subset \R^m$ has finite Lebesgue measure $|\Omega|$, and
$\Omega^*$ is the ball with the same Lebesgue measure, centred at $0$, then $\cT(\Omega)
\leq \cT(\Omega^*)$. The following stability result for torsional rigidity was obtained in
\cite{BP}:
\begin{equation*}
\frac{\cT(\Omega^*)-\cT(\Omega)}{\cT(\Omega^*)} \geq C_m \cA(\Omega)^3.
\end{equation*}
Here, $\cA(\Omega)$ is the Fraenkel asymmetry of $\Omega$, and $C_m$ is an $m$-dependent
constant. The Kohler-Jobin isoperimetric inequality \cite{KJ1},\cite{KJ2} states
that
\begin{equation*}
\lambda_1(\Omega)^{(m+2)/2}\,\cT(\Omega) \geq
\lambda_1(\Omega^*)^{(m+2)/2}\,\cT(\Omega^*).
\end{equation*}
Stability results have also been obtained for the Kohler-Jobin inequality \cite{BP}. A
classical isoperimetric inequality \cite{T} states that
\begin{equation*}
\|v_{\Omega} \|_{L^{\infty}(\Omega)} \leq v_{\Omega^*}(0).
\end{equation*}

In probability, the first exit time moments of Brownian motion have been studied in for example \cite{BvdBC} and \cite{McD}.
These moments are Riemannian invariants, and the $L^1$-norm of the first moment is the
torsional rigidity.

The \emph{heat content} of $\Omega$ at time $t$ is defined as
\begin{equation}
\label{a5}
H_\O(t) = \int_\O u_\O(x;t)\,dx.
\end{equation}
This quantity represents the amount of heat in $\O$ at time $t$, if $\O$ is at initial temperature $1$, while the boundary of $\O$ is at temperature $0$ for all $t>0$.
By \eqref{e3a}, $0\le u_{\O}\le 1$, and so
\begin{equation*}
0 \leq H_\O(t) \le |\O|.
\end{equation*}
Finally by \eqref{e3b}, \eqref{e3alt} and \eqref{a5} we have that
\begin{equation} \label{a15}
\cT(\O) = \int_0^\infty H_{\O}(t)\,dt,
\end{equation}
i.e., \emph{the torsional rigidity is the integral of the heat content}.

\subsection{Torsional rigidity of the complement of Brownian motion}
\label{torsrig}

In the present paper we consider the flat unit torus $\T^m$. Let $(\beta(s),s \geq 0; \P_x,
x\in \T^m)$ be a second independent Brownian motion on $\T^m$. Our object of interest
is the random set (see Fig.~\ref{BM2})
\begin{equation*}
\cB(t) = \T^m \backslash \beta[0,t].
\end{equation*}
In particular, we are interested in the \emph{expected torsional rigidity of} $\cB(t)$:
\begin{equation}
\label{e4}
\spadesuit(t) = \E_0\big(\cT\big(\cB(t)\big)\big), \qquad t \geq 0.
\end{equation}
Since $|\T^m|=1$ and $|\beta[0,t]|=0$, the torsional rigidity is the expected time needed by
the first Brownian motion $\tilde\beta$ to hit $\beta[0,t]$ averaged over all starting points in
$\T^m$. As $t\to\infty$, $\beta[0,t]$ tends to fill $\T^m$. Hence we expect that $\lim_{t\to\infty}
\spadesuit(t) = 0$. The results in this paper identify the speed of convergence. This speed
provides information on the random geometry of $\cB(t)$. In earlier work \cite{vdBBdH} we
considered the inradius of $\cB(t)$.

\begin{figure}[htbp]
\vspace{0.5cm}
\begin{center}
\includegraphics[width=0.35\linewidth]{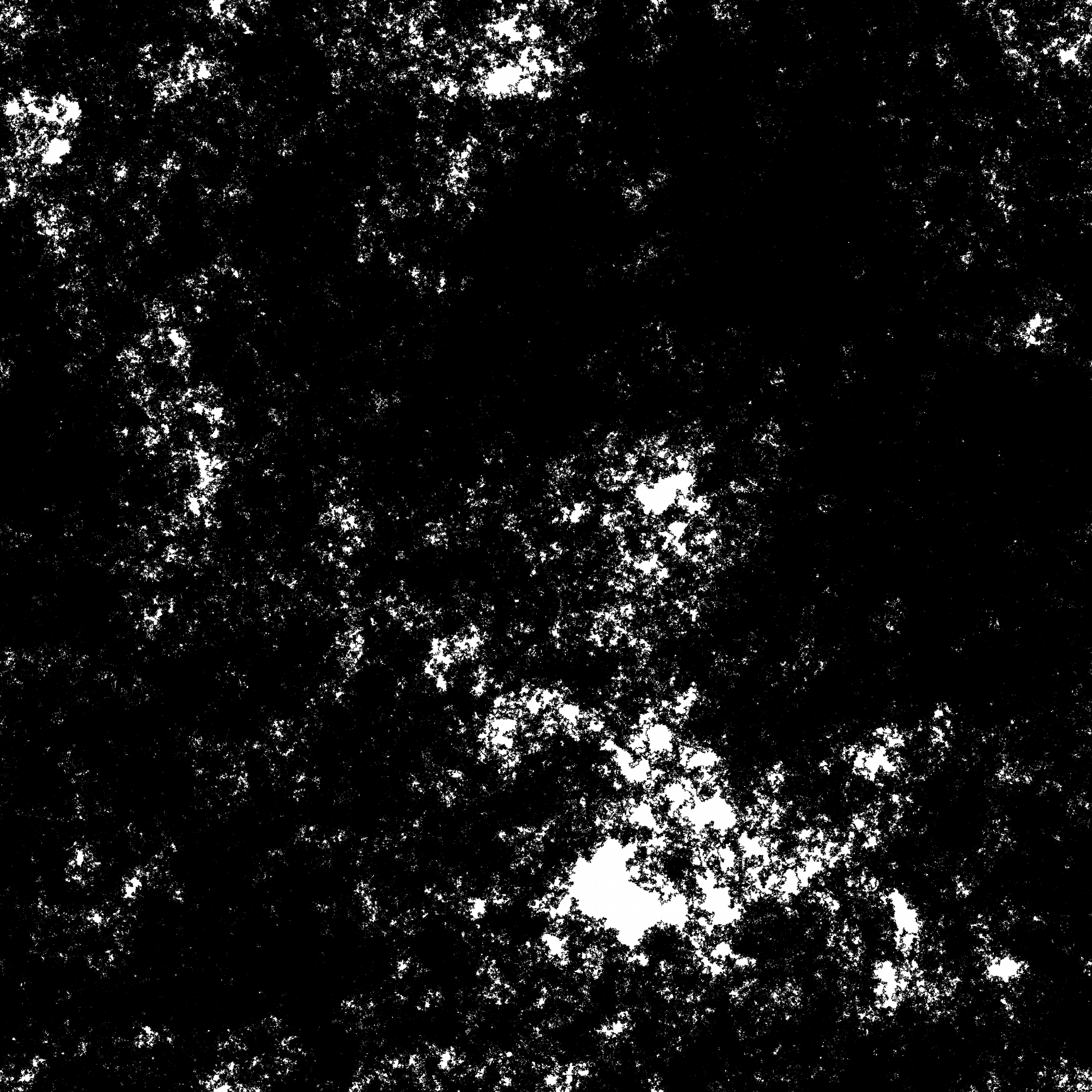}
\end{center}
\label{BM2}
\caption{Simulation of $\beta[0,t]$ for $t=15$ and $m=2$. The Brownian path $\beta[0,t]$
is black, its complement $\cB(t)=\T^m\backslash\beta[0,t]$ is white.}
\end{figure}

The case $m=1$ is uninteresting. For $m=2$, as $t$ gets large the set $\cB(t)$
decomposes into a large number of disjoint small components (see Fig.~\ref{BM2}),
while for $m \geq 3$ it remains connected. As shown in \cite{GdH}, in the latter
case $\cB(t)$ consists of ``lakes'' connected by ``narrow channels'', so that we may
think of it as a \emph{porous medium}. Below we identify the asymptotic behaviour
of $\spadesuit(t)$ as $t\to\infty$ when $m=2,3$.

For $m \geq 4$ we have $\spadesuit(t)=\infty$ for all $t \geq 0$ because Brownian
motion is polar. To get a non-trivial scaling, the Brownian path must be thickened to
a \emph{shrinking Wiener sausage}
\begin{equation}
\label{e12}
W_{r(t)}[0,t]
= \big\{x\in\T^m\colon\,d_t(x) \leq r(t)\big\},  \qquad t>0,
\end{equation}
where $r\colon\,(0,\infty) \to (0,\infty)$ is such that $\lim_{t\to\infty} t^{1/(m-2)} r(t)=0$.
This choice of shrinking is appropriate because for $m \geq 3$ typical regions in $\cB(t)$
have a size of order $t^{-1/(m-2)}$ (see \cite{DPR} and \cite{GdH}). The object of interest
is the random set
\begin{equation*}
\cB_{r(t)}(t) = \T^m \backslash W_{r(t)}[0,t],
\end{equation*}
in particular, the \emph{expected torsional rigidity of} $\cB_{r(t)}(t)$:
\begin{equation*}
\spadesuit_{r(t)}(t) = \E_0\big(\cT\big(\cB_{r(t)}(t)\big)\big), \qquad t > 0.
\end{equation*}
Below we identify the asymptotic behaviour of $\spadesuit_{r(t)}(t)$ as $t\to\infty$ for
$m \geq 4$ subject to a condition under which $r(t)$ does not decay too fast.


\subsection{Asymptotic scaling of expected torsional rigidity}
\label{asympscal}

Theorems~\ref{the1}--\ref{the3} below are our main results for the scaling of $\spadesuit(t)$
and $\spadesuit_{r(t)}(t)$ as $t\to\infty$. In what follows we write $f \asymp g$ when
$0 < c \leq f(t)/g(t) \leq C < \infty$ for $t$ large enough.

\begin{theorem}
\label{the1}
If $m=2$, then
\begin{equation}
\label{e5}
\spadesuit(t) \asymp t^{1/4}\,e^{-4(\pi t)^{1/2}}, \qquad  t\to\infty.
\end{equation}
\end{theorem}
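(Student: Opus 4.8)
The plan is to exploit the identity $\cT(\O)=\int_0^\infty H_\O(t)\,dt$ from \eqref{a15} together with a decomposition of $\cB(t)=\T^2\setminus\beta[0,t]$ into its connected components, sorted by inradius. For $m=2$ the complement splits into countably many small open sets; a single component $U$ of inradius $\rho$ contributes to $\cT(U)$ an amount comparable to $\rho^4$ (by Brownian scaling: $\cT(\rho U_0)=\rho^4\cT(U_0)$, and on the torus a component of inradius $\rho$ is, up to bounded distortion, a planar domain squeezed between the inscribed disc of radius $\rho$ and some circumscribed set of comparable size). Hence $\cT(\cB(t))\asymp\sum_i \rho_i(t)^4$ where $\rho_i(t)$ are the inradii of the components, and the sum is dominated by the handful of components whose inradius is close to the maximal inradius $\rho_{\max}(t)$. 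So the problem reduces to controlling $\E_0\big(\sum_i\rho_i(t)^4\big)$.

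The next step is to import the inradius asymptotics from \cite{vdBBdH}: for $m=2$, $\rho_{\max}(t)$ concentrates around a deterministic value $\rho_*(t)$ with $\pi\rho_*(t)^2 \sim$ (a constant)$\times \log$-type correction, and more precisely the relevant large-deviation rate for a component of inradius $\rho$ to survive up to time $t$ is governed by the principal Dirichlet eigenvalue $\lambda_1(\text{disc of radius }\rho)=j_0^2/\rho^2$, so that the probability that $\beta[0,t]$ avoids a fixed disc of radius $\rho$ decays like $e^{-\lambda_1 t}$ up to polynomial corrections. Optimising $\rho^4 e^{-j_0^2 t/\rho^2}$ — or rather the correct torus analogue, where avoiding a disc of radius $\rho$ costs $e^{-t/\text{(something)}}$ — should produce the stretched exponential $e^{-4(\pi t)^{1/2}}$ with the polynomial prefactor $t^{1/4}$. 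I would carry this out by: (i) an upper bound, summing over all possible locations and radii using a union bound together with the heat-content bound $H_U(s)\le |U|\wedge(\text{eigenvalue decay})$ and the known probability that a ball of radius $\rho$ is unhit; (ii) a matching lower bound, exhibiting with sufficient probability at least one component of inradius $\asymp\rho_*(t)$ and lower-bounding its torsional rigidity by that of its inscribed disc via domain monotonicity of $\cT$.

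The main obstacle I anticipate is the lower bound and, within it, the passage from "inradius $\rho$" to "torsional rigidity $\asymp\rho^4$" with matching constants in the exponent. Domain monotonicity gives $\cT(\cB(t))\ge\cT(D_{\rho_{\max}})=c\,\rho_{\max}^4$ cheaply, but to get the sharp exponent $4(\pi t)^{1/2}$ one must know that $\rho_{\max}(t)$ is as large as the heuristic predicts with not-too-small probability, which requires the delicate second-moment / coarse-graining estimates for the unhit discs on $\T^2$ — essentially re-deriving or quoting the sharp part of \cite{vdBBdH}. For the upper bound the subtlety is controlling the total contribution of the many mid-sized components: one needs that $\E_0\big(\sum_i\rho_i^4\big)$ is not much larger than $\E_0\big(\rho_{\max}^4\big)$, i.e. that the number of components with inradius near the maximum is at most polynomial in $t$ in expectation, which again rests on the quantitative control of the unhit set. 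I would structure the final proof so that these two inputs are isolated as lemmas about $\beta[0,t]$ on $\T^2$ (probability of an unhit disc of given radius; expected number of disjoint unhit discs of given radius), after which \eqref{e5} follows by the optimisation sketched above.
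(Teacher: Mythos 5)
Your overall strategy --- decompose $\cB(t)$ into components, convert torsional rigidity into a size functional of the components, and optimise ``size to the fourth power'' against the probability of the path avoiding a region of that size --- is the right heuristic, and it is indeed how the paper proceeds. But two of your load-bearing steps are wrong or missing. First, the equivalence $\cT(U)\asymp\rho(U)^4$ for a component $U$ of inradius $\rho(U)$ is false: a long thin component (say comparable to $[0,L]\times[0,\epsilon]$ with $L\gg\epsilon$) has $\cT\asymp L\epsilon^3\gg\epsilon^4$, and nothing forces the components of $\T^2\setminus\beta[0,t]$ to be round. The correct substitute, which is what the paper uses, is the two-sided Hardy-inequality bound $\tfrac14\int_U\delta_U^2\le\cT(U)\le16\int_U\delta_U^2$ for simply connected planar $U$ (Lemma~\ref{lemlb}(d),(e)); summing over components this reduces the problem to $\E_0\bigl(\int_{\T^2}d_t(x)^2\,dx\bigr)$, which correctly weights elongated components by their area times squared inradius rather than by $\rho^4$. (Components of diameter $>\tfrac12$, which cannot be embedded in $\R^2$, are disposed of separately by showing they occur with probability $e^{-pt}$.)

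Second, and more seriously, your avoidance rate is the wrong one, and you leave the correct one as a placeholder ``(something)''. The probability that $\beta[0,t]$ avoids a disc of radius $\rho$ in $\T^2$ is governed by the principal Dirichlet eigenvalue of the \emph{complement} $\T^2\setminus D_\rho$, not of the disc itself: by Ozawa's perturbation formula, $\lambda_1(\T^2\setminus D_\rho)=\frac{2\pi}{\log(1/\rho)}+O\bigl(\log(1/\rho)^{-2}\bigr)$, so the avoidance probability is $e^{-2\pi t/\log(1/\rho)}$ up to controlled corrections --- a logarithmic-capacity rate, not $e^{-j_0^2t/\rho^2}$. With your rate the optimisation is degenerate (both $\rho^4$ and $e^{-j_0^2t/\rho^2}$ are increasing in $\rho$, so there is no saddle); with the correct rate one minimises $2\log(1/\rho)+2\pi t/\log(1/\rho)$, giving $\log(1/\rho^*)=(\pi t)^{1/2}$ and the exponent $4(\pi t)^{1/2}$, and the $t^{1/4}$ prefactor then comes out of the Laplace/Bessel-function evaluation of the sum over dyadic scales (which also requires the $O(k^{-2})$ error control in Ozawa's formula). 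Finally, you overestimate the difficulty of the probabilistic input: since the theorem concerns the \emph{expected} torsional rigidity, linearity of expectation reduces both bounds to first-moment computations --- the paper partitions $\T^2$ dyadically into $k$-squares and writes $\E\bigl(\int_{\T^2}d_t(x)^2\,dx\bigr)\asymp\sum_k2^{-2k}p_k(t)$ with $p_k(t)$ the probability of avoiding one $k$-square, estimated two-sidedly by the eigenfunction expansion --- so no second-moment or coarse-graining argument from \cite{vdBBdH} is needed. Such arguments would only become necessary to upgrade the result from $\asymp$ in expectation to tightness of $\cT(\cB(t))$ itself.
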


\begin{theorem}
\label{the2}
If $m=3$, then
\begin{equation}
\label{e6}
\spadesuit(t) = [1+o(1)]\, \frac{2}{t^2}\,\E_0\left(\frac{1}{\cp(\beta[0,1])^2}\right),
\qquad t\to\infty,
\end{equation}
where $\cp(\beta[0,1])$ is the Newtonian capacity of $\beta[0,1]$ in $\R^3$.
All inverse moments of $\cp(\beta[0,1])$ are finite.
\end{theorem}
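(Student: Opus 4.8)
The plan is to prove Theorem~\ref{the2} by splitting the expected torsional rigidity according to which component of $\cB(t)$ the starting point $x$ lies in, and showing that the dominant contribution comes from the single ``principal'' component whose torsion function is governed, after rescaling, by the inverse square of the Newtonian capacity of a Brownian path.

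First I would exploit the Brownian scaling that takes $\beta[0,t]$ on $\T^m$ to $\beta[0,1]$ on the rescaled torus $t^{1/2}\T^m$: by \eqref{a15} and a change of variables, $\cT(\cB(t))$ rescales by an explicit power of $t$ (in $m=3$ this is $t^{2}$ after dividing by the space factor, which is why $2/t^2$ appears). On the rescaled torus the Brownian path $\beta[0,1]$ looks, at the scale of its local neighbourhood, like a Brownian path in all of $\R^3$. The key heuristic is that for $m=3$ the torsion function $v_{\cB(t)}(x)$, for $x$ far from the path, is essentially the expected hitting time of $\beta[0,t]$ for a Brownian motion started at $x$, and by the connection between hitting times and Newtonian potential this equilibrium-type quantity is to leading order $1/\cp(\beta[0,t])$ (capacity computed in the ambient space $\R^3$ after rescaling). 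Since $|\T^3|=1$, integrating $v_{\cB(t)}$ over space contributes one factor of $1/\cp$, and the expected lifetime itself contributes another, which explains the $\cp(\cdot)^{-2}$ in \eqref{e6}. Making this precise requires: (i) an upper bound showing the torsion function cannot be much larger than $\mathrm{const}/\cp(\beta[0,t])$ away from the path, using the maximum principle and comparison with the Newtonian potential; (ii) a matching lower bound on a set of $x$ of full measure $1-o(1)$, using that typical $x$ is at distance $\asymp 1$ (on the original torus) from the path and that the path is ``spread out'' enough that the Green-function/potential estimate is sharp; (iii) control of the region near the path, where $v_{\cB(t)}$ is small and the set of such $x$ has measure $o(1)$. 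Then one passes to the expectation via dominated convergence, using the rescaled weak convergence of the relevant capacity to $\cp(\beta[0,1])$.

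The second half of the statement, that all inverse moments of $\cp(\beta[0,1])$ are finite, I would prove by showing that $\cp(\beta[0,1])$ is bounded below by a quantity with sufficiently thin left tail. The natural route is: $\cp(\beta[0,1]) \geq c\,\cp(\beta[0,\delta])$ for any fixed $\delta$, and then use a subadditivity/iteration argument over the $1/\delta$ increments, or more robustly bound $\cp(\beta[0,1])$ below by the capacity of a small ball that $\beta$ is forced to cover, combined with the fact that $\cp(A) \geq c\,(\mathrm{diam}\,A)$ in $\R^3$. Since $\mathrm{diam}\,\beta[0,1]$ has Gaussian-type upper and lower deviations (the lower deviation being the statement that Brownian motion does not stay in a tiny ball, which has probability $\exp(-c/\eps^2)$ of remaining in a ball of radius $\eps$), we get $\P(\cp(\beta[0,1]) < \eps) \leq \exp(-c/\eps^2)$, which kills all negative moments.

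The main obstacle will be step (i)--(ii): establishing that the torsion function on $\cB(t)$ is, up to $1+o(1)$, equal to the reciprocal of the capacity of the rescaled path uniformly over a set of starting points of full measure. This is delicate because $\cB(t)$ is not simply the complement of a nice compact set in $\R^3$ but lives on the torus, so one must control the interplay between the ``wrapping'' of the path around $\T^3$ and the Green function of the torus versus that of $\R^3$; in particular one needs that the path, after rescaling to $t^{1/2}\T^3$, is with high probability contained in a ball of radius $o(t^{1/2})$ so that ambient $\R^3$ estimates apply, yet is spread over a region large enough that its capacity concentrates. I expect this to require the capacity estimates for Brownian paths that the abstract advertises as ``playing a central role,'' together with a careful second-moment or concentration argument for $\cp(\beta[0,t])$ and a quantitative form of the hitting-time/potential-theory identity. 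Controlling the contribution of atypical realizations of $\beta[0,t]$ (e.g.\ where the path is unusually short, making the complement unusually large) is where the finiteness of inverse moments of $\cp$ gets used to close the estimate.
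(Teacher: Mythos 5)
Your proposal does not follow the paper's route, and both halves contain genuine gaps rather than merely a different workable strategy.

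For the main asymptotics, the geometric picture is wrong. On $\T^3$ the path $\beta[0,t]$ wraps around the torus many times and its $\eps$-neighbourhood covers $\T^3$ for large $t$ (the inradius of $\cB(t)$ is only of order $(\log t/t)$ by \eqref{e9}); after your rescaling to $t^{1/2}\T^3$ the path is spread over the \emph{whole} large torus, so it is emphatically not ``contained in a ball of radius $o(t^{1/2})$'', and the ambient-$\R^3$ hitting-time/capacity comparison you want to invoke is unavailable. Your accounting for the exponent $-2$ is also inconsistent: if $v_{\cB(t)}\approx 1/\cp$ on a set of full measure, then $\int v_{\cB(t)}\approx 1/\cp$ --- one factor, not two. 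The actual mechanism is different. One writes $\spadesuit(t)=\int_0^\infty ds\,(\P\otimes\tilde\P)\big(\beta[0,t]\cap\tilde\beta[0,s]=\emptyset\big)$, shows that only $s=O(\eta(t))$ with $\eta(t)\to0$ contributes, and estimates the integrand by decomposing $\beta[0,t]$ into $\asymp t/(e_\delta+e'_\delta)$ excursions between two concentric spheres around $\tilde\beta(0)$, each of which hits the \emph{tiny} set $\tilde\beta[0,s]$ with probability $\approx(\kappa_3\delta)^{-1}\cp(\tilde\beta[0,s])$. Since $\cp(\tilde\beta[0,s])$ equals $\sqrt{s}\,\cp(\tilde\beta[0,1])$ in distribution, the integrand is $\approx\exp[-t\sqrt{s}\,\cp(\tilde\beta[0,1])]$, and $\int_0^\infty e^{-c\sqrt{s}}\,ds=2/c^2$ produces both the constant $2$ and the square. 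In particular the random variable $\cp(\beta[0,1])$ in \eqref{e6} is the capacity of the unit-time rescaled \emph{initial segment of the exploring motion} $\tilde\beta$, not of (any rescaling of) the long path $\beta[0,t]$; your proposal attributes the surviving randomness to the wrong object, and the roles of ``small set to be hit'' and ``long path doing the hitting'' are reversed relative to what actually happens.

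For the inverse moments, the inequality $\cp(A)\geq c\,\mathrm{diam}(A)$ is false for Newtonian capacity in $\R^3$: a line segment has positive diameter but zero capacity, being polar for three-dimensional Brownian motion, so no lower bound through the diameter (nor through ``a small ball that $\beta$ is forced to cover'' --- the path covers no ball) can work. The paper instead bounds $1/\cp(\beta[0,1])$ from above by the energy of the occupation measure, $\frac{1}{4\pi}\int_0^1\int_0^1|\beta(u)-\beta(v)|^{-1}\,du\,dv$, and shows via a Khas'minskii-type iteration that this quantity has a finite exponential moment (Lemma~\ref{lemcapexp}). That exponential moment is not a cosmetic addendum: it is used quantitatively in Step 1 of the proof to show that the contribution from $s\geq\eta(t)$ is $o(t^{-2})$, so some version of it is unavoidable.
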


\begin{theorem}
\label{the3}
If $m \geq 4$ and
\begin{equation}
\label{epscond}
\lim_{t\to\infty} t^{1/(m-2)} r(t)=0, \qquad
\left\{\begin{array}{ll}
m=4\colon &\lim\limits_{t\to\infty}  \frac{t}{\log^3 t}\,\frac{1}{\log(1/r(t))} = \infty,\\[0.2cm]
m\geq 5\colon  &\lim\limits_{t\to\infty} \frac{t}{\log^3 t}\,r(t)^{m-4} = \infty,
\end{array}
\right.
\end{equation}
then
\begin{equation}
\label{e13}
\spadesuit_{r(t)}(t) = [1+o(1)]\, \frac{1}{\kappa_m\,t^{2/(m-2)}}\,
\E_0\left(\frac{1}{\cp(W_{\eps(t)}[0,1])}\right), \qquad t \to \infty,
\end{equation}
where $\eps(t) = t^{1/(m-2)} r(t)$, $\cp(W_\eps[0,1])$ is the Newtonian capacity of
$W_\eps[0,1]$ in $\R^m$, and where $\kappa_m$ is the Newtonian capacity of the ball with radius $1$ in $\R^m$,
\begin{equation}
\label{e10}
\kappa_m = 4\pi^{m/2} \Big/\,\Gamma\left(\frac{m-2}{2}\right).
\end{equation}
All inverse moments of $\cp(W_\eps[0,1])$ are finite for all
$\eps>0$.
\end{theorem}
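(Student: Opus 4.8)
The plan is to reduce the problem, via the heat-content representation \eqref{a15}, to a study of the principal eigenvalue of the shrinking Wiener sausage complement, and then to match that eigenvalue to the capacity of a rescaled sausage. First I would write $\spadesuit_{r(t)}(t) = \E_0\big(\int_0^\infty H_{\cB_{r(t)}(t)}(s)\,ds\big)$ and split the time integral at a threshold $s_0 = s_0(t)$ chosen so that, for $s \geq s_0$, the heat content is governed by the bottom of the Dirichlet spectrum: $H_{\cB_{r(t)}(t)}(s) = [1+o(1)]\,e^{-s\lambda_1(\cB_{r(t)}(t))}\,\big(\int \phi_1\big)^2$, where $\lambda_1$ is the principal Dirichlet eigenvalue of $\cB_{r(t)}(t)$ and $\phi_1$ its $L^2$-normalised ground state. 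The contribution from $[0,s_0]$ must be shown to be negligible compared to the right-hand side of \eqref{e13}; the contribution from $[s_0,\infty)$ gives $[1+o(1)]\,\lambda_1(\cB_{r(t)}(t))^{-1}\,\big(\int\phi_1\big)^2$ after integrating the exponential, and one expects $\int\phi_1 \to 1$ since the sausage has vanishing volume. So the heart of the matter is the sharp asymptotics of $\lambda_1(\cB_{r(t)}(t))$ in expectation.

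The key step is then to prove that, after Brownian scaling $x \mapsto t^{-1/(m-2)}\cdot$ in the relevant sense, the eigenvalue $\lambda_1(\cB_{r(t)}(t))$ behaves like a capacity. Concretely, the complement of a small sausage in the torus has $\lambda_1 \approx \kappa_m$-type behaviour: by the classical Kac-type formula for the principal eigenvalue of the complement of a small obstacle, $\lambda_1(\cB_{r(t)}(t)) = [1+o(1)]\,\mathrm{cap}_{\T^m}(W_{r(t)}[0,t])$, where $\mathrm{cap}_{\T^m}$ is the torus capacity; and since $W_{r(t)}[0,t]$ has diameter much smaller than $1$, the torus capacity is asymptotic to the Newtonian capacity in $\R^m$. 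Rescaling by $t^{1/(m-2)}$ turns $W_{r(t)}[0,t]$ into a set whose capacity equals $t^{-1}\,\mathrm{cap}\big(W_{\eps(t)}[0,1]\big)$ by the scaling property $\mathrm{cap}(\lambda A) = \lambda^{m-2}\,\mathrm{cap}(A)$ together with Brownian scaling of the path. Dividing by $\kappa_m$ accounts for the normalisation in \eqref{e10}, giving $\lambda_1(\cB_{r(t)}(t)) = [1+o(1)]\,\kappa_m\,t^{-2/(m-2)}\,\mathrm{cap}(W_{\eps(t)}[0,1])^{-1}$... no — rather $\lambda_1 \asymp t^{-1}\mathrm{cap}(W_{\eps(t)}[0,1])$, and one inverts to get $\lambda_1^{-1} = [1+o(1)]\,\kappa_m^{-1}\,t^{2/(m-2)}\,\mathrm{cap}(W_{\eps(t)}[0,1])^{-1}$ after tracking constants. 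Taking $\E_0$ and invoking the finiteness of all inverse moments of $\mathrm{cap}(W_\eps[0,1])$ (proved separately, e.g.\ via a Borell--TIS or concentration argument giving exponential tails for $\mathrm{cap}(W_\eps[0,1])^{-1}$, using that the capacity is bounded below by the capacity of a single small ball on the path) yields \eqref{e13}.

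The role of condition \eqref{epscond} is exactly to make the two error sources controllable: the first constraint $t^{1/(m-2)}r(t)\to 0$ guarantees $\eps(t)\to 0$ so the sausage-complement is genuinely in the small-obstacle regime on the rescaled torus; the second constraint (the $t/\log^3 t$ conditions) is a quantitative lower bound ensuring that $\lambda_1(\cB_{r(t)}(t))$ is large enough — of order at least $\log^3 t\,/\,t$ up to the capacity factor — so that $s_0(t)$ can be taken with $s_0\lambda_1 \to \infty$ while the short-time contribution $\int_0^{s_0} H\,ds \leq s_0$ stays $o(\kappa_m^{-1}t^{2/(m-2)}\,\E_0(\mathrm{cap}^{-1}))$. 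This is why $\log^3 t$ appears: one needs a polylogarithmic safety margin both to dominate the polynomially-many components' fluctuations (a union bound over $\asymp t^{m/2}$ relevant components, each contributing a $\log t$ factor) and to separate $s_0$ from the eigenvalue scale.

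The main obstacle I anticipate is the rigorous justification of the eigenvalue-to-capacity asymptotics \emph{uniformly} enough that it survives taking expectations, including the spectral-gap control needed to pass from heat content to a single exponential. Unlike a fixed small obstacle, here the obstacle is a random path of macroscopic length whose complement has many nearly-degenerate low eigenvalues (the many ``lakes'' of the porous medium picture); one must show the ground state concentrates in the single largest lake and that the sub-leading eigenvalues, while close, are separated by enough that their combined heat-content contribution is lower order. This will require a careful second-moment or decoupling estimate for the sum over lakes of their individual torsion contributions, controlled via capacity estimates for the Wiener sausage that are presumably the technical core referenced in the abstract. Handling the logarithmic corrections in the $m=4$ case (where $\mathrm{cap}$ of a small set degenerates logarithmically rather than polynomially) will need extra care, which is presumably why $m=4$ gets its own clause in \eqref{epscond}.
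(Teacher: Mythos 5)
There is a genuine gap, and it is structural rather than technical: the reduction of $\spadesuit_{r(t)}(t)$ to $\E_0\big(\lambda_1(\cB_{r(t)}(t))^{-1}(\int\phi_1)^2\big)$ does not capture the leading order. For $m\geq 3$ the complement is a porous medium whose torsional rigidity is carried by the \emph{bulk} — the typical regions of size $t^{-1/(m-2)}$ — and not by the ground state, which concentrates in the atypically large lake of inradius $\rho_t \asymp (\log t/t)^{1/(m-2)}$ (see the discussion following \eqref{e9}). Consequently $\lambda_1^{-1}\asymp \rho_t^2$ carries spurious powers of $\log t$ relative to the true answer $\asymp t^{-2/(m-2)}$, while the weight $(\int\phi_1)^2$ of the ground state tends to $0$ like the volume of that single lake; the one-mode approximation is therefore wrong in both directions, and no spectral-gap argument can repair it because the low-lying spectrum is an essentially continuous accumulation of comparable lake modes whose \emph{sum} is what produces the answer. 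A second, independent problem is the invocation of the Kac small-obstacle asymptotics $\lambda_1(\T^m\setminus K)\sim\cp(K)$: this requires $K$ to be contained in a ball of vanishing diameter, whereas $W_{r(t)}[0,t]$ has diameter of order $1$ for large $t$ (it nearly covers the torus), so the step "the torus capacity is asymptotic to the Newtonian capacity since the sausage has small diameter" is false. Your own arithmetic betrays the mismatch: $t^{-1}\cp(W_{\eps(t)}[0,1])$ inverts to $t\,\cp(W_{\eps(t)}[0,1])^{-1}$, not to the $t^{2/(m-2)}$ prefactor of \eqref{e13}.

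The paper's proof goes through the dual representation $\spadesuit_{r(t)}(t)=\int_0^\infty \big(\P\otimes\tilde\P\big)\big(\beta[0,t]\cap \tilde W_{r(t)}[0,s]=\emptyset\big)\,ds$, in which the roles are reversed: the relevant capacity is that of the \emph{small} set $\tilde W_{r(t)}[0,s]$ for $s\leq\eta(t)=o(1/\log t)$, hit or avoided by the long path $\beta[0,t]$. The avoidance probability is evaluated by decomposing $\beta[0,t]$ into $\asymp t$ excursions between two concentric spheres and estimating the per-excursion hitting probability by $(\kappa_m\delta^{m-2})^{-1}\cp(\tilde W_{r(t)}[0,s])$ (Lemmas~\ref{eigcap} and \ref{hitest}, renewal theory for the excursion count); integrating the resulting $\exp\{-[1+o(1)]\,t\,\cp(\tilde W_{r(t)}[0,s])\}$ over $s$ and using Brownian scaling is what produces both the $t^{-2/(m-2)}$ prefactor and the expectation of the \emph{inverse} capacity of the unit-time rescaled sausage $W_{\eps(t)}[0,1]$. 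The $\log^3 t$ condition in \eqref{epscond} is not about a union bound over components; it guarantees the existence of a cut-off $\eta(t)$ satisfying \eqref{etacondalt} (resp.\ \eqref{etacondalt*}), so that the tail $s\geq\eta(t)$ can be discarded using the uniform exponential moment of $1/\cC(t)$ from Lemma~\ref{lemcapest*}. That exponential-moment lemma is proved by an energy (Donsker--Varadhan) argument with the sojourn measure of the sausage as test measure, not by a concentration inequality. To salvage your write-up you would need to abandon the eigenvalue reduction and start from the survival-probability representation.
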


We expect that similar results hold when $\T^m$ is replaced by a smooth $m$-dimensional
compact connected Riemannian manifold without boundary. We further expect that the
torsional rigidity satisfies a strong law of large numbers for $m \geq 3$ but not for $m=2$.

A key ingredient in the proof of Theorem~\ref{the3} is the following scaling behaviour of
the capacity of the Wiener sausage for $m \geq 4$. Let
\begin{equation}
\label{defscalcap}
\cC(t) = \left\{\begin{array}{ll}
{\displaystyle\frac{\log t}{t}}\,\cp(W_1[0,t]), &m = 4,\\[0.2cm]
{\displaystyle\frac{1}{t}}\,\cp(W_1[0,t]), &m \geq 5.
\end{array}
\right.
\end{equation}
Then there exist constants $c_m \in (0,\infty)$, $m \geq 4$, such that
\begin{equation}
\label{capscalm4}
\cC(t) = [1+o(1)]\,c_m \quad \text{ in $\P_0$-probability as } t \to \infty.
\end{equation}
In Section~\ref{scalCAP} we prove \eqref{capscalm4} for $m \geq 5$ with the help of
subadditivity. For $m=4$, \eqref{capscalm4} is proven in \cite{ASS}.


\subsection{Discussion}
\label{disc}

We refer the reader to \cite{GdH} and \cite{BK} for an overview of what is known about
the geometry of the complement of Brownian motion on the unit torus.

\medskip\noindent
{\bf 1.}
Theorems~\ref{the1} and \ref{the2} identify the scaling of the expected torsional rigidity
in low dimensions.
 This scaling may be viewed in the following context. Let $d(x,y)$
denote the distance between $x,y \in \T^m$. The distance of $x$ to $\beta[0,t]$ is denoted by
\begin{equation}
\label{e7}
d_t(x) = \min_{y\in \beta[0,t]} d(x,y).
\end{equation}
The \emph{inradius} of $\cB(t)$ is the random variable $\rho_t$ defined by
\begin{equation*}
\rho_t = \max_{x\in\T^m} d_t(x).
\end{equation*}
A detailed analysis of $\rho_t$ and related quantities was given in \cite{DPRZ}, \cite{BK} for
$m=2$ and in \cite{DPR}, \cite{GdH} for $m \geq 3$. In \cite{vdBBdH} it was shown that for
$m=2$,
\begin{equation}
\label{e11}
\E_0(\rho_t) = e^{- (\pi t)^{1/2}[1+o(1)]}, \qquad t\to\infty,
\end{equation}
while for $m \geq 3$,
\begin{equation}
\label{e9}
\E_0(\rho_t) = [1+o(1)]\,\left(\frac{m}{(m-2)\kappa_m}\,\frac{\log t}{t}\right)^{1/(m-2)}, \qquad t\to\infty.
\end{equation}

A ball of radius $r$ in $\T^m$ with $r$ sufficiently small has a torsional rigidity proportional
to $r^{m+2}$. Theorem~\ref{the1} and \eqref{e11} show that $\log\spadesuit(t) = - [1+o(1)]\,
4(\pi t)^{1/2} = [1+o(1)]\, \log \E_0(\rho_t)^4$ for $m=2$, while Theorem~\ref{the2} and
\eqref{e9} show that $\spadesuit(t) \asymp t^{-2} \gg \E_0(\rho_t)^5$ for $m=3$. Thus, for
$m=2$ the main contribution to the asymptotic behaviour of $\log\spadesuit(t)$ comes from
the components in $\cB(t)$ that have a size of order $\rho_t$ (which are atypical; see \cite{DPRZ}
and \cite{BK}), while for $m=3$ the main contribution to the asymptotic behaviour of $\spadesuit(t)$
comes from regions in $\cB(t)$ that have a size of order $t^{-1}$ (which are typical; see \cite{DPR}
and \cite{GdH}), i.e., most of $\cB(t)$ contributes.

\medskip\noindent
{\bf 2.}
For $m=2$ it is shown in \cite{BK} that
\begin{equation}
\label{BKscal}
\rho_t = t^{-1/8+o(1)}\,e^{-(\pi t)^{1/2}} \quad \text{in $\P_0$-probability}, \qquad t \to \infty,
\end{equation}
which is a considerable sharpening of \eqref{e11}. The proof is long and difficult.
Combining \eqref{BKscal} with what we found in Theorem~\ref{the1}, we get the
relation
\begin{equation}
\label{scalconj}
\spadesuit(t) \asymp t^{3/4+o(1)}\,\E_0(\rho_t)^4,
\end{equation}
provided \eqref{BKscal} also holds in mean (which is expected but has not been proved).
Clearly, $\spadesuit(t)$ is not dominated by the largest component in $\cB(t)$ alone:
smaller components contribute too as long as they have a comparable size. The scaling in
\eqref{scalconj} suggests that the number of such components is of order $t^{3/4+o(1)}$.
In order to settle this issue, we would need to strengthen Theorem~\ref{the1} to tightness.

\medskip\noindent
{\bf 3.}
Theorem~\ref{the3} identifies the scaling of the expected torsional rigidity in high dimensions.
Via the scaling relation in distribution
\begin{equation}
\label{capscalapr}
\cp(W_\eps[0,1]) = \cp(\eps W_1[0, \eps^{-2}]) = \eps^{m-2}\cp(W_1[0, \eps^{-2}]),
\qquad \eps>0,
\end{equation}
it follows from \eqref{defscalcap}--\eqref{capscalm4} that $\cp(W_\eps[0,1])=[1+o(1)]\,c_m\eps^{m-4}$
in $\P_0$-probability as $\eps \downarrow 0$ when $m \geq 5$. In that case Theorem~\ref{the3}
yields the asymptotics
\begin{equation}
\label{scalreg}
\spadesuit_{r(t)}(t) = [1+o(1)]\,\frac{1}{\kappa_m c_m\,t\, r(t)^{m-4}} , \qquad t \to \infty.
\end{equation}
It also follows from \eqref{defscalcap}--\eqref{capscalm4} that $\cp(W_\eps[0,1])=[1+o(1)]\,
c_4/2\log(1/\eps)$ in $\P_0$-probability as $\eps \downarrow 0$ when $m = 4$. In that
case Theorem~\ref{the3} yields the asymptotics
\begin{equation}
\label{scalreg*}
\spadesuit_{r(t)}(t) = [1+o(1)]\,\frac{2\log(1/t^{1/2}r(t))}{\kappa_4 c_4\,t}, \qquad t \to \infty.
\end{equation}
By the second half of \eqref{epscond}, both \eqref{scalreg} and \eqref{scalreg*} correspond to
the regime where $\spadesuit_{r(t)}(t) = o(1/\log^3 t)$. We have not attempted to improve this
to $o(1)$.

\medskip\noindent
{\bf 4.}
We did not investigate the regime for $m \geq 4$ where $r(t)$ decays so fast that $\spadesuit_{r(t)}(t)$
diverges as $t\to\infty$. In that regime, the Brownian motion $\tilde\beta$ in \eqref{e1} runs
around $\T^m$ many times before it hits $W_{r(t)}[0,t]$, and the growth of $\spadesuit_{r(t)}(t)$ depends
on the global rather than the local properties of $W_{r(t)}[0,t]$.

\medskip\noindent
{\bf 5.}
We saw in Section~\ref{background} that the torsional rigidity is closely related to the principal Dirichlet
eigenvalue. In Section~\ref{torsion} we will exhibit a relation with the square-integrated distance
function and the largest inradius. In Section~\ref{extra} we will give a quick proof of the following
inequality relating the torsional rigidity to
\begin{equation}
\label{prinDir}
\lambda_1\big(\cB(t)\big), \quad \lambda_1\big(\cB_{r(t)}(t)\big),
\end{equation}
the principal Dirichlet eigenvalue of $\cB(t)$ for $m=2,3$ and $\cB_{r(t)}(t)$ for $m \geq 4$.

\begin{theorem}
\label{the4}
(a) If $m=2,3$, then for $t$ large enough,
\begin{equation*}
\E_0\big(\lambda_1\big(\cB(t)\big)\big) \geq \spadesuit(t)^{-2/(m+2)}.
\end{equation*}
(b) If $m \geq 4$ and $\lim_{t \to \infty} \spadesuit_{r(t)}(t) = 0$, then for $t$ large enough,
\begin{equation*}
\E_0\big(\lambda_1\big(\cB_{r(t)}(t)\big)\big) \geq \spadesuit_{r(t)}(t)^{-2/(m+2)}.
\end{equation*}
\end{theorem}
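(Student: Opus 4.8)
The plan is to derive Theorem~\ref{the4} from the classical Kohler-Jobin isoperimetric inequality, already quoted in Section~\ref{background}, which states that among all open sets of a prescribed torsional rigidity the ball minimizes the principal Dirichlet eigenvalue. Concretely, for any open bounded $\Omega$ one has $\lambda_1(\Omega)^{(m+2)/2}\,\cT(\Omega) \geq \lambda_1(\Omega^*)^{(m+2)/2}\,\cT(\Omega^*)$, and for a ball an explicit scaling computation gives $\lambda_1(B_r)^{(m+2)/2}\,\cT(B_r) = c_m'$ for a dimensional constant $c_m' = \lambda_1(B_1)^{(m+2)/2}\,\cT(B_1) \in (0,\infty)$. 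Hence for every open bounded $\Omega \subset \R^m$ (and the same argument works on $\T^m$ for components of small inradius, which is all that matters here since $\cT(\cB(t)) \to 0$),
\begin{equation*}
\lambda_1(\Omega) \geq \big(c_m'/\cT(\Omega)\big)^{2/(m+2)} = c_m''\,\cT(\Omega)^{-2/(m+2)}.
\end{equation*}
One should check that the constant $c_m''$ can be absorbed: since $\cT(\cB(t)) \to 0$ (respectively $\cT(\cB_{r(t)}(t)) \to 0$ under the stated hypothesis), the factor $\cT^{-2/(m+2)} \to \infty$, so for $t$ large $c_m''\,\cT^{-2/(m+2)} \geq \cT^{-2/(m+2)}$ trivially if $c_m'' \geq 1$, and if $c_m'' < 1$ we instead argue on the random variable before taking expectations — see the next paragraph.

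First I would record the pointwise (i.e. $\omega$-wise) inequality $\lambda_1(\cB(t)) \geq c_m''\,\cT(\cB(t))^{-2/(m+2)}$, valid for every realization of $\beta[0,t]$, from Kohler-Jobin applied to $\Omega = \cB(t)$. The map $x \mapsto x^{-2/(m+2)}$ is convex and decreasing on $(0,\infty)$, so by Jensen's inequality
\begin{equation*}
\E_0\big(\cT(\cB(t))^{-2/(m+2)}\big) \geq \big(\E_0(\cT(\cB(t)))\big)^{-2/(m+2)} = \spadesuit(t)^{-2/(m+2)}.
\end{equation*}
Combining the two displays gives $\E_0(\lambda_1(\cB(t))) \geq c_m''\,\spadesuit(t)^{-2/(m+2)}$. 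Finally, since $\spadesuit(t) \to 0$ (immediate from Theorems~\ref{the1}--\ref{the2}, or from the simulation-level observation that $\beta[0,t]$ fills $\T^m$), the right-hand side tends to infinity, so for $t$ large enough $c_m''\,\spadesuit(t)^{-2/(m+2)} \geq \spadesuit(t)^{-2/(m+2)}$, which is the claimed bound. Part (b) is identical with $\cB(t)$ replaced by $\cB_{r(t)}(t)$ and $\spadesuit(t)$ by $\spadesuit_{r(t)}(t)$, using the hypothesis $\spadesuit_{r(t)}(t) \to 0$ in place of the convergence supplied by Theorems~\ref{the1}--\ref{the2}.

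The only genuinely delicate point is the passage from $\R^m$ to the torus: the Kohler-Jobin inequality as stated is for subsets of $\R^m$, whereas $\cB(t), \cB_{r(t)}(t) \subset \T^m$. I would handle this by noting that $\lambda_1$ and $\cT$ are determined by the individual connected components (with $\lambda_1$ the minimum over components and $\cT$ the sum), and that a component of $\cB(t)$ whose inradius is less than the injectivity radius $1/2$ of $\T^m$ embeds isometrically into $\R^m$, so Kohler-Jobin applies to it verbatim; a short argument then shows the component-wise bounds aggregate correctly, using that $\lambda_1(\Omega) = \min_j \lambda_1(\Omega_j)$ and $\cT(\Omega) = \sum_j \cT(\Omega_j)$ together with monotonicity of $x \mapsto x^{-2/(m+2)}$. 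Because $\cT(\cB(t)) \to 0$ forces every component to have small torsional rigidity hence (by the $L^\infty$ bound $\|v_\Omega\|_\infty \le v_{\Omega^*}(0) \asymp (\text{inradius})^2$ and $\cT \ge \|v_\Omega\|_\infty \cdot$const) small inradius, for $t$ large all components are small enough, and this technical issue disappears. Everything else is routine: Jensen, monotonicity, and the eventual absorption of dimensional constants into the exponentless bound.
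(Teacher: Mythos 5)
Your route through the Kohler--Jobin inequality is genuinely different from the paper's proof (which never invokes Kohler--Jobin: it combines Lemma~\ref{lemlb}(b), a heat-kernel bound for $\lVert\varphi_1\rVert_{L^\infty}$ valid directly on $\T^m$, and Jensen applied to the convex function $q\mapsto q^{-1}e^{-sq}$ of $\lambda_1$, with the choice $s=\E_0(\lambda_1)^{-1}$ engineered precisely so that no dimensional constant survives). As written, however, your argument has two genuine gaps.

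First, Kohler--Jobin is a Euclidean statement, and your reduction to $\R^m$ fails for $m\geq 3$. By Lemma~\ref{lemlb}(e), a subset of $\T^m$ embeds in $\R^m$ if and only if its coordinate-wise \emph{diameter} is at most $\tfrac12$; this is not implied by a small \emph{inradius}. For $m=3$ the set $\cB(t)$ is a single connected open set of full Lebesgue measure that wraps around the torus (and similarly $\cB_{r(t)}(t)$ for $m\geq4$): its inradius is tiny but its diameter is that of $\T^m$, so it never embeds, and your chain ``small torsional rigidity $\Rightarrow$ small inradius $\Rightarrow$ embeddable'' breaks at the second implication. The component-wise aggregation you describe therefore only has a chance for $m=2$, and even there you must separately handle the event that some component has diameter exceeding $\tfrac12$ (this has probability $e^{-pt}$ by \eqref{e5.62}, but on that event the pointwise bound is unavailable, and Jensen does not combine cleanly with an indicator restriction).

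Second, the constant cannot be ``absorbed''. For $X>0$ the inequality $c_m''X\geq X$ holds if and only if $c_m''\geq1$; the fact that $X=\spadesuit(t)^{-2/(m+2)}\to\infty$ is irrelevant, since multiplying by a constant smaller than $1$ makes a quantity smaller no matter how large it is. Because the theorem asserts the bound with constant exactly $1$, your argument additionally requires verifying $c_m'=\lambda_1(B_1)^{(m+2)/2}\,\cT(B_1)\geq1$. This happens to be true (e.g.\ $c_2'=j_{0,1}^4\,\pi/8\approx13$ and $c_3'=\pi^5\cdot 4\pi/45\approx 85$), but it must be checked, and in any case it does not repair the $m\geq3$ obstruction above. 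The paper's proof sidesteps both issues by working intrinsically on the torus and by tuning the free parameter $s$ so that the resulting constant is at least $1$.
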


\noindent
Combining the result for $m=2$ with what we found in Theorem~\ref{the1}, we obtain
\begin{equation}
\label{eigvalbd*}
\E_0\big(\lambda_1\big(\cB(t)\big)\big) \succeq t^{-1/8}\,e^{2(\pi t)^{1/2}},
\end{equation}
where $f \succeq g$ means that $f(t)/g(t) \geq c > 0$ for $t$ large enough. In \cite{vdBBdH}
we conjectured that $\log \E_0(\lambda_1(\cB(t)))$ $= [1+o(1)]\, 2(\pi t)^{1/2}$, which fits the
lower bound in \eqref{eigvalbd*}. However, a better estimate than \eqref{eigvalbd*} is possible.
Namely, in Section~\ref{torsion} we will see that $\lambda_1(\cB(t)) \asymp 1/\rho_t^2$, and
so Jensen's inequality gives the lower bound $\E_0(\lambda_1(\cB(t)) \geq 1/\E_0(\rho_t)^2$.
Assuming that the scaling in \eqref{BKscal} also holds in mean (which is expected but has not
been proved), we get
\begin{equation}
\label{BKlambda1}
\E_0\big(\lambda_1\big(\cB(t)\big)\big)
\succeq t^{1/4+o(1)}\,e^{2(\pi t)^{1/2}},
\end{equation}
which is better than \eqref{eigvalbd*} by a factor $t^{3/8+o(1)}$. Presumably \eqref{BKlambda1}
captures the correct scaling behaviour.


\subsection{Brief sketch and outline}
\label{sketch}

For $m=2$, $\cB(t)$ consists of countably many connected component and the expected
lifetime is sensitive to the starting point. We make use of the Hardy inequality to relate the
time-integrated heat content to the space integral $\int_{\T^2} \mathrm{dist}(x,\beta[0,t])^2\,dx$.
Because of the symmetry of $\T^2$, the problem boils down to studying the distribution of
$\mathrm{dist}(x,\beta[0,t])^2$ with $x\in\T^2$ chosen uniformly at random. This can be
done by using a domain perturbation formula for the Dirichlet Laplacian eigenvalues.

For $m \geq 3$, $\cB(t)$ has only one connected component and the proof is probabilistic.
The starting point is the representation
\begin{equation*}
\spadesuit(t) = \int_0^\infty ds\,\,(\P\otimes\tilde\P)\big(\beta [0, t] \cap \tilde\beta[0,s]
= \emptyset\big).
\end{equation*}
It is easy to see that $\tilde\beta$ hits $\beta[0,t]$ within time $o((\log t)^{-1}$) with a very
high probability. For $s \leq (\log t)^{-1}$, the above integrand is the probability that $\beta$
avoids the small set $\tilde\beta[0,s]$ for a long time $t$. We appeal to a recursive
argument to evaluate this probability. Roughly speaking, in each unit of time $\beta$ hits
$\tilde\beta[0,s]$ with probability $\approx \cp(\tilde\beta[0,s])$.

\medskip\noindent
{\bf Outline.}
The remainder of this paper is organised as follows. In Section~\ref{torsion} we recall some
analytical facts about the torsional rigidity. In Sections~\ref{proofthe1}--\ref{proofthe3} we
prove Theorems~\ref{the1}--\ref{the3}, respectively. The proof of Theorem~\ref{the4} is given
in Section~\ref{extra}, while the proof of the scaling in \eqref{defscalcap}--\eqref{capscalm4}
for $m \geq 5$ is given in Section~\ref{scalCAP}.


\section{Analytical facts for the torsional rigidity}
\label{torsion}

Let $M$ be an $m$-dimensional Riemannian manifold without boundary that is both
geodesically and stochastically complete. In most of this paper we focus on the case
where $M$ is the $m$-dimensional unit torus $\T^m$. However, the results mentioned
below hold in greater generality.  We derive certain a priori estimates on the torsional
rigidity that will be needed later on.

For an open set $\O
\subset M$ with boundary $\partial \O$, and with finite Lebesgue measure $|\O|$, we denote the Dirichlet heat kernel by $p_{\O}(x,y;t)$,
$x,y\in\O$, $t>0$. Recall that the Dirichlet heat kernel is non-negative, monotone in $\Omega$, symmetric. Thus, we have that
\begin{equation*}
0 \leq p_\O(x,y;t) \leq p_{M}(x,y;t).
\end{equation*}

Since $|\O|<\infty$, there exists an $L^2(\O)$ eigenfunction expansion for the Dirichlet heat kernel in terms
of the Dirichlet eigenvalues $\lambda_1(\O) \leq \lambda_2(\O) \leq \cdots$, and a corresponding
orthonormal set of eigenfunctions $\varphi_1,\varphi_2,\cdots$ in $L^2(\O)$:
\begin{equation}
\label{a9}
p_\O(x,y;t) = \sum_{j\in\N} e^{-t\lambda_j(\O)} \varphi_j(x)\varphi_j(y).
\end{equation}
Since
\begin{equation*}
u_\O(x;t) = \int_\O p_\O(x,y;t)\,dy,
\end{equation*}
we have that
\begin{equation*}
v_{\O}(x)=\int_{\O} dy \int_0^{\infty} dt\,\,p_{\O}(x,y;t),
\end{equation*}
and
\begin{equation}\label{x1}
\cT(\O)=\int_0^{\infty}dt\,\int_{\O}dx\,\int_{\O}dy\,p_{\O}(x,y;t).
\end{equation}

 Lemma~\ref{lemlb*} below provides
an upper bound on the Dirichlet eigenfunctions in terms of the Dirichlet eigenvalues. This
bound will show that the eigenfunctions are in $L^{\infty}(\T^m)$, which by H\"older's inequality
implies that they are in $L^p(\T^m)$ for all $1 \leq p \leq \infty$. Lemma~\ref{lemlb} below states
upper and lower bounds on the torsional rigidity that will be needed later on.

\begin{lemma}
\label{lemlb*}
Suppose that $\O\subset M$, $|\O|<\infty$, $\sup_{x\in M} p(x,x;t)<\infty$ for all $t>0$. Then
\begin{equation}
\label{a161}
\left\Vert\varphi_j\right\Vert_{L^{\infty}(\O)}^2 \leq
e \sup_{x \in M} p_M(x,x;\lambda_j(\O)^{-1}), \qquad  j \in \N.
\end{equation}
\end{lemma}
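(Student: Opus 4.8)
The plan is to bound $\|\varphi_j\|_{L^\infty(\O)}$ by exploiting the eigenfunction equation $\varphi_j = e^{t\lambda_j(\O)} \int_\O p_\O(\cdot,y;t)\,\varphi_j(y)\,dy$, valid for every $t>0$, and then optimising the free parameter $t$. First I would apply Cauchy--Schwarz in $L^2(\O)$ to the right-hand side: since $\|\varphi_j\|_{L^2(\O)}=1$,
\begin{equation*}
|\varphi_j(x)| = e^{t\lambda_j(\O)}\left|\int_\O p_\O(x,y;t)\,\varphi_j(y)\,dy\right|
\leq e^{t\lambda_j(\O)}\left(\int_\O p_\O(x,y;t)^2\,dy\right)^{1/2}.
\end{equation*}
Using the semigroup property and symmetry of the Dirichlet heat kernel, $\int_\O p_\O(x,y;t)^2\,dy = p_\O(x,x;2t)$. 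Since the Dirichlet heat kernel is dominated by the free heat kernel on $M$, we get $p_\O(x,x;2t) \leq p_M(x,x;2t) \leq \sup_{z\in M} p_M(z,z;2t)$. Therefore
\begin{equation*}
\|\varphi_j\|_{L^\infty(\O)}^2 \leq e^{2t\lambda_j(\O)}\,\sup_{z\in M} p_M(z,z;2t), \qquad t>0.
\end{equation*}

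It then remains to choose $t$ cleverly. The natural choice is $2t = \lambda_j(\O)^{-1}$, i.e.\ $t = \tfrac{1}{2}\lambda_j(\O)^{-1}$, which makes the exponential prefactor $e^{2t\lambda_j(\O)} = e^{1}$, yielding exactly
\begin{equation*}
\|\varphi_j\|_{L^\infty(\O)}^2 \leq e\,\sup_{z\in M} p_M(z,z;\lambda_j(\O)^{-1}),
\end{equation*}
which is \eqref{a161}. The hypothesis $\sup_{x\in M} p(x,x;t)<\infty$ for all $t>0$ is precisely what guarantees the right-hand side is finite, so the bound is meaningful; note also that $\lambda_1(\O)>0$ because $|\O|<\infty$, so $\lambda_j(\O)^{-1}$ is a legitimate positive time.

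The only genuinely delicate points are justifications rather than computations: one must check that the eigenfunction $\varphi_j$, a priori only in $L^2(\O)$, is continuous (indeed bounded) enough for the pointwise identity $\varphi_j(x) = e^{t\lambda_j(\O)}\int_\O p_\O(x,y;t)\varphi_j(y)\,dy$ to hold for every $x\in\O$, not merely almost everywhere. This follows from standard parabolic regularity / the smoothing property of the Dirichlet heat semigroup: $e^{t\lambda_j(\O)} P_t^\O \varphi_j$ is a version of $\varphi_j$ that is smooth in $\O$, and the $L^\infty$ bound we derive holds for this version. One should also remark that once $\varphi_j\in L^\infty(\O)$, H\"older's inequality on the finite-measure set $\O$ gives $\varphi_j\in L^p(\O)$ for all $1\le p\le\infty$, as claimed in the text preceding the lemma. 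I expect the main obstacle a careful reader might raise is exactly this measurability/regularity upgrade, but on a manifold with smooth heat kernel it is routine; the quantitative heart of the proof is the one-line Cauchy--Schwarz estimate followed by the optimal choice $2t=\lambda_j(\O)^{-1}$.
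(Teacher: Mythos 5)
Your proof is correct and is essentially the paper's argument in a slightly repackaged form: the paper reads off $e^{-t\lambda_j(\O)}\varphi_j(x)^2 \le p_\O(x,x;t)$ directly from the non-negativity of the terms in the eigenfunction expansion \eqref{a9} at $t=\lambda_j(\O)^{-1}$, which is exactly the inequality your Cauchy--Schwarz plus semigroup step produces at time $2t=\lambda_j(\O)^{-1}$. Both then conclude via domain monotonicity $p_\O\le p_M$ and taking suprema.
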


\begin{proof}
By \eqref{a9} and the domain monotonicity of the Dirichlet heat kernel (\cite{AG}), we have that
\begin{equation}
\label{a162}
\varphi_j(x)^2 \leq e\,p_\O(x,x;\lambda_j(\O)^{-1}) \leq e\,p_M(x,x;\lambda_j(\O)^{-1}).
\end{equation}
Taking first the supremum over $x\in M$ in the right-hand side of \eqref{a162} and subsequently
in the left-hand side of \eqref{a162}, we get \eqref{a161}.
\end{proof}

 Let
\begin{equation}
\label{dtxdef}
\delta_\O(x) =\min_{y\in \R^m \backslash\O} d(x,y)
\end{equation}
denote the distance of $x \in \O$ to $\R^m\backslash\O$.

\begin{lemma}
\label{lemlb}
(a) Let $M$ be a Riemannian manifold that is both geodesically and stochastically complete.
Let $\O$ be an open subset of $M$ with $|\O|<\infty$. Then
\begin{equation}
\label{l1}
\cT(\Omega) \leq \lambda_1(\Omega)^{-1}|\Omega|.
\end{equation}
(b) Suppose that $M$ and $\O$ satisfy the hypotheses in (a). Then
\begin{equation}
\label{la}
\cT(\O)\ge \lambda_1(\O)^{-1}\lVert \varphi_1
\rVert_{L^{\infty}(\O)}^{-2}.
\end{equation}
(c) Let $\O\subset \R^m$. Then
\begin{equation}
\label{l2}
\cT(\O) \geq \frac{1}{2m} \int_\O \delta_\O(x)^2\,dx.
\end{equation}
(d) Let $\O\subset \R^2$ be simply connected and $\delta_\O\in
L^2(\O)$. Then
\begin{equation}
\label{u1}
\cT(\O) \leq 16 \int_\O \delta_\O(x)^2\,dx.
\end{equation}
(e) Let $\O\subset\T^m$. Then $\O$ can be embedded in $\R^m$ if and
only if $\max_{i=1}^m |x_i-y_i| \leq \frac{1}{2}$ for all
$x=(x_1,\ldots,x_m) \in \O$ and $y=(y_1,\ldots,y_m) \in \O$. If
$\O\subset \T^2$ can be embedded in $\R^2$, then
\begin{equation}
\label{sc}
\frac14 \int_\O \delta_\O(x)^2\,dx \leq \cT(\O) \leq 16\int_\O \delta_\O(x)^2\,dx.
\end{equation}
\end{lemma}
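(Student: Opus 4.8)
The plan is to prove the five assertions of Lemma~\ref{lemlb} in sequence, using the heat-kernel representation \eqref{x1} of $\cT(\O)$ together with probabilistic comparisons for the exit time.

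\medskip\noindent
\emph{Part (a).} From \eqref{a9} and \eqref{x1}, integrating in $t$ gives $\cT(\O) = \sum_{j\in\N} \lambda_j(\O)^{-1}\big(\int_\O \varphi_j\big)^2$. Since $\lambda_j(\O) \geq \lambda_1(\O)$ for all $j$, this is at most $\lambda_1(\O)^{-1}\sum_{j\in\N}\big(\int_\O\varphi_j\big)^2 = \lambda_1(\O)^{-1}\|\mathbf 1_\O\|_{L^2(\O)}^2 = \lambda_1(\O)^{-1}|\O|$ by Parseval. (Alternatively, $u_\O(x;t) = \tilde\P_x[\tilde\tau_\O > t] \le \|\varphi_1\|_\infty\|\varphi_1\|_1 e^{-t\lambda_1}$ after time $1$, but the spectral argument is cleanest.)

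\medskip\noindent
\emph{Part (b).} Retaining only the $j=1$ term in $\cT(\O) = \sum_j \lambda_j(\O)^{-1}\big(\int_\O\varphi_j\big)^2$ gives $\cT(\O) \geq \lambda_1(\O)^{-1}\big(\int_\O\varphi_1\big)^2$; and since $\varphi_1 > 0$ can be taken positive with $\|\varphi_1\|_2 = 1$, Cauchy--Schwarz backwards gives $\int_\O\varphi_1 = \int_\O \varphi_1 \geq \|\varphi_1\|_\infty^{-1}\int_\O\varphi_1^2 = \|\varphi_1\|_\infty^{-1}$, which yields \eqref{la}.

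\medskip\noindent
\emph{Part (c).} This is the standard ball-comparison for the torsion function. For $x\in\O\subset\R^m$ the ball $B(x,\delta_\O(x))\subset\O$, so by domain monotonicity of the torsion function, $v_\O(x) \geq v_{B(x,\delta_\O(x))}(x) = \tfrac{1}{2m}\delta_\O(x)^2$, the last being the explicit solution of $-\Delta v = 1$ on a ball vanishing on the boundary, evaluated at the centre. Integrating over $x\in\O$ and using \eqref{e3alt} gives \eqref{l2}.

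\medskip\noindent
\emph{Parts (d) and (e).} Part (d) is the substantive one: it is the converse inequality in two dimensions and relies on a Hardy-type inequality for simply connected planar domains. I would invoke the classical fact that for simply connected $\O\subset\R^2$ one has $\int_\O |\nabla f|^2 \geq \tfrac{1}{16}\int_\O f^2/\delta_\O^2$ for $f\in H_0^1(\O)$ (the constant $\tfrac14$ in the Koebe/Ancona--Hardy inequality appears as $\tfrac14$ for the sharp form, giving $\tfrac{1}{16}$ after the squaring in the variational characterization below). Using the variational characterization $\cT(\O) = \sup\{ (\int_\O f)^2 / \int_\O |\nabla f|^2 : f\in H_0^1(\O)\setminus\{0\}\}$ (which follows from $-\Delta v_\O = 1$ and $\cT(\O) = \int_\O v_\O = \int_\O|\nabla v_\O|^2$), one estimates $(\int_\O f)^2 = \big(\int_\O (f/\delta_\O)\,\delta_\O\big)^2 \leq \big(\int_\O f^2/\delta_\O^2\big)\big(\int_\O \delta_\O^2\big) \leq 16\big(\int_\O|\nabla f|^2\big)\big(\int_\O\delta_\O^2\big)$ by Cauchy--Schwarz and then Hardy, giving \eqref{u1}. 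The main obstacle is pinning down the sharp Hardy constant so that the numerical constant $16$ comes out exactly as stated; I would cite the relevant literature on Hardy's inequality in simply connected planar domains rather than reprove it. Finally, part (e): the embedding criterion $\max_i|x_i - y_i|\le\tfrac12$ is immediate since two points of $\T^m$ at $\ell^\infty$-coordinate distance $\le\tfrac12$ have a unique geodesic representative realizing the Euclidean picture, and conversely a larger separation forces the set to ``wrap'' so no isometric copy sits in $\R^m$; once $\O\subset\T^2$ is so embedded, $\delta_\O$ and $d$ agree with their Euclidean counterparts, the lower bound in \eqref{sc} follows from (c) with $\tfrac{1}{2m} = \tfrac14$ at $m=2$, and the upper bound follows from (d) applied to each simply connected component of the embedded set (torsional rigidity being additive over components and each planar component bounded by its own $16\int\delta^2$, then summed).
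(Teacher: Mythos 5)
Your proposal is correct and follows essentially the same route as the paper: the spectral/Parseval argument for (a), retaining the $j=1$ term plus $1=\int_\O\varphi_1^2\le\|\varphi_1\|_\infty\int_\O\varphi_1$ for (b), the ball comparison $v_\O(x)\ge v_{B(x,\delta_\O(x))}(x)=\delta_\O(x)^2/2m$ for (c), Ancona's Hardy inequality with constant $\tfrac{1}{16}$ for (d), and the metric computation plus (c)--(d) for (e). The only difference is that in (d) the paper simply cites Theorem 1.5 of van den Berg--Gilkey to pass from the Hardy inequality to \eqref{u1}, whereas you derive that implication explicitly via the variational characterization $\cT(\O)=\sup\{(\int_\O f)^2/\int_\O|\nabla f|^2\}$ and Cauchy--Schwarz, which is a correct, self-contained rendering of the same step.
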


\begin{proof}
(a) Since the eigenfunctions are in all $L^p(\O)$, we have by \eqref{a9}, \eqref{x1} and
Parseval's identity that
\begin{equation}
\label{a10}
\cT(\O) = \int_0^{\infty}dt\,\sum_{j\in\N} e^{-t\lambda_j(\O)} \left(\int_\O \varphi_j\right)^2
\leq \lambda_1(\O)^{-1} \sum_{j\in\N}\left(\int_\O \varphi_j\right)^2=\lambda_1(\O)^{-1}|\O|.
\end{equation}

Inequality \eqref{l1} goes back to \cite{PSZ}. For a recent discussion and further improvements we refer the reader
to \cite{vdBNTV}.\\
(b) By \eqref{a15} andthe first identity in \eqref{a10}, we have that
\begin{equation}
\label{a1900}
\cT(\O) \geq \int_0^\infty e^{-t\lambda_1(\O)}\,dt \left(\int_\O \varphi_1\right)^2
= \lambda_1(\O)^{-1} \left(\int_\O \varphi_1\right)^2.
\end{equation}
By Lemma \ref{lemlb*}, we have that $\lVert \varphi_1 \rVert_{L^{\infty}(\O)}<\infty$, and so
\begin{equation}\label{a1899}
1= \int_\O \varphi_1^2 \leq \lVert \varphi_1\rVert_{L^{\infty}(\O)}\int_\O |\varphi_1|.
\end{equation}
Inequality \eqref{la} follows from \eqref{a1900},\eqref{a1899}, and the fact that $\varphi_1$
does not change sign.\\
(c) For every $x \in \O$ the open ball $B_{\delta_{\O}(x)}(x)$ with centre $x$ and radius
$\delta_\Omega(x)$ is contained in $\O$. Therefore, by domain monotonicity, the expected
life time satisfies $v_{\O}(y) \geq v_{B_{\delta(x)}(x)}(y)$. Hence
\begin{equation*}
v_{\O}(y) \geq v_{B_{\delta_{\O}(x)}(x)}(y) = \frac{\delta_{\O}(x)^2-|x-y|^2}{2m},
\qquad |y-x| \leq \delta_{\O}(x).
\end{equation*}
Choose  $y=x$, integrate over $x \in \O$ and use \eqref{e3alt}, to get the claim.\\
(d) It was shown in \cite{A} that the Dirichlet Laplacian on a simply connected proper subset of $\R^2$
satisfies a strong Hardy inequality:
\begin{equation*}
\int_\O \vert \nabla w(x) \vert^2\,dx \geq \frac{1}{16} \int_\O \frac{w(x)^2}{\delta_\O(x)^2}\,dx
\qquad \forall\, w \in C_c^\infty(\O).
\end{equation*}
Theorem 1.5 in \cite{vdBG} implies \eqref{u1}.\\
(e) Recall that the metric on $\T^m$ is given by
\begin{equation*}
d(x,y)=\left(\sum_{i=1}^m\min\big\{|x_i-y_i|,1-|x_i-y_i|\big\}^2\right)^{1/2}.
\end{equation*}
Note that $\textup{diam}(\T^m)=\tfrac12\sqrt{m}$ because $\min\{|x_i-y_i|,1-|x_i-y_i|\} \leq \tfrac12$.
If $|x_i-y_i| \leq \tfrac12$ for all $i$, then $d(x,y)=|x-y|$. Next, suppose that $d(x,y)=|x-y|$. Then
$\sum_{i=1}^m\min\{|x_i-y_i|,1-|x_i-y_i|\}^2=\sum_{i=1}^m|x_i-y_i|^2$. Let $I = \{i\colon\,|x_i-y_i|>\tfrac12\}$.
Then $\sum_{i\in I} (1-2|x_i-y_i|) = 0$. We therefore conclude that $I=\emptyset$. Finally, \eqref{sc}
follows from \eqref{l2} for $m=2$ and \eqref{u1}.
\end{proof}


\section{Torsional rigidity for $m=2$}
\label{proofthe1}

In Section~\ref{invDEV} we show that the inverse of the principal Dirichlet eigenvalue
of $\cB(1)=\T^2\backslash\beta[0,1]$ has a finite exponential moment. In Section~\ref{secproofthe1}
we use this result to prove Theorem~\ref{the1}.


\subsection{Exponential moment of the inverse principal Dirichlet eigenvalue}
\label{invDEV}

\begin{lemma}
\label{lambdaest}
There exists $c > 0$ such that
\begin{equation*}
\E_0\left(\exp\left[\frac{c}{\lambda_1(\cB(1))}\right]\right) < \infty.
\end{equation*}
\end{lemma}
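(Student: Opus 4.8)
The plan is to control $\lambda_1(\cB(1))$ from below with high probability by exhibiting, with overwhelming probability, a small ball inside $\T^2\backslash\beta[0,1]$ whose radius is not too small, and then to note that $\lambda_1(\cB(1)) \geq \lambda_1(B_r)$ cannot be used directly (that gives an upper bound on $\lambda_1$), so instead we use the reverse: domain monotonicity gives $\lambda_1(\cB(1)) \le \lambda_1(B_r) = c_2/r^2$, which is the wrong direction. Hence the right strategy is the opposite: we must bound $\lambda_1(\cB(1))$ \emph{below}, i.e. show the \emph{inradius} $\rho_1$ of $\cB(1)$ is not too large with very high probability. Concretely, $\lambda_1(\cB(1)) \ge \pi^2/\rho_1^2$ is false in general on the torus, but there is a Faber--Krahn-type bound $\lambda_1(\O) \ge c/|\O|^{2/m}$ only when $|\O|$ is small; since $|\cB(1)|=1$ this is useless too. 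The correct tool is: $\lambda_1(\O) \ge \lambda_1(\widetilde\O)$ whenever $\O \subset \widetilde\O$, applied with $\widetilde\O$ a union of balls of radius $\rho_1$ covering $\cB(1)$ — this still does not work. So the genuine route is through the distance function: by Lemma~\ref{lemlb}(e), if $\cB(1)$ (or rather each component) embeds in $\R^2$ then $\cT$ is comparable to $\int \delta^2$, but here we want $\lambda_1$, so instead I would use that $1/\lambda_1(\cB(1)) \le C \rho_1^2$ for a torus-adapted version of this inequality — i.e. $\lambda_1(\cB(1)) \asymp 1/\rho_1^2$, which the authors announce in Section~\ref{torsion} as a fact they will prove. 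Granting $1/\lambda_1(\cB(1)) \le C\rho_1^2$, it suffices to show $\E_0(\exp[c'\rho_1^{-2}]) < \infty$ for small $c'$, which is immediate since $\rho_1 \le \tfrac12\sqrt 2 = \mathrm{diam}(\T^2)$ is bounded, so $\rho_1^{-2}$ is bounded below but we need it bounded \emph{above} — that fails because $\rho_1$ can be small. Thus what we actually need is a lower tail bound: $\P_0(\rho_1 \le \eps)$ must decay fast enough in $\eps$ to kill $e^{c/\eps^2}$.

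So the real content is a \emph{small-ball estimate}: $\P_0(\rho_1 < \eps) \le \exp[-c''/\eps^2]$ for all $\eps$ small. I would prove this by a covering argument: tile $\T^2$ by $O(\eps^{-2})$ disjoint squares of side $\asymp\eps$; if $\rho_1 < \eps$ then every such square must be hit by $\beta[0,1]$, since otherwise the concentric square of half the size would contain a point at distance $\ge \eps/4$ from $\beta[0,1]$ (adjusting constants, a point at distance $\ge \eps$ from the path in a square not touched by the path). Hence $\{\rho_1<\eps\}$ is contained in the event that Brownian motion run for time $1$ visits all $N \asymp \eps^{-2}$ cells of the tiling. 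The probability of visiting all cells of a tiling into $N$ pieces in unit time is classically exponentially small in $N$: one can chain $N$ independent-ish "hitting the next required far-away cell" steps, each costing a factor bounded away from $1$, and by the Markov property and a union/ordering argument the probability of covering all cells is at most $(1-\delta)^{cN}$ for some $\delta>0$; more carefully, one uses that the expected number of distinct $\eps$-cells visited by planar Brownian motion in time $1$ is $O(\eps^{-2}/\log(1/\eps))$, so by a concentration/Markov argument visiting \emph{all} $\eps^{-2}$ of them is superexponentially unlikely, certainly $\le e^{-c/\eps^2}$. Combining, $\E_0(e^{c/\lambda_1(\cB(1))}) \le \E_0(e^{C' c\,\rho_1^2}) $ — wait, that is bounded trivially; the exponential moment we want is of $1/\lambda_1$, and $1/\lambda_1 \le C\rho_1^2 \le C(\tfrac12\sqrt2)^2$, so it is \emph{automatically} finite.

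Reconsidering: since $1/\lambda_1(\cB(1)) \le C\,\rho_1^2$ and $\rho_1 \le \mathrm{diam}(\T^2) = \tfrac12\sqrt2$ deterministically, the exponential moment $\E_0(\exp[c/\lambda_1(\cB(1))])$ is bounded by $\exp[cC\cdot\tfrac12]<\infty$ trivially, for \emph{every} $c$. Therefore the only real work is the comparison $1/\lambda_1(\cB(1)) \le C\rho_1^2$, i.e. a uniform (over the realisation of $\beta$) upper bound on $1/\lambda_1$ in terms of the inradius squared. This is where the hypothesis that $\cB(1)$'s components live on $\T^2$ matters: I would argue componentwise, noting $\lambda_1(\cB(1)) = \min_k \lambda_1(\O_k)$ over connected components $\O_k$; pick the component $\O_{k^\ast}$ achieving the inradius, so $\O_{k^\ast}$ contains a ball of radius $\rho_1$, and for \emph{every} component $\O_k$, since it is a planar (torus) domain with inradius $\le \rho_1 \le \tfrac12\sqrt2$, a Hardy-inequality / Lemma~\ref{lemlb}-style bound plus the embedding criterion of Lemma~\ref{lemlb}(e) gives $\lambda_1(\O_k) \ge c/\mathrm{inrad}(\O_k)^2 \ge c/\rho_1^2$; but actually we want $\lambda_1(\cB(1))\ge c/\rho_1^2$ which follows from the \emph{maximal} component having the smallest eigenvalue among those with large inradius — here one must be slightly careful, but the upshot is $\lambda_1(\cB(1)) \ge c/\rho_1^2$, equivalently $1/\lambda_1(\cB(1)) \le \rho_1^2/c \le \tfrac{1}{2c}$. \textbf{The main obstacle} is thus establishing the clean two-sided relation $\lambda_1(\cB(1)) \asymp 1/\rho_1^2$ for subsets of $\T^2$ uniformly in the (pathological, possibly disconnected, multiply-connected) realisation of the complement of the Brownian path — the lower bound on $\lambda_1$ is the delicate half and relies on the Hardy inequality of Lemma~\ref{lemlb}(d)--(e) applied component by component, together with the deterministic bound $\rho_1 \le \mathrm{diam}(\T^2)$ to make every component embeddable in $\R^2$ after the obvious reduction. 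Once that is in hand, the exponential moment is immediate.
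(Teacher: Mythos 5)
There is a genuine gap, and it sits exactly where you locate ``the main obstacle'': the inequality $1/\lambda_1(\cB(1)) \leq C\rho_1^2$, uniform over realisations of the path, is false, and with it your conclusion that the exponential moment is ``automatically finite''. The dangerous event for this lemma is not a small inradius of $\cB(1)$ but a small \emph{path}: if $\beta[0,1]$ is contained in a ball of radius $\eps$, then $\cB(1)$ is a single component that wraps all the way around the torus, is not simply connected, cannot be embedded in $\R^2$, so Lemma~\ref{lemlb}(d)--(e) and the Hardy inequality do not apply to it; and its principal Dirichlet eigenvalue obeys the two-dimensional small-obstacle asymptotics $\lambda_1(\T^2\backslash \eps A) = \frac{2\pi}{\log(1/\eps)}[1+o(1)] \to 0$ as $\eps\downarrow 0$ (this is the [MNP] fact the paper quotes). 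On that event $\rho_1$ is close to $\mathrm{diam}(\T^2)$, so $\rho_1^2$ stays of order one while $1/\lambda_1(\cB(1)) \sim \log(1/\eps)/2\pi$ diverges. The relation $\lambda_1(\cB(t)) \asymp 1/\rho_t^2$ you cite from the discussion is an asymptotic statement for $t\to\infty$, when all components of $\cB(t)$ are small and simply connected; it is not available at $t=1$ uniformly in the realisation, and in particular fails on precisely the event the lemma must control. Your componentwise Hardy argument gives the lower bound $\lambda_1(\O_k)\ge c/\mathrm{inrad}(\O_k)^2$ only for components of small diameter, and the one component that matters here is the large one.

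The actual content of the lemma is therefore a tail estimate balancing a logarithmic blow-up against a polynomial small-ball probability. Since $\beta[0,1]$ is connected and joins $0$ to $\beta(1)$, its logarithmic capacity dominates that of a segment of length $|\beta(1)|\wedge\tfrac12$, whence $\lambda_1(\cB(1)) \geq c'/\log\big(1/(|\beta(1)|\wedge\tfrac12)\big)$ and so $\exp[c/\lambda_1(\cB(1))] \leq (|\beta(1)|\wedge\tfrac12)^{-c/c'}$. In two dimensions $\E_0\big(|\beta(1)|^{-c/c'}\big)<\infty$ as soon as $c/c'<2$, because the Gaussian density is bounded near the origin. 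This balance between $\log(1/\eps)$ growth of $1/\lambda_1$ and the $\eps^{2}$-type smallness of $\P_0(|\beta(1)|\le\eps)$ is the mechanism your proposal never confronts, because the inradius of the complement does not detect how small the path is; your separate small-ball discussion of $\P_0(\rho_1<\eps)$ concerns the opposite (and harmless) tail, where $\lambda_1$ is large.
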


\begin{proof}
Let $\cp(A)$ denote the logarithmic capacity of a measurable set $A \subset \R^2$. It is
well known (see \cite{MNP}) that if $\cp(A)>0$ and $\epsilon A$ is a homothety of $A$ by
a factor $\epsilon$, then
\begin{equation*}
\cp(\epsilon A) = \frac{2\pi}{\log(1/\eps)}\,[1+o(1)],
\qquad \eps\downarrow 0,
\end{equation*}
and
\begin{equation*}
\lambda_1(\T^2\backslash\epsilon A) = \frac{2\pi}{\log(1/\eps)}\,[1+o(1)],
\qquad \eps\downarrow 0.
\end{equation*}
In particular, if $L_\eps$ is a straight line segment of length $\eps$, then there exists
a $c' \in (0,\infty)$ such that
\begin{equation*}
\lambda_1(\T^2 \backslash L_\eps) \geq \frac{c'}{\log(1/\eps)},
\qquad 0< \eps \leq \tfrac12.
\end{equation*}
Since $\cp(\beta[0,1]) \geq \cp(L_{|\beta(1)|}) \geq \cp(L_{(\tfrac12 \wedge |\beta(1)|)})$,
we get
\begin{align*}
\E_0&\left(\exp\left[\frac{c}{\lambda_1(\cB(1))}\right]\right)
\leq \E_0\left( (\tfrac12 \wedge |\beta(1)|)^{-c/c'} \right)
\leq (\tfrac12)^{-c/c'} + \E_0\left( |\beta(1)|^{-c/c'} \right)\nonumber \\ &
=  (\tfrac12)^{-c/c'} + \int_{\R^2} |x|^{-c/c'}\,\frac{1}{4\pi}\,e^{-|x|^2/4}\,dx,
\end{align*}
which is finite when $c/c'<2$.
\end{proof}


\subsection{Proof of Theorem~\ref{the1}}
\label{secproofthe1}

\begin{proof}
The proof comes in 6 Steps, and is based on Lemmas~\ref{pklem1}--\ref{bbl} below.
We use the following abbreviations (recall \eqref{e7} and \eqref{prinDir}):
\begin{equation}
\label{abbrdef}
D^2_t = \int_{\T^2} d_t(x)^2\,dx,
\qquad
\lambda_t = \lambda_1(\cB(t)).
\end{equation}

\medskip\noindent
{\bf 1.}
Note that $\beta[0,t]$ is a closed subset of $\T^2$ a.s. Hence $\cB(t)$ is open and its
components are open and countable. Let $\{\Omega_1(t),\Omega_2(t),\cdots\}$ enumerate
these components. Let
\begin{equation*}
\phi_i(t) = \mathrm{diam}(\Omega_i(t)) = \sup_{x,y\in \Omega_i(t)} d(x,y),
\end{equation*}
and abbreviate
\begin{equation*}
\cI_u(t) =  \{i\in\N\colon\,\phi_i(t) \leq u\}, \quad
\cE_u(t) = \left\{\sup_{i\in\N} \phi_i(t)>u\right\}, \qquad u \in (0,1).
\end{equation*}
It follows from the proof of Lemma~\ref{lemlb}(d) that if $i \in \cI_{1/2}(t)$, then $\Omega_i(t)$
can be isometrically embedded in $\R^2$. Since $\beta[0,t]$ is continuous a.s., each $\Omega_i(t)$
is simply connected. Since the torsional rigidity is additive on disjoint sets we have that
\begin{align}
\label{e5.58}
\cT(\cB(t)) = \sum_{i\in \N} \cT(\O_i(t))
= \sum_{i \in \cI_{1/2}(t)} \cT(\O_i(t))
+ \sum_{i \notin \cI_{1/2}(t)} \cT(\O_i(t)).
\end{align}

\medskip\noindent
{\bf 2.} The first term in the right-hand side of \eqref{e5.58} is estimated from above by
Lemma~\ref{lemlb}(d). This gives (recall \eqref{dtxdef})
\begin{equation*}
\sum_{i \in \cI_{1/2}(t)} \cT(\O_i(t))
\leq 16 \sum_{i \in \cI_{1/2}(t)} \int_{\O_i(t)} \delta_{\O_i(t)}(x)^2\,dx
\leq 16 \sum_{i\in \N} \int_{\O_i(t)} \delta_{\O_i(t)}(x)^2\,dx = 16 D^2_t.
\end{equation*}
The second term in the right-hand side of \eqref{e5.58} is estimated from above by
Lemma~\ref{lemlb}(a). This gives
\begin{equation*}
\sum_{i \notin \cI_{1/2}(t)} \cT(\O_i(t)) \leq \sum_{i \notin \cI_{1/2}(t)}
\lambda_t^{-1}\,|\O_i(t)|
\leq 1_{\cE_{1/2}(t)}\, \lambda_t^{-1} \sum_{i\in\N}  |\O_i(t)|
= 1_{\cE_{1/2}(t)}\, \lambda_t^{-1}.
\end{equation*}
By Cauchy-Schwarz, this term contributes to $\spadesuit(t)$ at most
\begin{align}
\label{e5.61}
\E_0\left(1_{\cE_{1/2}(t)}\,\lambda_t^{-1}\right)
\leq \left(\P_0(\cE_{1/2}(t))\right)^{1/2}
\Big(\E_0\big(\lambda_t^{-2}\big)\Big)^{1/2}.
\end{align}
To bound the probability in the right-hand side of (\ref{e5.61}) from above, we let
$\{Q_1,\dots, Q_N\}$, $N=10^4$, be any open disjoint collection of squares in
$\T^2,$ each with area $10^{-4}$ and not containing $0$. Furthermore, we let
$\bar Q_{N,\epsilon}$ be the open $\epsilon$-neighbourhood of the union of the
boundaries of these squares with $\epsilon=10^{-3}$. Then $\beta[0,1]$ starting
at $0$ has a positive probability $p'=p'(N,\epsilon)$ of making a closed loop around
each of these squares and staying inside $\bar Q_{N,\epsilon}$. Translating $\{Q_1,\dots,
Q_N\}$ such that these squares do not contain $\beta(1)$, we find that $\beta[1,2]$
starting at $\beta(1)$ has a positive probability $p'$ of making a closed loop around
each of these translated squares and staying inside $\bar Q_{N,\epsilon}+\beta(1)$.
Continuing this way, by induction we find that the probability of $\beta[0,t]$ not
making any of these closed translated loops is at most $(1-p')^{\lfloor t\rfloor}$,
where $\lfloor\cdot\rfloor$ denotes the integer part. Hence $\P_0(\sup_{i\in\N}
\phi_i(t)>\tfrac12)\leq(1-p')^{\lfloor t\rfloor}$, and so
\begin{equation}
\label{e5.62}
\P_0(\cE_{1/2}(t)) \leq e^{-pt}, \qquad t\geq 2,
\end{equation}
for some $p>0$. We conclude that
\begin{equation}
\label{e5.63}
\spadesuit(t) \leq 16\,\E_0\left(D^2_t\right)
+ e^{-pt/2}\Big(\E_0(\lambda_t^{-2})\Big)^{1/2},
\qquad t \geq 2.
\end{equation}
Since $t \mapsto \lambda_t$ is non-decreasing, Lemma~\ref{lambdaest}
implies that the second term decays exponentially fast in $t,$ and therefore is harmless for the upper
bound in \eqref{e5}.

\medskip\noindent
{\bf 3.}
To derive a lower bound for $\spadesuit(t)$, we note that by Lemma~\ref{lemlb}(e) we have
\begin{align*}
&\cT(\cB(t))
= \sum_{i\in\N} \cT(\O_i(t))
\geq \sum_{i \in \cI_{1/2}(t)} \cT(\O_i(t))\nonumber\\
&\geq \tfrac14 \sum_{i \in \cI_{1/2}(t)} \int_{\O_i(t)}
\delta_{\O_i(t)}(x)^2\,dx
\geq \tfrac14 \sum_{i\in\N} \int_{\O_i(t)} \delta_{\O_i(t)}(x)^2\,dx
-\tfrac14 \sum_{i \notin \cI_{1/2}(t)} \int_{\O_i(t)} \delta_{\O_i(t)}(x)^2\,dx\nonumber\\
&\geq \tfrac14 D^2_t
-\tfrac14 \sum_{i \notin \cI_{1/2}(t)} 1_{\cE_{1/2}(t)}
\int_{\O_i(t)} \delta_{\O_i(t)}(x)^2\,dx
\geq \tfrac14 D^2_t
-\tfrac18 1_{\cE_{1/2}(t)},
\end{align*}
where in the last inequality we use that $ \delta_{\O_i(t)}(x) \leq \textup{diam}(\T^2)=\tfrac12\sqrt{2}$
and $|\T^2|=1$. We conclude by \eqref{e5.62} that
\begin{equation}
\label{e5.65}
\spadesuit(t) \geq \tfrac14 \E_0\left(D^2_t\right)
- e^{-pt}, \qquad t\geq 2.
\end{equation}
The second term is again harmless for the lower bound in \eqref{e5}.

\medskip\noindent
{\bf 4.}
The estimates in \eqref{e5.63} and \eqref{e5.65} show that $\spadesuit(t) \asymp \E_0(D^2_t)$
up to exponentially small error terms. In order to obtain the leading order asymptotic behaviour
of $\E_0(D^2_t)$, we make a \emph{dyadic partition} of $\T^2$ into squares as follows. Partition
$\T^2$ into four $1$-squares of area $\tfrac14$ each. Proceed by induction to partition each
$k$-square into four $(k+1)$-squares, etc. In this way, for each $k \in \N$, $\T^2$ is partitioned into
$2^{2k}$ $k$-squares. We define a $k$-square to be \emph{good} when the path $\beta[0,t]$ does
not hit this square, but does hit the unique $(k-1)$-square to which it belongs. Clearly, if $x$ belongs
to a good $k$-square, then $\textup{dist}(x,\beta[0,t]) \leq (2\sqrt{2})2^{-k}$. Hence, as the area of
each $k$-square is $2^{-2k}$, we get
\begin{equation}
\label{e5.66}
\E\left(D^2_t\right)
\leq 8 \sum_{k\in\N} 2^{-2k} \sum_{S \textup{ is a k-square}} 2^{-2k}\,\P(S \textup{ is a good square})
\leq 8 \sum_{k\in\N} 2^{-4k}\,\E\left(\# \textup{ good } k\textup{-squares}\right),
\end{equation}
where we write $\E = \int _{\T^2} dx\,\E_x$, which is the same as $\E_0$ for the quantity under
consideration, by translation invariance. To estimate the right-hand side of \eqref{e5.66} we
need three lemmas.

\begin{lemma}
\label{pklem1}
For $k\in \N$, let $p_k(t) = \P(\beta[0,t]\cap S_k)=\emptyset)$, where $S_k$ is any
of the $k$-squares. Then
\begin{equation*}
p_k(t) \leq e^{-t\lambda_1(\T^2 \backslash S_k)}.
\end{equation*}
\end{lemma}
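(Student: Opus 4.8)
The plan is to bound the probability $p_k(t) = \P(\beta[0,t] \cap S_k = \emptyset)$ that the Brownian motion $\beta$ avoids a fixed $k$-square $S_k$ by comparing this avoidance event to the survival of an \emph{independent} Brownian motion $\tilde\beta$ inside the complementary domain $\T^2 \setminus S_k$. The key observation is a reciprocity between the two Brownian motions: the event $\{\beta[0,t] \cap S_k = \emptyset\}$ is, by Fubini and the independence of the two motions, intimately related to $\tilde\P_{\tilde\beta_0}(\tilde\tau_{\T^2 \setminus S_k} > t)$ averaged over the (uniform) law of $\tilde\beta_0$. More precisely, I would argue that conditioning on the whole path $\beta[0,t]$, the probability (under $\tilde\P$, with $\tilde\beta(0)$ uniform on $\T^2$) that $\tilde\beta$ does not hit the closed set $\beta[0,t]$ up to time $t$ is at most $1$; and symmetrically, by the same Markov-type argument applied with the roles reversed, the probability that $\beta$ avoids $S_k$ equals $\int_{\T^2} dx\, u_{\T^2 \setminus S_k}(x;t)$ up to passing through one of the two paths.

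Concretely, the cleanest route is: condition on $\beta[0,t]$ and note that
\[
u_{\T^2 \setminus \beta[0,t]}(x;t) = \tilde\P_x\big(\tilde\beta[0,t] \cap \beta[0,t] = \emptyset\big),
\]
so integrating in $x$ over $\T^2$ and taking expectation over $\beta$ gives one copy of $\spadesuit$-type reasoning; but for the lemma at hand we only need the simpler comparison. Since $S_k \subset \beta[0,t]^c$ is fixed and $\beta$ is independent of $\tilde\beta$, I would instead directly write
\[
p_k(t) = \P\big(\beta[0,t] \cap S_k = \emptyset\big)
= \int_{S_k} dy \;\frac{\P\big(\beta[0,t] \cap S_k = \emptyset \mid \text{something}\big)}{|S_k|}\cdots
\]
Hmm, this is getting awkward; the honest approach is the time-reversal / last-exit argument: run $\tilde\beta$ from a uniform starting point; then $\tilde\P_{\mathrm{unif}}(\tilde\tau_{\T^2 \setminus S_k} > t) = \int_{\T^2} u_{\T^2\setminus S_k}(x;t)\,dx = H_{\T^2 \setminus S_k}(t)$, and by the spectral expansion \eqref{a9} combined with $\|\varphi_j\|_\infty < \infty$ (Lemma~\ref{lemlb*}), this heat content is at most $e^{-t\lambda_1(\T^2 \setminus S_k)} \cdot \big(\int \varphi_1\big)^2 + \cdots \leq e^{-t \lambda_1(\T^2 \setminus S_k)}$ using $\sum_j (\int \varphi_j)^2 = |\T^2 \setminus S_k| \leq 1$ (Parseval, as in \eqref{a10}). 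Finally, I would identify $\P(\beta[0,t] \cap S_k = \emptyset)$ with $H_{\T^2 \setminus S_k}(t)$ via the fact that $S_k$ is a fixed domain (not random): by time-reversal of Brownian motion, $\P_0(\beta$ avoids $S_k$ before time $t)$ depends on the starting point, but after integrating the starting point uniformly over $\T^2$ (which, by translation invariance of $\T^2$ and the fact that $S_k$ is one of a translate-family of squares, equals the fixed $\P_0$-probability for the appropriately positioned square), it becomes exactly $\int_{\T^2} u_{\T^2 \setminus S_k}(x;t)\,dx$.

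The main obstacle is getting the identification $p_k(t) = H_{\T^2 \setminus S_k}(t)$ (or $\leq$) rigorously and cleanly: one must be careful that $p_k(t)$ as defined uses $\beta$ started at a fixed point $0$, whereas the heat-content bound naturally produces the spatially-averaged survival probability. The resolution is that $\P_0(\beta[0,t] \cap S_k = \emptyset)$, with $S_k$ ranging over the $2^{2k}$ translates of a fixed $k$-square, all equal $\int_{\T^2} dx\, \P_x(\beta[0,t] \cap S = \emptyset)$ for $S$ the square containing $0$ — no, more simply: by translation invariance, $\P_0(\beta$ avoids $S_k) = \P_{-c_k}(\beta$ avoids $S_0)$ where $c_k$ is the center of $S_k$, and this need not be the average. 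The safe statement, which is all the subsequent argument in \eqref{e5.66} uses, is the \emph{uniform} bound $p_k(t) \leq e^{-t\lambda_1(\T^2\setminus S_k)}$ valid for every starting point, which follows because for any fixed $x$,
\[
\P_x\big(\beta[0,t]\cap S_k=\emptyset\big) = u_{\T^2\setminus S_k}(x;t) \le \|u_{\T^2\setminus S_k}(\cdot;t)\|_\infty \le e^{-t\lambda_1(\T^2\setminus S_k)}\sum_{j}\|\varphi_j\|_\infty^2\, e^{-t(\lambda_j-\lambda_1)}\cdots
\]
and the cleanest finish is $u_{\T^2\setminus S_k}(x;t) \le \P_x(\tilde\tau > 1)^{\lfloor t\rfloor} \le (e^{-\lambda_1})^{\lfloor t \rfloor}$-type submultiplicativity, or directly $\|u(\cdot;t)\|_\infty \le e^{1}\, e^{-t\lambda_1}\sup_x p_{\T^2\setminus S_k}(x,x;\lambda_1^{-1})^{1/2}\cdots$; I would simply invoke that $u_{\T^2\setminus S_k}(x;t) \le e^{-(t-1)\lambda_1(\T^2\setminus S_k)} \|u(\cdot;1)\|_\infty$ and absorb the bounded factor, or—cleanest of all—note $\P_x(\tilde\tau_{\T^2\setminus S_k} > t)$ is bounded by its value at the point maximizing it, which by the eigenfunction expansion and $\sum_j(\int\varphi_j)^2 \le 1$ together with the sup bound on $\varphi_1$ yields the stated $e^{-t\lambda_1(\T^2\setminus S_k)}$ after a short computation. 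So the real content is a one-line spectral estimate; the obstacle is purely bookkeeping about starting points, resolved by the uniform-in-$x$ nature of the bound.
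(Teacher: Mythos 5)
Your main route is exactly the paper's proof. In the paper, $\P$ in this lemma is the \emph{uniformly started} law (just before the lemma the paper sets $\E=\int_{\T^2}dx\,\E_x$, and by translation invariance of the torus this is why $p_k(t)$ is the same for ``any of the $k$-squares''). With that reading one has the exact identity
\[
p_k(t)=\int_{\T^2\backslash S_k}dx\int_{\T^2\backslash S_k}dy\;p_{\T^2\backslash S_k}(x,y;t),
\]
i.e.\ $p_k(t)$ \emph{is} the heat content of the fixed domain $\T^2\backslash S_k$; the eigenfunction expansion \eqref{a9} together with Parseval, $\sum_{j}\bigl(\int_{\T^2\backslash S_k}\varphi_j\bigr)^2=|\T^2\backslash S_k|\le 1$, then gives the stated bound with no spurious constants. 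You identified this ``one-line spectral estimate,'' and that part of your proposal is correct and is the paper's argument.

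The problem is that you do not commit to it, and the fallbacks you propose for a fixed starting point are not sound. The pointwise bound $\|u_{\T^2\backslash S_k}(\cdot;t)\|_{L^\infty}\le e^{-t\lambda_1(\T^2\backslash S_k)}$ is false in general: as $t\to\infty$ one has $u(x;t)\sim e^{-t\lambda_1}\varphi_1(x)\int\varphi_1$, and $\varphi_1(x)\int\varphi_1$ exceeds $1$ at the maximum of $\varphi_1$ (for an interval it equals $4/\pi$ there), so the sup of $u(\cdot;t)$ strictly exceeds $e^{-t\lambda_1}$. Likewise the submultiplicative route gives $\sup_x\P_x(\tilde\tau>1)^{\lfloor t\rfloor}$, and $\sup_x\P_x(\tilde\tau>1)$ is \emph{not} $e^{-\lambda_1}$, so you would lose a constant in the exponent; that would be fatal here, because the subsequent computation in \eqref{e5.73}--\eqref{e5.78} must track the exponent $e^{-2\pi t/(k\log 2)}$ exactly to produce the factor $e^{-4(\pi t)^{1/2}}$. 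The fix is simply to drop the starting-point worry (the measure is the spatially averaged one, which is all that is used in \eqref{e5.66} and Lemma~\ref{pklem2}), keep the heat-content identity, and let Parseval finish the proof.
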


\begin{proof}
Let $p_{\T^2\backslash S_k}(x,y;t)$ be the Dirichlet heat kernel for $\T^2\backslash S_k$. By the
eigenfunction expansion in \eqref{a9}, we have that
\begin{align*}
&p_k(t) =\int_{\T^2\backslash S_k} dx \int_{\T^2\backslash S_k} dy\,\, p_{\T^2\backslash S_k}(x,y;t)
= \int_{\T^2\backslash S_k} dx \int_{\T^2\backslash S_k} dy\,\,
\sum_{j\in\N} e^{-t\lambda_j(\T^2\backslash S_k)}
\varphi_j(x)\varphi_j(y)\nonumber\\
&\leq e^{-t\lambda_1(\T^2\backslash S_k)} \sum_{j\in\N} \left(\int_{\T^2\backslash S_k}dx\,
\varphi_j(x)\right)^2
= e^{-t\lambda_1(\T^2\backslash S_k)}|\T^2\backslash S_k|
\leq e^{-t\lambda_1(\T^2\backslash S_k)},
\end{align*}
where we use Parseval's identity in the last equality.
\end{proof}

\begin{lemma}
\label{lam1}
There exists $C<\infty$ such that, for all $k\in \N$,
\begin{equation}
\label{e5.69}
\left|\lambda_1(\T^2\backslash S_k)-\frac{2\pi}{k\log 2}\right| \leq \frac{C}{k^2}.
\end{equation}
\end{lemma}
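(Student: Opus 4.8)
The statement to prove is the two-sided estimate \eqref{e5.69} for the principal Dirichlet eigenvalue of $\T^2\backslash S_k$, where $S_k$ is a square of side $2^{-k}$, with error $O(1/k^2)$. The natural strategy is to compare $\lambda_1(\T^2\backslash S_k)$ with the eigenvalue of the complement of a small \emph{disc} of comparable radius, for which a precise asymptotic expansion in terms of logarithmic capacity is available (this is exactly the expansion quoted in the proof of Lemma~\ref{lambdaest}, coming from \cite{MNP}). Concretely, the logarithmic capacity of a square of side $\ell$ is a fixed constant times $\ell$, so $\cp(S_k) = c_{\square}\,2^{-k}$ for an explicit $c_\square$, and the domain-perturbation formula of \cite{MNP} for $\lambda_1(\T^2\backslash \eps A)$ gives
\begin{equation*}
\lambda_1(\T^2\backslash S_k) = \frac{2\pi}{\log(1/\cp(S_k))}\,[1+o(1)] = \frac{2\pi}{k\log 2 - \log c_\square}\,[1+o(1)],
\end{equation*}
and expanding the denominator for large $k$ yields $\dfrac{2\pi}{k\log 2} + O(1/k^2)$, which is precisely \eqref{e5.69}. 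The only work is to upgrade the $[1+o(1)]$ in the first-order expansion to an explicit $O(1/k^2)$ remainder in \eqref{e5.69}.

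First I would set up the comparison by domain monotonicity. Inscribe in $S_k$ a disc $B^-_k$ of radius $r^-_k = 2^{-k}/2$ and circumscribe about $S_k$ a disc $B^+_k$ of radius $r^+_k = 2^{-k}\sqrt{2}/2$; then $B^-_k \subset S_k \subset B^+_k$, hence $\T^2\backslash B^+_k \subset \T^2\backslash S_k \subset \T^2\backslash B^-_k$, and by monotonicity of the principal Dirichlet eigenvalue under domain inclusion,
\begin{equation*}
\lambda_1(\T^2\backslash B^+_k) \leq \lambda_1(\T^2\backslash S_k) \leq \lambda_1(\T^2\backslash B^-_k).
\end{equation*}
Thus it suffices to establish \eqref{e5.69} with $S_k$ replaced by either disc, i.e. to obtain an expansion with explicit second-order control for the eigenvalue of the torus with a small disc removed. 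For a disc the geometry is radially symmetric, so the first eigenfunction can be analysed much more explicitly (matched asymptotics: logarithmic profile near the hole, first Neumann mode away from it), and the classical derivation of the $\log(1/\eps)$ law in \cite{MNP} can be tracked to produce the error term $O(1/(\log(1/r))^2)$, which for $r = r^\pm_k \asymp 2^{-k}$ becomes $O(1/k^2)$ as required. Since the two radii $r^-_k$ and $r^+_k$ differ only by a bounded multiplicative constant $\sqrt 2$, the two bounds $\lambda_1(\T^2\backslash B^\pm_k)$ agree to the stated order $O(1/k^2)$, and \eqref{e5.69} follows by squeezing.

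The main obstacle is making the error term in the $\log$-capacity expansion \emph{quantitative}: the reference \cite{MNP} states the asymptotics with a $[1+o(1)]$, whereas \eqref{e5.69} requires an explicit $O(1/k^2)$. This can be handled either by redoing the matched-asymptotic expansion for the punctured torus one order further — writing the first eigenvalue as $\mu$, the eigenfunction as approximately $A + B\log|x-x_0|$ near the removed disc glued to the constant away from it, and solving the resulting transcendental equation for $\mu$ to show $\mu = 2\pi/\log(1/r) + O(1/(\log(1/r))^2)$ — or, more cleanly, by invoking a quantitative strong-resolvent/crushed-ice estimate (e.g. the bounds of Rauch--Taylor or Flucher type) that come with explicit remainder control. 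A mild additional point is to identify the constant $c_\square$ in $\cp(S_k) = c_\square 2^{-k}$; however, because it contributes only an additive $\log c_\square$ to the denominator, its precise value is irrelevant — it is absorbed into the $O(1/k^2)$ error after the expansion of $1/(k\log 2 - \log c_\square)$ — so one need only know $c_\square \in (0,\infty)$, which follows from scaling of logarithmic capacity. The remaining steps (verifying $B^-_k \subset S_k \subset B^+_k$, the monotonicity inequality, and the final Taylor expansion of the reciprocal of an affine function of $k$) are routine.
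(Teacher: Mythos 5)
Your proposal is correct and follows essentially the same route as the paper: squeeze $S_k$ between an inscribed and a circumscribed disc, use domain monotonicity of $\lambda_1$, and reduce to a second-order expansion of $\lambda_1(\T^2\backslash D_\epsilon)$ with remainder $O([\log(1/\epsilon)]^{-2})$. The quantitative disc estimate you identify as the ``main obstacle'' need not be re-derived by matched asymptotics: it is exactly Theorem~1 of Ozawa \cite{O}, which the paper invokes directly (extending it to all $0<\epsilon\le\tfrac12$, hence all $k\in\N$, by monotonicity and continuity in $\epsilon$ at the cost of enlarging the constant).
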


\begin{proof}
By \cite[Theorem 1]{O} we have that, for any disc $D_{\epsilon}\subset \T^2$ with
radius $\epsilon$,
\begin{equation*}
\lambda_1(\T^2\backslash D_{\epsilon})
= \frac{2\pi}{\log(1/{\epsilon})} + O\big([\log(1/\epsilon)]^{-2}\big),\qquad
\epsilon \downarrow 0.
\end{equation*}
This implies, by monotonicity and continuity of $\epsilon\mapsto\lambda_1(\T^2\backslash
D_{\epsilon})$, the existence of $C'<\infty$ such that
\begin{equation}
\label{e5.71}
\left|\lambda_1(\T^2\backslash D_{\epsilon})-\frac{2\pi}{\log(1/{\epsilon})}\right|
\leq C'[\log(1/\epsilon)]^{-2},\qquad 0< \epsilon \leq \tfrac12.
\end{equation}
For $S_k\subset \T^2$ there exist two discs $D_1$ and $D_2$, with the same centre and
radii $2^{-k-1}$ and $2^{-k-1}\sqrt{2}$, such that $D_1\subset S_k\subset D_2$. Hence
$\lambda_1(\T^2 \setminus D_2) \leq \lambda_1(\T^2 \setminus S_k) \leq \lambda_1(\T^2
\setminus D_1)$, and \eqref{e5.69} follows by applying \eqref{e5.71} with $\epsilon=2^{-k-1}$
and $\epsilon=2^{-k-1}\sqrt{2}$, respectively.
\end{proof}

\begin{lemma}
\label{pklem2}
\begin{equation*}
\int_{\T^2} dx\,\,\P_x(S_k\ \textup{is a good $k$-square}) = p_k(t)-p_{k-1}(t).
\end{equation*}
\end{lemma}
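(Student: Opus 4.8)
The plan is to unwind the definitions and integrate over the starting point $x\in\T^2$, exploiting translation invariance of the Brownian motion on the torus. Recall that $S_k$ is a fixed $k$-square, contained in a unique $(k-1)$-square which I will call $S_{k-1}$; by definition, the event ``$S_k$ is a good $k$-square'' is $\{\beta[0,t]\cap S_k=\emptyset\}\cap\{\beta[0,t]\cap S_{k-1}\neq\emptyset\}$, i.e.\ the path avoids the small square but does enter the surrounding larger square. Decomposing the avoidance event according to whether or not the larger square is hit, I would write
\begin{equation*}
\P_x\big(S_k\text{ is a good }k\text{-square}\big)
= \P_x\big(\beta[0,t]\cap S_k=\emptyset\big) - \P_x\big(\beta[0,t]\cap S_{k-1}=\emptyset\big),
\end{equation*}
where the subtracted term uses $S_{k-1}\supset S_k$, so that $\{\beta[0,t]\cap S_{k-1}=\emptyset\}\subset\{\beta[0,t]\cap S_k=\emptyset\}$ and the set difference is exactly the good-square event.

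Next I would integrate this identity over $x\in\T^2$ against Lebesgue measure. The key observation is that for any fixed square $S$ (of side $2^{-j}$, say), the quantity $\int_{\T^2}dx\,\P_x(\beta[0,t]\cap S=\emptyset)$ does not depend on the location of $S$ within $\T^2$, only on its shape and size: by translation invariance of Brownian motion on $\T^2$, shifting $S$ by a vector $a$ is the same as shifting the starting point by $-a$, and the latter shift is absorbed by the integral over all of $\T^2$. Hence
\begin{equation*}
\int_{\T^2}dx\,\P_x\big(\beta[0,t]\cap S_j=\emptyset\big)
= \int_{\T^2}dx\,\P_x\big(\beta[0,t]\cap S=\emptyset\big)
\quad\text{for any $j$-square }S,
\end{equation*}
and the right-hand side equals $p_j(t)$, since $p_j(t)=\P(\beta[0,t]\cap S_j=\emptyset)$ with $\beta$ started at $0$ is the same as $\int_{\T^2}dx\,\P_x(\beta[0,t]\cap(S_j-x)=\emptyset)=\int_{\T^2}dx\,\P_x(\beta[0,t]\cap S_j'=\emptyset)$ after relabelling, $S_j'$ being the translate of $S_j$ through $-x$. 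Applying this with $j=k$ to the first term and $j=k-1$ to the second gives $\int_{\T^2}dx\,\P_x(S_k\text{ is good})=p_k(t)-p_{k-1}(t)$, as claimed.

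There is no real obstacle here; the only point requiring a line of care is the translation-invariance step, namely checking that integrating $\P_x$ over all starting points makes the answer insensitive to where the square sits. This is immediate from the fact that the law of $(\beta(s))_{s\in[0,t]}$ under $\P_x$ is the image of its law under $\P_0$ under the translation $y\mapsto y+x$ on $\T^2$, so that $\P_x(\beta[0,t]\cap S=\emptyset)=\P_0(\beta[0,t]\cap(S-x)=\emptyset)$, and then $\int_{\T^2}dx\,\P_0(\beta[0,t]\cap(S-x)=\emptyset)$ is, by the change of variables $x\mapsto x+c$, independent of any translation of $S$. One should also note in passing that the measurability of $\{\beta[0,t]\cap S=\emptyset\}$ and the joint measurability in $(x,\omega)$ are standard, since $\beta[0,t]$ is a.s.\ a compact set and $S$ is open, so that avoidance of $S$ is a closed condition in the uniform topology on paths.
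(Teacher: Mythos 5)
Your proof is correct and follows essentially the same route as the paper: writing the good-square event as the set difference of the nested avoidance events for $S_k$ and its parent $(k-1)$-square, and then integrating over the starting point, with translation invariance identifying the integrals with $p_k(t)$ and $p_{k-1}(t)$. The paper's version is just a terser statement of the same decomposition, handling the integration implicitly through the convention $\P = \int_{\T^2}dx\,\P_x$.
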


\begin{proof}
Let $E_k$ be the event that $S_k$ is not hit. Since $S_k$ is a good $k$-square
if and only if the event $E_k \cap E_{k-1}^c$ occurs, the lemma follows because
$E_{k-1}\subset E_k$.
\end{proof}

\medskip\noindent
{\bf 5.}
We are now ready to estimate $\E(D^2_t)$. By \eqref{e5.66} and Lemma~\ref{pklem2},
\begin{equation}
\label{e5.73}
\E\left(D^2_t\right)
\leq 8 \sum_{k\in \N} 2^{-2k} \int_{\T^2} dx\,\,\P_x(S_k \textup{ is a good $k$-square})
= 8 \sum_{k\in\N} 2^{-2k}[p_k(t)-p_{k-1}(t)]
= 6 \sum_{k\in\N} 2^{-2k}p_k(t),
\end{equation}
where $p_0(t)=0$. In order to bound this sum from above we consider the contributions coming
from $k=1,\ldots K$ and $k=K+1,\ldots, \lfloor \tfrac14 t^{1/2}\rfloor$ and $k>\lfloor \tfrac14 t^{1/2}\rfloor$,
respectively, where $\lfloor\cdot\rfloor$ denotes the integer part, and we choose
\begin{equation}
\label{e5.74}
K =\lfloor (C\log 2)/\pi \rfloor
\end{equation}
with $C$ the constant in \eqref{e5.69}. Since
\begin{equation}
\label{e5.75}
\sum_{k=1}^K 2^{-2k} p_k(t) \leq \sum_{k=1}^K 2^{-2k} p_K(t)
\leq e^{-t\lambda_1(\T^2\backslash S_K)},
\end{equation}
the first contribution is exponentially small in $t$. For $k=K+1,\ldots,\lfloor \tfrac14 t^{1/2}\rfloor$
we have $C/k^2 \leq \pi/k\log 2$, and hence by Lemmas~\ref{pklem1}--\ref{lam1},
\begin{align}
\label{e5.76}
\sum_{k=K+1}^{\lfloor \tfrac14 t^{1/2}\rfloor} 2^{-2k}p_k(t)
\leq \sum_{k=K+1}^{\lfloor \tfrac14 t^{1/2}\rfloor} 2^{-2k} e^{-\frac{\pi t}{k\log 2}}
\leq \sum_{k=K+1}^{\lfloor \tfrac14 t^{1/2}\rfloor} 2^{-2k} e^{-\frac{4\pi t^{1/2}}{\log 2}}
=O(e^{-4\pi t^{1/2}}),
\end{align}
and so the second contribution is $o(t^{1/4}e^{-4(\pi t)^{1/2}})$. Finally, for $k>\lfloor \tfrac14
t^{1/2}\rfloor$ we have $e^{Ct/k^2} \le e^{16C}$, and hence
\begin{align}
\label{e5.77}
\sum_{k>\lfloor \tfrac14 t^{1/2}\rfloor} 2^{-2k}\, p_k(t)
\leq e^{16C} \sum_{k>\lfloor \tfrac14 t^{1/2}\rfloor} e^{-2k\log 2-\frac{2\pi t}{k\log 2}}.
\end{align}
The summand is increasing for $1\leq k \leq (\pi t)^{1/2}/\log 2$ and decreasing for $k \geq
(\pi t)^{1/2}/\log 2$. Moreover, it is bounded from above by $e^{-4(\pi t)^{1/2}}$. We conclude
that for $t \to \infty$,
\begin{align}
\label{e5.78}
&\sum_{k>\lfloor \tfrac14 t^{1/2}\rfloor } e^{-2k\log 2-\frac{2\pi t}{k\log 2}}
\leq 2\,e^{-4(\pi t)^{1/2}} + \int_{[0,\infty)} dk\,e^{-2k\log 2-\frac{2\pi t}{k\log 2}}\nonumber\\
&= 2\,e^{-4(\pi t)^{1/2}} + \frac{(4\pi t)^{1/2}}{\log 2} K_1\big(4(\pi t)^{1/2}\big)
= \frac{\pi^{3/4}}{\sqrt{2}\log 2}\,t^{1/4}\,e^{-4(\pi t)^{1/2}}[1+o(1)],
\end{align}
where we use formula 3.324.1 from \cite{GR} and formula 9.7.2 from \cite{AS}. Putting the
estimates in \eqref{e5.63} and \eqref{e5.73}--\eqref{e5.78} together, we obtain that
\begin{equation*}
\spadesuit(t) \leq \frac{96\pi^{3/4}\,e^{16C}}{\sqrt{2}\log 2}\,t^{1/4}\,e^{-4(\pi t)^{1/2}}[1+o(1)].
\end{equation*}
This is the desired upper bound in \eqref{e5}.

\medskip\noindent
{\bf 6.} To obtain a lower bound for $\E(D^2_t)$, we consider a
good $k$-square. This square contains a square with the same
centre, parallel sides and area $2^{-2k-2}$. The distance from
this square to $\beta[0,t]$ is bounded from below by $2^{-k-2}$.
Hence
\begin{equation}
\begin{aligned}
\label{e5.80}
\E\left(D^2_t\right)
&\geq \tfrac{1}{16} \sum_{k\in \N} 2^{-2k} \int_{\T^2} dx\,\,
\P_x(S_k \textup{is a good $k$-square})\\
&= \tfrac{1}{16} \sum_{k\in\N} 2^{-2k}\,[p_k(t)-p_{k-1}(t)]
=\tfrac{3}{64} \sum_{k\in\N} 2^{-2k}\,p_k(t),
\end{aligned}
\end{equation}
since $p_0(t)=0$. The following lemma provides a lower bound for the right-hand side of
\eqref{e5.80}.

\begin{lemma}
\label{bbl}
There exists $k_0\in \N$ such that for all $k\ge k_0$,
\begin{equation*}
p_k(t) \geq \tfrac14 e^{-t\lambda_1(\T^2\backslash S_k)}.
\end{equation*}
\end{lemma}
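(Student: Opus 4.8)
The plan is to establish the reverse of Lemma~\ref{pklem1} up to a multiplicative constant $\tfrac14$. The upper bound $p_k(t)\le e^{-t\lambda_1(\T^2\backslash S_k)}$ came from the eigenfunction expansion together with Parseval, with losses only in the estimate $\left(\int_{\T^2\backslash S_k}\varphi_j\right)^2\le |\T^2\backslash S_k|\le 1$ for $j\ge 2$ and in discarding the higher modes; for a matching lower bound I would isolate the ground-state contribution and control it from below. Concretely, from \eqref{a9},
\begin{equation*}
p_k(t) = \int_{\T^2\backslash S_k}dx\int_{\T^2\backslash S_k}dy\,\,p_{\T^2\backslash S_k}(x,y;t)
\geq e^{-t\lambda_1(\T^2\backslash S_k)}\left(\int_{\T^2\backslash S_k}\varphi_1\right)^2,
\end{equation*}
since all cross terms $\sum_{j\ge 2}e^{-t\lambda_j}(\int\varphi_j)^2$ are nonnegative. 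So it suffices to show $\left(\int_{\T^2\backslash S_k}\varphi_1\right)^2\ge \tfrac14$ for all $k\ge k_0$, where $\varphi_1=\varphi_1^{(k)}$ is the $L^2$-normalised positive ground state of $\T^2\backslash S_k$. Equivalently, writing $\|\varphi_1\|_1=\int_{\T^2\backslash S_k}|\varphi_1|$, I must show $\|\varphi_1\|_1\ge\tfrac12$ for large $k$.

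The mechanism is that as $k\to\infty$ the hole $S_k$ shrinks to a point, $\lambda_1(\T^2\backslash S_k)\to 0$ by Lemma~\ref{lam1}, and the ground state converges to the constant function $1$ (the ground state of $\T^2$ itself). To make this quantitative I would use the $L^\infty$ bound from Lemma~\ref{lemlb*}: since $M=\T^2$ is stochastically complete and $\sup_x p_{\T^2}(x,x;s)<\infty$ for every $s>0$, Lemma~\ref{lemlb*} gives $\|\varphi_1\|_{L^\infty(\T^2\backslash S_k)}^2\le e\sup_x p_{\T^2}(x,x;\lambda_1(\T^2\backslash S_k)^{-1})$. Because $\lambda_1(\T^2\backslash S_k)^{-1}\to\infty$ and $p_{\T^2}(x,x;s)\to 1$ uniformly in $x$ as $s\to\infty$ (spectral expansion on the compact torus, the zero eigenvalue with constant eigenfunction dominating), we get $\|\varphi_1\|_{L^\infty}^2\le e(1+o(1))$, hence $\|\varphi_1\|_\infty\le C_0$ for some absolute constant and all $k\ge k_0$. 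Then on the complement of any fixed small ball containing $S_k$ the heat kernel comparison combined with this uniform $L^\infty$ control forces $\varphi_1$ to be close to its torus average; a clean way to package this is: $1=\int_{\T^2\backslash S_k}\varphi_1^2\le\|\varphi_1\|_\infty\|\varphi_1\|_1$, which already gives $\|\varphi_1\|_1\ge 1/C_0$, but $C_0$ is larger than $2$, so I need the sharper statement that $\|\varphi_1\|_\infty\to 1$. For that, note $\varphi_1\le e^{t_0\lambda_1}\,[\,$heat semigroup at time $t_0$ applied to $\varphi_1\,]$ and let $t_0\to\infty$ together with $k\to\infty$ so that $\lambda_1 t_0\to 0$; since $\int\varphi_1=\|\varphi_1\|_1\le|\T^2|^{1/2}=1$, the $t_0\to\infty$ limit of the torus heat semigroup applied to $\varphi_1$ is the constant $\int_{\T^2\backslash S_k}\varphi_1\le 1$, giving $\|\varphi_1\|_\infty\le 1+o(1)$, and then $1\le\|\varphi_1\|_\infty\|\varphi_1\|_1$ yields $\|\varphi_1\|_1\ge 1-o(1)\ge\tfrac12$ for $k\ge k_0$. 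This gives $\left(\int\varphi_1\right)^2=\|\varphi_1\|_1^2\ge\tfrac14$ and hence the claim.

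The main obstacle is the quantitative convergence $\|\varphi_1^{(k)}\|_{L^\infty}\to 1$: one must rule out the ground state developing a spike near the shrinking hole, which is exactly where Lemma~\ref{lemlb*} is doing the work, but coupling the auxiliary time $t_0$ correctly with $k$ (so that simultaneously $\lambda_1 t_0\to 0$, to kill the exponential prefactor, and $t_0\to\infty$, to smooth $\varphi_1$ out to its average on $\T^2$) requires the rate $\lambda_1(\T^2\backslash S_k)\asymp 1/k$ from Lemma~\ref{lam1}; any $t_0=t_0(k)$ with $k\ll t_0(k)\ll\infty$, e.g. $t_0(k)=k^2$, works. Everything else — nonnegativity of the discarded modes, Cauchy–Schwarz, and the uniform decay of $p_{\T^2}(x,x;s)-1$ as $s\to\infty$ — is routine.
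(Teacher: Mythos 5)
Your first step---discarding the modes $j\ge 2$ in the eigenfunction expansion to get $p_k(t)\ge e^{-t\lambda_1(\T^2\backslash S_k)}\bigl(\int_{\T^2\backslash S_k}\varphi_1\bigr)^2$ and reducing the lemma to $\int_{\T^2\backslash S_k}\varphi_1\ge\tfrac12$ for large $k$---is exactly the paper's. Where you diverge is in how that last bound is obtained: the paper simply cites Ozawa \cite{O} for $\lVert\varphi_1-1\rVert_{L^2(\T^2\backslash S_k)}\to 0$, whence $\int\varphi_1\to 1$; you instead combine $1=\int\varphi_1^2\le\lVert\varphi_1\rVert_{L^\infty}\lVert\varphi_1\rVert_{L^1}$ with an $L^\infty$ bound on $\varphi_1$ coming from heat-kernel domination $p_{\T^2\backslash S_k}\le p_{\T^2}$ at an auxiliary time $t_0(k)$. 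This is a legitimate and essentially self-contained alternative: it needs only $\lambda_1(\T^2\backslash S_k)\to 0$ rather than Ozawa's eigenfunction asymptotics, and it is quantitative where the paper's step is a citation. Two corrections, neither fatal. First, your parenthetical claim that Lemma~\ref{lemlb*} alone gives a constant ``larger than $2$'' is wrong in your favour: it gives $\lVert\varphi_1\rVert_{L^\infty}^2\le e\sup_x p_{\T^2}(x,x;\lambda_1^{-1})=e\,(1+o(1))$, i.e.\ $\lVert\varphi_1\rVert_{L^\infty}\le\sqrt{e}\,(1+o(1))<2$, so $\lVert\varphi_1\rVert_{L^1}\ge e^{-1/2}(1-o(1))>\tfrac12$ already and the whole refinement via $t_0$ is unnecessary. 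Second, in that refinement your coupling of $t_0$ with $k$ is inverted: to have simultaneously $t_0\to\infty$ and $\lambda_1 t_0\to 0$ with $\lambda_1\asymp 1/k$ you need $1\ll t_0(k)\ll k$ (e.g.\ $t_0(k)=\sqrt{k}$), whereas your stated range $k\ll t_0(k)$ and your example $t_0(k)=k^2$ make the prefactor $e^{t_0\lambda_1}$ blow up. With either of these fixes the argument closes.
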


\begin{proof}
By the eigenfunction expansion in \eqref{a9} we have that
\begin{align*}
p_k(t) &=\int_{\T^2\backslash S_k} dx \int_{\T^2\backslash S_k} dy\,
\sum_{j\in\N} e^{-t\lambda_j(\T^2\backslash S_k)} \varphi_j(x)\varphi_j(y)\nonumber\\
&\geq e^{-t\lambda_1(\T^2\backslash S_k)}
\left(\int_{\T^2\backslash S_k} dx\,\varphi_1(x)\right)^2.
\end{align*}
By the results of \cite{O}, $\Vert \varphi_1-1\Vert_{L^2(\T^2\backslash S_k)}\ra 0$ as
$k\to\infty$. This implies that $|\int_{\T^2\backslash S_k} dx\,\varphi_1(x)| \geq \tfrac12$ for
$k$ sufficiently large.
\end{proof}

\noindent
Combining \eqref{e5.69}, \eqref{e5.73}, \eqref{e5.80} and Lemma \ref{bbl}, we have that
\begin{equation*}
\E\left(D^2_t\right)
\geq \tfrac{3}{256} \sum_{\{k\in \N\colon\,k\ge k_0\}}
e^{-2k\log 2-\frac{2\pi t}{k\log 2}-\frac{Ct}{k^2}}.
\end{equation*}
Now let $t$ be such that $\pi t/\log 2>k_0$. Then
\begin{align*}
\E\left(D^2_t\right)
&\geq \tfrac{3}{256} \sum_{\big\{k\in \N\colon\,k\geq \frac{(\pi t)^{1/2}}{\log 2}\big\}}
e^{-2k\log 2-\frac{2\pi t}{k\log 2}-\frac{Ct}{k^2}}\nonumber\\
&\geq \tfrac{3}{256}\,e^{-C} \sum_{\big\{k\in \N\colon\,k\geq \frac{(\pi t)^{1/2}}{\log 2}\big\}}
e^{-2k\log 2-\frac{2\pi t}{k\log 2}}.
\end{align*}
Because the summand is strictly decreasing in $k$, we can replace the sum over $k$ by
an integral with a minor correction. This gives
\begin{align}
\label{e5.85}
\E\left(D^2_t\right) \geq
\tfrac{3}{256}\, e^{-C} \left(\int_{\frac{(\pi t)^{1/2}}{\log 2}}^{\infty} dk\,
e^{-2k\log 2-\frac{2\pi t}{k\log 2}} - e^{-4(\pi t)^{1/2}}\right).
\end{align}
We have
\begin{align}
\label{e5.86}
&\int_{\frac{(\pi t)^{1/2}}{\log 2}}^{\infty} dk\,e^{-2k\log2-\frac{2\pi t}{k\log 2}}
= \frac{(\pi t)^{1/2}}{\log 2} \int_1^{\infty} dx\,e^{-2(\pi t)^{1/2}(x+\frac{1}{x})}
\geq \frac{(\pi t)^{1/2}}{\log 4} \int_0^{\infty}dx\, e^{-2(\pi t)^{1/2}(x+\frac{1}{x})}\nonumber\\
&= \frac{(\pi t)^{1/2}}{\log 2} K_1\big(4(\pi t)^{1/2}\big)
= \frac{\pi^{3/4}}{2^{3/2}\log 2}t^{1/4}e^{-4(\pi t)^{1/2}}[1+o(1)],
\end{align}
where we use once more formulas 3.324.1 from \cite{GR} and 9.7.2 from \cite{AS}.
Combining \eqref{e5.65}, \eqref{e5.85} and \eqref{e5.86}, we get
\begin{equation*}
\spadesuit(t) \geq \frac{3\pi^{3/4}\,e^{-C}}{2^{23/2}\log 2}\,t^{1/4}\,e^{-4(\pi t)^{1/2}}[1+o(1)].
\end{equation*}
This is the desired lower bound in \eqref{e5}.
\end{proof}


\section{Torsional rigidity for $m=3$}
\label{proofthe2}

It is well known that $\beta[0,1]$ has a strictly positive Newton capacity when $m=3$.
In Section~\ref{invCAP} we show that the inverse of the capacity of $\beta[0,1]$ on $\R^3$
has a finite exponential moment. In Section~\ref{linkPDECAP} we show that for every closed
set $K \subset \T^3$ that has a small enough diameter the principal Dirichlet eigenvalue
of $\T^3 \backslash K$ is bounded from below by a constant times the capacity of $K$.
(The same is true for $m \geq 4$, a fact that will be needed in Section~\ref{proofthe3}.) In
Section~\ref{secproofthe2} we use these results to prove Theorem~\ref{the2}.


\subsection{Exponential moment of the inverse capacity}
\label{invCAP}

\begin{lemma}
\label{lemcapexp}
Let $m=3$. Then there exists $c > 0$ such that
\begin{equation*}
\E\left(\exp\left[ \frac{c}{\cp(\beta[0,1])}\right]\right) < \infty.
\end {equation*}
\end{lemma}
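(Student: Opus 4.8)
The plan is to reduce the problem, exactly as in the $m=2$ case (Lemma~\ref{lambdaest}), to a comparison of $\cp(\beta[0,1])$ with the Newtonian capacity of a single small segment, and then to bound this capacity below by a simple functional of the Brownian path whose negative moments are easy to control. The key point in $\R^3$ is that, unlike in $\R^2$, the capacity of a line segment of length $\ell$ does not vanish logarithmically but satisfies $\cp(L_\ell) = c\,\ell\,[1+o(1)]$ as $\ell\downarrow 0$ for a constant $c>0$, and more importantly $\cp(L_\ell)\geq c'\ell$ for all $0<\ell\leq 1$ for some $c'>0$ (the straight segment of length $\ell$ contains, up to translation, no ball, so one cannot use the trivial lower bound $\cp\geq\cp(\text{ball})$; instead one uses the explicit or scaling form of segment capacity together with monotonicity). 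Monotonicity of Newtonian capacity then gives
\begin{equation}
\label{capseg}
\cp(\beta[0,1]) \geq \cp\big(L_{|\beta(1)|}\big) \geq \cp\big(L_{1\wedge|\beta(1)|}\big) \geq c'\,\big(1\wedge|\beta(1)|\big),
\end{equation}
since $\beta[0,1]$ contains a segment joining $\beta(0)=0$ to $\beta(1)$, hence a translate of $L_{|\beta(1)|}$.

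Granting \eqref{capseg}, the exponential moment bound follows by a direct computation:
\begin{equation*}
\E_0\left(\exp\left[\frac{c}{\cp(\beta[0,1])}\right]\right)
\leq \E_0\left(\exp\left[\frac{c}{c'\,(1\wedge|\beta(1)|)}\right]\right)
\leq e^{c/c'} + \E_0\left(\exp\left[\frac{c}{c'\,|\beta(1)|}\right]\,1_{\{|\beta(1)|\leq 1\}}\right),
\end{equation*}
and the last expectation is
\begin{equation*}
\int_{\R^3} e^{c/(c'|x|)}\,\frac{1}{(2\pi)^{3/2}}\,e^{-|x|^2/2}\,dx
= \frac{4\pi}{(2\pi)^{3/2}}\int_0^1 e^{c/(c'r)}\,e^{-r^2/2}\,r^2\,dr + (\text{harmless tail}),
\end{equation*}
which is finite because, although $e^{c/(c'r)}$ blows up as $r\downarrow 0$, the integral $\int_0^1 r^2 e^{c/(c'r)}\,dr$ still diverges — so in fact one must be slightly more careful here than in the $m=2$ case. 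The resolution is that in $\R^3$ one does \emph{not} need the full integrability of $e^{1/|\beta(1)|}$: one may instead use a \emph{better} lower bound on $\cp(\beta[0,1])$ that does not degenerate as $\beta(1)\to 0$. Specifically, condition on the event $\{|\beta(1)|\leq 1\}$ being small and use that $\beta[0,1]$ typically has diameter of order $1$; equivalently, replace $|\beta(1)|$ in \eqref{capseg} by $\max_{s\in[0,1]}|\beta(s)|$ (the path contains a segment from $0$ to the point realising this maximum), and note $\max_{s\in[0,1]}|\beta(s)| \geq \max_{s\in[0,1]}|\beta_1(s)|$, a one-dimensional Brownian maximum, whose density near $0$ is bounded. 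Then $\E_0(\exp[c/\max_s|\beta(s)|])<\infty$ for $c$ small, since $\P_0(\max_s|\beta(s)|\leq\delta)\leq\P_0(\max_s|\beta_1(s)|\leq\delta)\lesssim e^{-\pi^2/(8\delta^2)}$ decays like a Gaussian in $1/\delta$, which beats $e^{c/\delta}$.

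The main obstacle is therefore not the exponential-moment computation itself but establishing the clean lower bound $\cp(L_\ell)\geq c'\ell$ for segments (or the equivalent statement for the path's diameter), and then choosing the right path functional so that the resulting small-ball probability decays fast enough — a Gaussian-type decay $e^{-c/\delta^2}$ in the radius $\delta$ rather than merely polynomial — to absorb $e^{c/\delta}$. Once the segment capacity estimate and the small-ball estimate $\P_0(\mathrm{diam}(\beta[0,1])\leq\delta)\leq e^{-c''/\delta^2}$ are in hand, the result is immediate by splitting the expectation at a fixed threshold and dominating the small-$\delta$ part by a convergent integral.
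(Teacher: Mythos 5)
Your proposal breaks down at its very first step: the inequality $\cp(L_\ell)\geq c'\ell$ is false for the Newtonian capacity in $\R^3$. A straight line segment is polar for three-dimensional Brownian motion (hitting it would force the two-dimensional Brownian motion obtained by projecting onto the plane orthogonal to the segment to hit a point), so $\cp(L_\ell)=0$ for every $\ell$. The comparison $\cp(\beta[0,1])\geq\cp(L_{|\beta(1)|})$ used in Lemma~\ref{lambdaest} is specific to $m=2$, where the \emph{logarithmic} capacity of a continuum is bounded below by a quarter of its diameter; no analogue holds for Newtonian capacity in $\R^3$, and indeed there is no deterministic lower bound on the Newtonian capacity of a continuum in terms of its diameter (the segment itself is the counterexample). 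This also kills your proposed repair via $\max_{s\in[0,1]}|\beta(s)|$ or $\mathrm{diam}(\beta[0,1])$ together with a Gaussian small-ball estimate: knowing the diameter of $\beta[0,1]$ gives no lower bound at all on $\cp(\beta[0,1])$, so the quantity you would be taking exponential moments of does not dominate $1/\cp(\beta[0,1])$. The positivity of $\cp(\beta[0,1])$ in $\R^3$ is a genuinely two-dimensional phenomenon of the path, not a consequence of its connectedness or extent.

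The paper's proof goes through the energy (variational) characterization
\begin{equation*}
\frac{1}{\cp(A)}=\inf\left[\int_{\R^3}\int_{\R^3}\frac{\mu(dx)\,\mu(dy)}{4\pi|x-y|}\colon\,
\mu\text{ a probability measure on }A\right],
\end{equation*}
takes the occupation measure of $\beta[0,1]$ as test measure, and thereby reduces the lemma to showing
\begin{equation*}
\E_0\left(\exp\left[c\int_0^1\!\!\int_0^1\frac{ds\,dt}{|\beta(s)-\beta(t)|}\right]\right)<\infty
\end{equation*}
for small $c>0$, which is then proved by expanding the exponential and iterating conditional expectations (a Khas'minskii-type bound, as in Donsker--Varadhan). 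Some quantitative control of this self-intersection-type energy, rather than of the path's endpoints or diameter, is unavoidable here, so your outline cannot be patched without changing the approach entirely.
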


\begin{proof}
We use the fact that, for any compact set $A\subset\R^3$,
\begin{equation}
\label{caprepr}
\frac{1}{\cp(A)}  = \inf \left[ \int_{\R^3} \int_{\R^3} \frac{\mu(dx)\mu(dy)}{4\pi \left\vert x-y\right\vert}\colon\,
\mu \text{ is a probability measure on } A\right].
\end{equation}
As test probability measure we choose the sojourn measure of $\beta[0,t]$, that is
\begin{equation*}
\mu_{\beta[0,1]}(C) = \int_0^1 1_C( \beta(t))\,dt, \qquad C \subset \R^3,
\end{equation*}
for which
\begin{equation*}
\int_{\R^3} \int_{\R^3} \frac{\mu_{\beta[0,1]}(dx)\mu_{\beta[0,1]}(dy)}{4\pi \left\vert x-y\right\vert}
= \int_0^1 ds \int_0^1 dt\,\,\frac{1}{4\pi \left\vert \beta(s)-\beta(t)\right\vert}.
\end{equation*}
It therefore suffices to prove that
\begin{equation*}
\E_0\left(\exp\left[c\int_0^1 ds \int_0^1 dt\,\,\frac{1}{\left\vert \beta(s)-\beta(t)\right\vert}\right]\right)
< \infty
\end{equation*}
for small enough $c>0$. A proof of this fact is hidden in \cite{DVPolaron}. For the
convenience of the reader we write it out here.

By Cauchy-Schwarz and Jensen, we have that
\begin{align*}
&\E_0\left(\exp\left[c\int_0^1 ds \int_0^1 dt\,\,
\frac{1}{\left\vert \beta(s)-\beta(t)\right\vert}\right]\right)
\leq \E_0\left(\exp\left[2c \int_0^1 ds \int_s^1 dt\,\,
\frac{1}{\left\vert \beta(s)-\beta(t)\right\vert}\right]\right) \nonumber \\
&\leq \E_0\left(\exp\left[2c \int_0^1 ds \int_s^{1+s} dt\,\,
\frac{1}{\left\vert \beta(s)-\beta(t)\right\vert}\right]\right)
\leq \int_0^1 ds\,\,\E_0\left(\exp\left[2c \int_s^{1+s} dt\,\,
\frac{1}{\left\vert \beta(s)-\beta(t)\right\vert}\right]\right) \nonumber \\
&= \E_0\left(\exp\left[2c \int_0^1 dt\,\,\frac{1}{\left\vert\beta(t)\right\vert }\right]\right).
\end{align*}
It therefore suffices to prove that the right-hand side is finite for small enough $c>0$.
Expanding the exponent, we get
\begin{align*}
\E_0\left(\exp\left[2c \int_0^1 dt\,\,\frac{1}{\left\vert\beta(t)\right\vert }\right]\right)
&= \sum_{k\in\N_0} \frac{(2c)^k}{k!}\,\E_0\left(\left[\int_0^1 dt\,\,
\frac{1}{\left\vert\beta(t)\right\vert}\right]^k\right) \nonumber \\
&= \sum_{k\in\N_0} (2c)^k \int_{0 \leq t_1<\cdots<t_k \leq 1}
\E_0\left(\frac{1}{\left\vert\beta(t_1)\right\vert \times\cdots\times
\left\vert\beta(t_k)\right\vert}\right) dt_1 \times \cdots \times dt_k.
\end{align*}
The integrand equals
\begin{equation}
\label{dec1}
\E_0\left(\frac{1}{\left\vert\beta(t_1)\right\vert \times \cdots \times \left\vert\beta(t_{k-1})\right\vert}
\,\E_0\left(\left.  \frac{1}{\left\vert\beta(t_{k-1})+\left[\beta(t_k)-\beta(t_{k-1})\right]
\right\vert}\right\vert \mathcal{F}_{t_{k-1}}\right)\right),
\end{equation}
where $\mathcal{F}_t$ is the sigma-algebra of $\beta$ up to time $t$. However,
\begin{align}
&\E_0\left(\left. \frac{1}{\left\vert \beta(t_{k-1})+\left[\beta(t_k)-\beta(t_{k-1})\right]
\right\vert}\right\vert \mathcal{F}_{t_{k-1}}\right)
= \left. \E_0\left(\frac{1}{\left\vert x+\sqrt{t_{k}-t_{k-1}} \beta(1)\right\vert}\right)
\right\vert_{x=\beta(t_{k-1})} \nonumber \\
&\leq \sup_{x\in\R^3} \E_0\left(\frac{1}{\left\vert x+\sqrt{t_{k}-t_{k-1}}
\beta(1)\right\vert}\right)
\leq \E_0\left(\frac{1}{\left\vert\sqrt{t_{k}-t_{k-1}} \beta(1)\right\vert }\right)
\leq \frac{\gamma}{\sqrt{t_{k}-t_{k-1}}}
\label{dec2}
\end{align}
with $\gamma = \E_0(|\beta(1)|^{-1}) < \infty$, where in the second inequality we use that
$\left\vert x+\beta(1)\right\vert $ is stochastically larger than $\left\vert \beta(1)\right\vert $
for any $x \neq 0$. Iterating \eqref{dec1},\eqref{dec2}, we get
\begin{equation*}
\E_0\left(\frac{1}{\left\vert \beta(t_1)\right\vert \times\cdots\times\left\vert \beta(t_k)\right\vert}\right)
\leq \gamma^k \prod_{i=1}^k \frac{1}{\sqrt{t_{i}-t_{i-1}}},
\end{equation*}
where $t_0=0$. Hence
\begin{align*}
\E_0\left(\exp\left[2c \int_0^1 dt\,\,\frac{1}{\left\vert\beta(t)\right\vert}\right]\right)
&= \sum_{k\in\N_0} (2c)^k \gamma^k \int_{0 \leq t_1<\cdots<t_k \leq 1}
\frac{dt_1}{\sqrt{t_1}} \times \cdots \times \frac{dt_k}{\sqrt{t_k-t_{k-1}}} \nonumber\\
&\leq \sum_{k\in\N_0} (2c)^k \gamma^k \left(\int_0^1 dt\,\,\frac{1}{\sqrt{t}}\right)^k
= \sum_{k\in\N_0} (4c)^k \gamma^k,
\end{align*}
which is finite for $c<1/4\gamma$.
\end{proof}


\subsection{Principal Dirichlet eigenvalue and capacity}
\label{linkPDECAP}

\begin{lemma}
\label{eigcap}
Let $m \geq 3$, and let $K$ be a closed subset of $\T^m$ with $\mathrm{diam}(K)
\leq \tfrac12$. Then
\begin{equation}
\label{i0} \lambda_1(\T^m\backslash K) \geq k_m\,\cp(K),
\end{equation}
where
\begin{equation*}
k_m = \int_0^1 ds\,(4\pi s)^{-m/2}\,e^{-m/4s},
\end{equation*}
and $\cp(K)$ is the Newtonian capacity of $K$ embedded in $\R^m$.
\end{lemma}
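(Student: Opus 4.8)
The plan is to bound $\lambda_1(\T^m\backslash K)$ from below by the probability that Brownian motion hits $K$ within one unit of time, and then to estimate that probability by lifting to $\R^m$ and invoking the equilibrium measure of $K$. First I would reduce the claim. Let $\tau_K$ be the hitting time of $K$ (equivalently the first exit time of $\T^m\backslash K$) and let $v$ be the torsion function of $\T^m\backslash K$, so that $v(x)=\E_x[\tau_K]$ by \eqref{e2}. Since $\cT(\T^m\backslash K)=\int v\le|\T^m\backslash K|\,\|v\|_\infty$, Lemma~\ref{lemlb}(a) gives $\lambda_1(\T^m\backslash K)\ge|\T^m\backslash K|/\cT(\T^m\backslash K)\ge 1/\|v\|_\infty$. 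Putting $q=\inf_{x\in\T^m}\P_x(\tau_K\le 1)$, the Markov property at integer times yields $\P_x(\tau_K>n)\le(1-q)^n$, hence $\|v\|_\infty\le\sup_x\E_x[\tau_K]=\sup_x\int_0^\infty\P_x(\tau_K>t)\,dt\le\sum_{n\ge 0}(1-q)^n=1/q$. Thus $\lambda_1(\T^m\backslash K)\ge q$, and it suffices to prove $q\ge k_m\cp(K)$; I may assume $\cp(K)>0$, as otherwise there is nothing to prove.

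Next I would lift to $\R^m$. Since $\mathrm{diam}(K)\le\tfrac12$, Lemma~\ref{lemlb}(e) furnishes an isometric copy $\hat K\subset\R^m$ of $K$, with $\mathrm{diam}(\hat K)\le\tfrac12$ and $\cp(\hat K)=\cp(K)$. Fix $x\in\T^m$; since the $\T^m$-distance from $x$ to $K$ is at most $\mathrm{diam}(\T^m)=\tfrac12\sqrt m$, I can choose a lift $\hat x\in\R^m$ of $x$ with $\mathrm{dist}_{\R^m}(\hat x,\hat K)\le\tfrac12\sqrt m$. If $B$ is Brownian motion on $\R^m$ from $\hat x$, then its projection to $\T^m$ is Brownian motion from $x$, and the latter exits $\T^m\backslash K$ by time $1$ whenever $B$ hits $\hat K$ by time $1$; hence $\P_x(\tau_K\le 1)\ge\P_{\hat x}(\tau_{\hat K}\le 1)$, where $\tau_{\hat K}$ now denotes the hitting time of $\hat K$ by $B$.

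It remains to bound $\P_{\hat x}(\tau_{\hat K}\le 1)$ from below. For this I would use the Newtonian kernel $G_{\R^m}(x,z)=\int_0^\infty p_{\R^m}(x,z;s)\,ds$ (here $m\ge 3$ enters) and the equilibrium measure $e_{\hat K}$ of $\hat K$, normalised — consistently with \eqref{caprepr} — so that $e_{\hat K}(\hat K)=\cp(\hat K)$ and the hitting probability of $\hat K$ equals $\int_{\hat K}G_{\R^m}(\cdot,z)\,e_{\hat K}(dz)$. By the Markov property at time $1$, together with $\E_{\hat x}[G_{\R^m}(B_1,z)]=\int_1^\infty p_{\R^m}(\hat x,z;s)\,ds$ (Chapman--Kolmogorov and Fubini),
\begin{equation*}
\P_{\hat x}(1<\tau_{\hat K}<\infty)=\E_{\hat x}\!\left[\mathbf 1_{\{\tau_{\hat K}>1\}}\int_{\hat K}G_{\R^m}(B_1,z)\,e_{\hat K}(dz)\right]\le\int_{\hat K}\!\left(\int_1^\infty p_{\R^m}(\hat x,z;s)\,ds\right)e_{\hat K}(dz),
\end{equation*}
so subtracting this from $\P_{\hat x}(\tau_{\hat K}<\infty)=\int_{\hat K}G_{\R^m}(\hat x,z)\,e_{\hat K}(dz)$ gives
\begin{equation*}
\P_{\hat x}(\tau_{\hat K}\le 1)\ \ge\ \int_{\hat K}\!\left(\int_0^1 p_{\R^m}(\hat x,z;s)\,ds\right)e_{\hat K}(dz).
\end{equation*}
For $z\in\hat K$ one has $|\hat x-z|\le\mathrm{dist}_{\R^m}(\hat x,\hat K)+\mathrm{diam}(\hat K)\le\tfrac12\sqrt m+\tfrac12\le\sqrt m$ (using $m\ge 1$), so $\int_0^1 p_{\R^m}(\hat x,z;s)\,ds=\int_0^1(4\pi s)^{-m/2}e^{-|\hat x-z|^2/(4s)}\,ds\ge\int_0^1(4\pi s)^{-m/2}e^{-m/(4s)}\,ds=k_m$. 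Hence $\P_{\hat x}(\tau_{\hat K}\le 1)\ge k_m\,e_{\hat K}(\hat K)=k_m\cp(K)$; as $x$ was arbitrary, $q\ge k_m\cp(K)$, and the proof is complete.

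The step I expect to be the main obstacle is the third one: having on hand, with the normalisation matching \eqref{caprepr}, the representation of the hitting probability of $\hat K$ as the Green potential $\int_{\hat K}G_{\R^m}(\cdot,z)\,e_{\hat K}(dz)$ of the equilibrium measure, and combining it with the Markov property at time $1$ to convert the ``hit eventually'' quantity into the ``hit within time $1$'' lower bound. The reduction, the lift, and the Gaussian lower bound on $p_{\R^m}$ are all routine.
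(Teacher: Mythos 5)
Your argument is sound in outline and, apart from one flawed justification, goes through; but that one step is a genuine error as written. Lemma~\ref{lemlb}(a) states $\cT(\O)\le\lambda_1(\O)^{-1}|\O|$, i.e.\ $\lambda_1(\O)\le|\O|/\cT(\O)$, which is the \emph{reverse} of the inequality $\lambda_1\ge|\O|/\cT(\O)\ge 1/\lVert v\rVert_\infty$ you claim to extract from it, so your opening reduction fails as stated. The inequality you actually need, $\lambda_1(\O)\ge\lVert v_\O\rVert_{L^\infty(\O)}^{-1}$, is true and standard, but needs a different (still short) argument: from $e^{-t\lambda_1(\O)}\varphi_1(x)=\int_\O p_\O(x,y;t)\varphi_1(y)\,dy\le\lVert\varphi_1\rVert_{L^\infty(\O)}\,u_\O(x;t)$, integrate over $t\in(0,\infty)$ and take the supremum over $x$ to get $\lambda_1(\O)^{-1}\le\lVert v_\O\rVert_{L^\infty(\O)}$. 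With that repair, your chain $\lambda_1\ge 1/\lVert v\rVert_\infty\ge q$ with $q=\inf_x\P_x(\tau_K\le 1)$, obtained from the Markov property at integer times and a geometric series, is correct.

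The rest is a correct but genuinely different route from the paper's, converging to the same quantitative bound. The paper gets the intermediate inequality $\lambda_1(\T^m\backslash K)\ge\inf_y\P_y(T_K\le 1)$ more directly, by integrating the eigenfunction identity over the domain and using $-\log(1-z)\ge z$, thereby bypassing the torsion function entirely. For the potential-theoretic step, the paper lower-bounds the hitting probability by the last-exit-time decomposition, $\P_y(0<L_K\le t)=\int_K\mu_K(dz)\int_0^t(4\pi s)^{-m/2}e^{-|z-y|^2/4s}\,ds$, whereas you arrive at the identical expression $\int_{\hat K}\bigl(\int_0^1 p_{\R^m}(\hat x,z;s)\,ds\bigr)e_{\hat K}(dz)$ by subtracting the ``hit after time $1$'' contribution, via the Markov property at time $1$ and Chapman--Kolmogorov, from the equilibrium-potential representation of $\P_{\hat x}(\tau_{\hat K}<\infty)$; the latter formula, with the normalisation $e_{\hat K}(\hat K)=\cp(\hat K)$ and Green kernel $1/\kappa_m|x-z|^{m-2}$, is exactly \eqref{hatPapprox}, so your normalisations are consistent with the paper's. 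Your lift to $\R^m$ and the bound $|\hat x-z|\le\tfrac12\sqrt m+\tfrac12\le\sqrt m$ parallel the paper's $|z-y|\le\sqrt m$ and yield the same constant $k_m$. In short: fix the first step and the proof is complete.
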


\begin{proof}
Since $\mathrm{diam}(K) \leq \tfrac12$, $K$ can be embedded in
$\R^m$ by Lemma \ref{lemlb}(e). We let $K\subset
[-\tfrac12,\tfrac12)^m \subset\R^m$, identify
$[-\tfrac12,\tfrac12)^m$ with $\T^m$, and define $\tilde{K}
\subset \R^m$ by $\tilde{K}=\cup_{k\in \Z^m}\{k+K\}$. Let
$\varphi_1$ be the first eigenfunction on $\T^m\backslash K$ with
Dirichlet boundary conditions on $K$, and let
$\lambda_1(\T^m\backslash K)$ be the corresponding first Dirichlet
eigenvalue. Then
\begin{equation*}
e^{-t\lambda_1(\T^m\backslash K)} \varphi_1(x) =
\int_{\T^m\setminus K} dy\, p_{\T^m\backslash
K}(x,y;t)\,\varphi_1(y).
\end{equation*}
Integrating both sides of this identity over $x \in \T^m\backslash
K$, we get
\begin{equation*}
e^{-t\lambda_1(\T^m\backslash K)} \int_{\T^m\backslash K} dx\,
\varphi_1(x) = \int_{\T^m\backslash K} dx\, \varphi_1(x) -
\int_{\T^m\backslash K} dy\, \P_y(T_K\leq t)\,\varphi_1(y),
\end{equation*}
where $T_K$ is the first hitting time of $K$ by Brownian motion on
$\T^m$. It follows that for any $t>0$,
\begin{align}
\label{i1} \lambda_1(\T^m \backslash K) &= -\frac{1}{t} \,
\log\left(1-\frac{\int_{\T^m \backslash K} dy\,\P_y(T_K\leq
t)\,\varphi_1(y)}
{\int_{\T^m\backslash K} dy\,\varphi_1(y)}\right) \nonumber \\
&\geq \frac{1}{t}\, \frac{\int_{\T^m\backslash K}dy\,\P_y(T_K\leq
t)\,\varphi_1(y)}
{\int_{\T^m\backslash K} dy\,\varphi_1(y)}
\geq \frac{1}{t}\, \inf_{y \in \T^m} \P_y(T_K \leq t),
\end{align}
where we use the inequality $-\log(1-z) \geq z$, $z \in [0,1)$. Let $\tilde{\beta}$
be Brownian motion on $\R^m$, and let $\tilde{T}_{\tilde{K}}$ be the first hitting
time of $\tilde{K}$ by $\tilde{\beta}$. Then
\begin{equation}
\label{i2}
\P_y(T_K \leq t) = \tilde{\P}_y(\tilde{T}_{\tilde{K}} \leq t)
\geq \tilde{\P}_y(\tilde{T}_K \leq t)
\geq \tilde{\P}_y(\tilde{L}_K \leq t),
\end{equation}
where $\tilde{L}_{K}$ is the last exit time from $K$ by
$\tilde{\beta}$. Let $\mu_K$ denote the equilibrium measure on $K$
in $\R^m$. Then (see \cite{PS})
\begin{equation}
\label{i3} \tilde{\P}_y(\tilde{L}_K \leq t) = \int_K \mu_K(dz)
\int_0^t ds\,(4\pi s)^{-m/2}\,e^{-|z-y|^2/4s}.
\end{equation}
By \eqref{i2}--\eqref{i3},
\begin{align}
\label{i4} \inf_{y\in \T^m} \P_y(T_K \leq t) = \inf_{y \in
\big[-\tfrac12,\tfrac12\big)^m} \tilde{\P}_y(\tilde{T}_{\tilde{K}} \leq t)
\geq \inf_{y\in\big[-\tfrac12,\tfrac12\big)^m} \int_K \mu_K(dz) \int_0^t
ds\, (4\pi s)^{-m/2}e^{-|z-y|^2/4s}.
\end{align}
But $|z-y| \leq \sqrt{m}$ for $z\in K$ and $y\in[-\tfrac12,\tfrac12)^m$. Hence the right-hand
side of \eqref{i4} is bounded from below by $\cp(K) \int_0^t ds\, (4\pi s)^{-m/2}\,e^{-m/4s}$.
We now get the claim by choosing $t=1$ in \eqref{i1}.
\end{proof}

We note that if $m=3$ and $K=B_{\epsilon}\subset \T^3$ is a closed ball with radius $\epsilon$,
then $\lambda_1(\T^3\backslash B_{\epsilon}) = \cp(B_{\epsilon})[1+o(1)]$ as $\epsilon
\downarrow 0$ (see \cite{MNP}). In that case, since $k_3=0.0101\dots$, we see that the
constant in \eqref{i0} is off by a large factor.


\subsection{Proof of Theorem~\ref{the2}}
\label{secproofthe2}

\begin{proof}
Write, recalling \eqref{e1},\eqref{e2},\eqref{e3},\eqref{e4}, and using Fubini's theorem,
\begin{equation}
\label{ab}
\spadesuit(t)
= (\E_0 \otimes \tilde\E)\big(\tilde\tau_{\T^3\backslash \beta[0,t]}\big)
= (\E_0 \otimes \tilde\E)\left(\int_0^\infty ds\,
1_{\{\tilde\tau_{\T^3\backslash \beta[0,t]}>s\}}\right)
= (\E_0 \otimes \tilde\E)\left(\int_0^\infty ds\,
1_{\{\tilde\beta[0,s] \cap \beta[0,t] = \emptyset\}}\right),
\end{equation}
where $\tilde\E$ denotes expectation over $\tilde\beta$ with $\tilde\beta(0)$ drawn
uniformly from $\T^3$. By symmetry, we may replace $\E_0 \otimes \tilde\E$ by
$\tilde \E_0 \otimes \E$. The proof comes in 7 Steps. In Steps 1-2 we show that
for a suitable $\eta(t)$, tending to zero as $t\to\infty$,
\begin{equation}
\label{form1}
\spadesuit(t) = [1+o(t)] \int_0^{\eta(t)} ds\,
(\P \otimes \tilde\P)(\beta[0,t] \cap \tilde\beta[0,s] = \emptyset).
\end{equation}
Heuristically, the domain perturbation formula gives that
\begin{equation}
\label{form2}
\begin{aligned}
(\P \otimes \tilde\P)(\beta[0,t] \cap \tilde\beta[0,s] = \emptyset)
&= \tilde\E\left[\exp\left\{-\lambda_1(\T^m \setminus \tilde\beta[0,s])[1+o(1)]\right\}\right]\\
&= \tilde \E\left[\exp\left\{-t\,\cp(\tilde\beta[0,s])[1+o(1)]\right\}\right]\\
&= \tilde\E\left[\exp\left\{-ts^{1/2}\,\cp(\tilde\beta[0,1])[1+o(1)]\right\}\right].
\end{aligned}
\end{equation}
Substituting \eqref{form2} into \eqref{form1} and using the Laplace principle, we get \eqref{e6}.
The details are made precise in Steps 3-7.

\medskip\noindent
{\bf 1.}
Pick $\eta\colon\,(0,\infty) \to (0,\infty)$ such that
\begin{equation}
\label{etacond}
\lim_{t\to\infty} \eta(t) \log t = 0, \qquad
\lim_{t\to\infty} \frac{t\sqrt{\eta(t)}}{\log^2 t} = \infty.
\end{equation}
We begin by showing that the integral over $s \in [\eta(t),\infty)$ decays faster than any
negative power of $t$ and therefore is negligible. Indeed, for any $K(t) \in [\eta(t),\infty)$
we have, by the spectral decomposition in \eqref{a9},
\begin{equation}
\label{lamcapest}
(\tilde\E_0 \otimes \E)\left(\int_{\eta(t)}^{K(t)} ds\,
1_{\{\tilde\beta[0,s] \cap \beta[0,t] = \emptyset\}}\right)
\leq \tilde\E_0\left(\int_{\eta(t)}^{K(t)} ds\,e^{-t \lambda_1(\T^3\backslash\tilde\beta[0,s])}\right).
\end{equation}
By Lemma~\ref{eigcap}, $\lambda_1(\T^3 \backslash A) \geq c_3\,\cp(A)$ for every closed set
$A \subset B_{1/4}(0) \subset \T^3$ (in the lower bound we interpret $A$ as a subset of $\R^3$).
Hence the right-hand side of \eqref{lamcapest} is bounded from above by
\begin{equation}
\label{integralovers}
K(t)\, \tilde\E_0\left(e^{-c_3t\,\cp(\tilde\beta[0,\eta(t)] \cap B_{1/4}(0))}\right),
\end{equation}
where we use that $\cp(\tilde\beta[0,s]) \geq \cp(\tilde\beta[0,\eta(t)])$ for $s \geq \eta(t)$. In
Step 2 we show that $\P_0(\tilde\beta[0,\eta(t)] \subsetneq B_{1/4}(0))$ decays faster than
any negative power of $t$. Hence we may replace $\cp(\tilde\beta[0,\eta(t)] \cap B_{1/4}(0))$
by $\cp(\tilde\beta[0,\eta(t)])$ in \eqref{integralovers} at the cost of a negligible error term
$o(t^{-2})$. Next, we note that $\cp(\tilde\beta[0,\eta(t)])$ is equal to $\sqrt{\eta(t)}\,\cp(\tilde\beta[0,1])$
in distribution. Moreover, since $au+bu^{-1} \geq 2\sqrt{ab}$ for all $a,b,u \in (0,\infty)$, we have,
for any $c>0$,
\begin{equation}
\label{chainineq}
\begin{aligned}
e^{-c_3t \sqrt{\eta(t)}\,\cp(\tilde\beta[0,1])}
&= e^{-c_3t \sqrt{\eta(t)}\,\cp(\tilde\beta[0,1]) - c\,\cp(\tilde\beta[0,1])^{-1}}
e^{c\,\cp(\tilde\beta[0,1])^{-1}}\\
&\leq e^{-2\sqrt{c_3ct\sqrt{\eta(t)}}} e^{c\,\cp(\tilde\beta[0,1])^{-1}}.
\end{aligned}
\end{equation}
By Lemma~\ref{lemcapexp}, we therefore have
\begin{equation*}
\tilde\E_0\left(e^{-c_3t\,\cp(\tilde\beta[0,\eta(t)])}\right)
\leq C\,e^{-2\sqrt{c_3ct\sqrt{\eta(t)}}} + o(t^{-2})
\end{equation*}
for some $C<\infty$ and $c>0$ small enough. Hence \eqref{integralovers} is $O(K(t)^{-1})$
when we pick
\begin{equation*}
K(t) = e^{\sqrt{c_3ct\sqrt{\eta(t)}}}.
\end{equation*}
The second half of \eqref{etacond} ensures that $K(t)$ grows faster than any positive power of $t$,
and so we conclude that the integral in the left-hand side of \eqref{lamcapest} is $o(t^{-2})$.
To estimate
\begin{equation*}
(\tilde\E_0 \otimes \E)\left(\int_{K(t)}^\infty ds\,
1_{\{\tilde\beta[0,s] \cap \beta[0,t] = \emptyset\}}\right)
\end{equation*}
we reverse the roles of $\beta$ and $\tilde\beta$, and do the same estimate using that
$\cp(\beta[0,t]) \geq \cp(\beta[0,\eta(t)])$ for $t \in [\eta(t),\infty)$. This leads to
\begin{equation*}
\begin{aligned}
(\tilde\E_0 \otimes \E)\left(\int_{K(t)}^\infty ds\,
1_{\{\tilde\beta[0,s] \cap \beta[0,t] = \emptyset\}}\right)
&\leq C \int_{K(t)}^\infty ds\, e^{-2\sqrt{c_3cs\sqrt{\eta(t)}}} + o(t^{-2})\\
&= [1+o(1)]\, C\sqrt{\frac{K(t)}{c_3c\sqrt{\eta(t)}}}\,e^{-2\sqrt{c_3cK(t)\sqrt{\eta(t)}}} + o(t^{-2}),
\end{aligned}
\end{equation*}
in which the first term is even much smaller than $o(t^{-2})$.

\medskip\noindent
{\bf 2.}
We next show that the probability that $\tilde\beta$ leaves the ball of radius $\tilde\eta(t) = (M(t)\,
\eta(t) \log t)^{1/2}$ prior to time $\eta(t)$ decays faster than any negative power of $t$ when
$\lim_{t\to\infty} M(t) = \infty$. Indeed, by L\'evy's maximal inequality (Theorem 3.6.5 in \cite{S}),
\begin{equation*}
\begin{aligned}
&\tilde\P_0\big(\exists\,s \in [0,\eta(t)]\colon\,\tilde\beta[0,s]
\notin B_{\tilde\eta(t)}(0)\big) \leq 2\,
\tilde\P_0\big(\tilde\beta(\eta(t)) \notin
B_{\tilde\eta(t)}(0)\big)\\
&=O(\exp\big[-\tfrac18\tilde\eta^2(t)/\eta(t)\big])
= O(\exp[-\tfrac18 M(t) \log t]) = O(t^{-\tfrac18 M(t)}) = o(t^{-2}).
\end{aligned}
\end{equation*}
Hence, with a negligible error we may restrict the expectation in the right-hand side of
\eqref{ab} to the event
\begin{equation}
\label{Etdef}
\cE_t = \{\tilde\beta[0,\eta(t)] \subset B_{\tilde\eta(t)}(0)\}.
\end{equation}
The first half of \eqref{etacond} guarantees that $\lim_{t\to\infty} \tilde\eta(t) = 0$ for
some choice of $M(t)$ with $\lim_{t\to\infty} M(t)=\infty$.

\medskip\noindent
{\bf 3.}
We proceed by estimating the number of excursions between the boundaries of two concentric
balls. Fix $0<\delta <\tfrac18$, and consider the successive excursions of $\beta$ between
the boundaries of the balls $B_{1/4}(0)$ and $B_\delta(0)$, i.e., put $\sigma_0 = \inf\{u
\geq 0\colon\,\beta(u) \in \partial B_{1/4}(0)\}$ and, for $k\in\N$,
\begin{equation*}
\begin{aligned}
\bar\sigma_k &= \inf\{u \geq \sigma_{k-1}\colon\,\beta(u) \in \partial B_\delta(0)\},\\
\sigma_k &= \inf\{u \geq \bar\sigma_k\colon\,\beta(u) \in \partial B_{1/4}(0)\}.
\end{aligned}
\end{equation*}
For $k\in\N$, let $\beta_k = \beta([\sigma_{k-1},\sigma_k])$ denote the $k$-th excursion from
$\partial B_{1/4}(0)$ to $\partial B_\delta(0)$ and back. Let $\bar X_k = \beta(\bar\sigma_k)$
denote the location where this excursion first hits $\partial B_\delta(0)$.  Clearly, under the law
$\P$, $(\bar\sigma_k-\sigma_{k-1},\sigma_k-\bar\sigma_k,\bar X_k)_{k\in\N}$ is a uniformly
ergodic Markov chain on $(0,\infty)^2 \times \T^3$. Let
\begin{equation}
\label{Ndeltatdef}
N_\delta(t) = \sup\{k\in\N\colon\,\sigma_k \leq t\}
\end{equation}
be the number of completed excursions prior to time $t$. By the renewal theorem, we have
\begin{equation*}
\lim_{t\to\infty} t^{-1}\E(N_\delta(t)) = \frac{1}{e_\delta+e'_\delta},
\qquad e_\delta = \E(\bar\sigma_1-\sigma_0), \quad e'_\delta = \E(\sigma_1-\bar\sigma_1).
\end{equation*}
Moreover, for every $\delta'>0$ there exists a $C_\delta(\delta')>0$ such that
\begin{equation}
\label{dev}
\P\Big(t^{-1}|N_\delta(t)-(e_\delta+e'_\delta)^{-1}| \geq \delta'\Big)
\leq e^{-C_\delta(\delta')t}, \qquad t \geq 0,
\end{equation}
where we have used the fact that $\sigma_k$ and $\bar\sigma_k$ have finite
exponential moments.

\medskip\noindent
{\bf 4.}
We proceed by estimating the probability that an excursion between the boundaries of
two concentric balls hits $\tilde\beta[0,s]$. Fix $\tilde\beta[0,\eta(t)] \subset B_{\tilde\eta(t)}
(0)$. For $s \in [0,\eta(t)]$ and $N\in\N$, the probability that the first $N$ excursions do
not hit $\tilde\beta[0,s]$ equals
\begin{equation}
\label{PiNdef}
\Pi\big(N;\tilde\beta[0,s]\big)
= \E\left(\prod_{k=1}^N 1_{\{\tilde\beta[0,s] \cap \beta_k = \emptyset\}}\right)
= \E\left(\E\left( \prod_{k=1}^N \big[1-p\big(\bar X_k,\bar X_{k+1};\tilde\beta[0,s]\big)\big]
~\Bigg|~ \cF_{N+1} \right)\right),
\end{equation}
where $\cF_{N+1}$ is the sigma-algebra generated by $\bar X_k$, $1 \leq k \leq N+1$,
and
\begin{equation*}
p\big(x,y;\tilde\beta[0,s]\big) = \P_x^y\left(\sigma_{\tilde\beta[0,s]}<\infty\right),
\qquad x,y \in \partial B_\delta(0),
\end{equation*}
is the probability that a Brownian motion, starting from $x \in \partial B_\delta(0),$ and
conditioned to re-enter $B_\delta(0)$ at $y \in \partial B_\delta(0)$ after it has exited
$B_{1/4}(0)$, hits $\tilde\beta[0,s]$. The following lemma gives a sharp estimate of
$p(x,y;\tilde\beta[0,s])$.

\begin{lemma}
\label{hitest}
If $\tilde\eta(t) \leq \tfrac12\delta$ and $\tilde\beta[0,\eta(t)] \subset B_{\tilde\eta(t)}(0)$,
then
\begin{equation}
\label{est2} p\big(x,y;\tilde\beta[0,s]\big) = [1+O(\delta)]
\left\{(\kappa_3\delta)^{-1} \cp(\tilde\beta[0,s]) + O(\delta^{-2})\,\tilde\eta^2(t)\right\},
\qquad \delta\downarrow 0,
\end{equation}
for all $x,y \in \partial B_\delta(0)$ and $s \in [0,\eta(t)]$.
\end{lemma}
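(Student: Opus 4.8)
\textbf{Proof proposal for Lemma~\ref{hitest}.}

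The plan is to decompose the hitting probability $p(x,y;\tilde\beta[0,s])$ via a last-exit (or Green's function) representation and to exploit the fact that, on the scale $\delta$, the small set $\tilde\beta[0,s]$ sits deep inside $B_\delta(0)$, near the origin, so that the relevant Green's function is essentially constant across $\tilde\beta[0,s]$. Concretely, let $G_{1/4}(\cdot,\cdot)$ be the Green's function of $B_{1/4}(0)$ in $\R^3$, and recall that $G_{1/4}(u,v) = \frac{1}{4\pi|u-v|} - h(u,v)$ where $h$ is harmonic and smooth (of size $O(1)$, with $O(1)$ derivatives) for $u,v$ in a neighbourhood of $0$ of radius $\leq \tfrac18$. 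For a Brownian motion started at $x\in\partial B_\delta(0)$ and conditioned (via the Doob $h$-transform associated with the exit distribution) to re-enter $B_\delta(0)$ at $y$ after leaving $B_{1/4}(0)$, the probability of hitting a compact set $A\subset B_{\tilde\eta(t)}(0)$ before leaving $B_{1/4}(0)$ can be written, up to the conditioning Radon--Nikodym factor, in terms of $\int_A \mu_A(dz)\,G_{1/4}(x,z)$ divided by the appropriate normalisation, where $\mu_A$ is the equilibrium measure of $A$ in $B_{1/4}(0)$ (total mass $\cp_{B_{1/4}}(A)$). The first step is therefore to make this representation precise and to note that the $B_{1/4}(0)$-capacity and the $\R^3$-Newtonian capacity of $A$ agree up to a factor $1+O(\delta)$ when $A\subset B_{\tilde\eta(t)}(0)$ with $\tilde\eta(t)\le\tfrac12\delta$ (harmonic correction is lower order).

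Next I would carry out the two estimates that produce the two terms in \eqref{est2}. For $z\in\tilde\beta[0,s]\subset B_{\tilde\eta(t)}(0)$ and $x\in\partial B_\delta(0)$ we have $|x-z| = \delta(1+O(\tilde\eta(t)/\delta))$, hence $G_{1/4}(x,z) = \frac{1}{4\pi\delta}(1+O(\tilde\eta(t)/\delta)) + O(1) = \frac{1}{4\pi\delta}[1+O(\delta)]$ once $\tilde\eta(t)\le\tfrac12\delta$ and $\delta$ is small (the additive $O(1)$ harmonic part is absorbed into the relative error since $1/\delta\to\infty$). Integrating against $\mu_{\tilde\beta[0,s]}$ gives the leading contribution $\frac{1}{4\pi\delta}\cp(\tilde\beta[0,s])[1+O(\delta)]$; writing $\kappa_3 = 4\pi$ (consistent with \eqref{e10} for $m=3$) this is $(\kappa_3\delta)^{-1}\cp(\tilde\beta[0,s])[1+O(\delta)]$. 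The second, error, term $O(\delta^{-2})\tilde\eta^2(t)$ comes from two sources that must both be controlled: (i) the failure of $G_{1/4}(x,z)$ to be exactly radial, i.e.\ the $O(\tilde\eta(t)/\delta)$ spread, which after multiplication by $\delta^{-1}\cp(\tilde\beta[0,s])$ and using the crude bound $\cp(\tilde\beta[0,s])\le\cp(\overline{B_{\tilde\eta(t)}(0)}) = 4\pi\tilde\eta(t)[1+o(1)]$ yields a term of order $\delta^{-2}\tilde\eta^2(t)$; and (ii) the conditioning factor relating the $\P_x^y$-law to the unconditioned exit law, which is $1+O(\delta)$ uniformly in $x,y\in\partial B_\delta(0)$ because the exit distribution from $B_{1/4}(0)$ started anywhere on $\partial B_\delta(0)$ is within $O(\delta)$ of the uniform (harmonic-measure) distribution on $\partial B_{1/4}(0)$ — a standard Harnack/coupling estimate. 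Collecting (i) and (ii) and the harmonic correction gives exactly the bracketed expression in \eqref{est2}.

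The main obstacle I anticipate is item (ii): handling the Doob-conditioning cleanly and showing that the conditioned hitting probability differs from the unconditioned one only by a multiplicative $[1+O(\delta)]$, uniformly over the entry/exit points $x,y\in\partial B_\delta(0)$ and uniformly over all admissible small sets $\tilde\beta[0,s]\subset B_{\tilde\eta(t)}(0)$. The clean way is to observe that conditioning on the re-entry point $y$ only reweights the post-$B_{1/4}(0)$-exit trajectory, whereas the event ``hit $\tilde\beta[0,s]$'' can be split according to whether the hit occurs before or after the first exit from $B_{1/4}(0)$; the ``before'' part is unaffected by the conditioning and, since $\tilde\beta[0,s]$ is tiny and sits near $0$ while $x$ is on $\partial B_\delta(0)$, it already contributes the full leading term, while the ``after'' part is $O(\delta)$ smaller (each excursion out to $\partial B_{1/4}(0)$ and back has only an $O(\delta/\text{?})$-small chance of coming back near $0$, more precisely an $O(1)$ chance but then an $O(\cp(\tilde\beta[0,s])/\delta) = O(\tilde\eta(t)/\delta) = O(1)$ chance of hitting — so one must argue more carefully, e.g.\ by noting the "after" contribution is a geometric-type series whose ratio is $O(\delta)$ away from $1$ only after one further excursion, hence bounded by a constant times the "before" term and incorporated into $[1+O(\delta)]$ after a slightly more careful bookkeeping, or alternatively by absorbing the ``after'' contributions into the recursive structure already present in \eqref{PiNdef} rather than into $p(x,y;\cdot)$ itself). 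I would first attempt the bookkeeping that keeps only genuinely-before-exit hits inside $p(x,y;\cdot)$, so that $p(x,y;\cdot)$ is manifestly $[1+O(\delta)]$ times a quantity independent of the conditioning, deferring the multi-excursion corrections to the analysis of $\Pi(N;\cdot)$ in the next steps; if that re-partitioning is awkward, the fallback is the direct Harnack-inequality comparison of conditioned and unconditioned laws on $\partial B_\delta(0)$.
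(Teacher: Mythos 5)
Your proposal follows essentially the same route as the paper: a last-exit / equilibrium-measure representation of the hitting probability, a first-order expansion of the kernel $|x-z|^{-1}$ about $z=0$ (giving the leading term $(\kappa_3\delta)^{-1}\cp(\tilde\beta[0,s])$ and, via $\cp(\tilde\beta[0,s])\le\cp(B_{\tilde\eta(t)}(0))=\kappa_3\tilde\eta(t)$, the error $O(\delta^{-2})\,\tilde\eta^2(t)$), and an $O(\delta)$ multiplicative control of the excursion conditioning. The paper works with the full-space formula $\P_x(\sigma_K<\infty)=\int_K \mu_K(dz)/(\kappa_3|x-z|)$ rather than with the Green's function of $B_{1/4}(0)$; that is a cosmetic difference. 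The one step you leave genuinely open --- your item (ii) --- is closed in the paper without any $h$-transform or Harnack bookkeeping: one sandwiches $p(x,y;\tilde\beta[0,s])$ against the unconditioned quantity $\P_x(\sigma_{\tilde\beta[0,s]}<\infty)$, the discrepancy being at most the probability of a hit occurring \emph{after} the first passage to $\partial B_{1/4}(0)$, which is bounded by the return probability $4\delta$ to $B_\delta(0)$ times $\sup_{y,z}p(y,z;\tilde\beta[0,s])$; since $\sup p$ is of the same order as the leading term, this is a relative error $O(\delta)$ --- precisely the resolution your parenthetical gropes for. Your instinct is therefore correct; note only that your claim that the ``before'' part is unaffected by the conditioning is not literally true (conditioning on the re-entry point does tilt the law of the earlier path), but the tilt is $1+O(\delta)$ by the Poisson-kernel comparison you mention as a fallback, or is bypassed entirely by the sandwich above.
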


\begin{proof}
We begin by showing that if $\tilde\eta(t) \leq \tfrac12\delta$, then
\begin{equation}
\label{est1}
\big|\P_x\big(\sigma_{\tilde\beta[0,s]}<\infty\big) - (\kappa_3\delta)^{-1}\cp(\tilde\beta[0,s])\big|
\leq 2\delta^{-2} \tilde\eta^2(t)
\end{equation}
for all $x \in \partial B_\delta(0)$ and $\tilde\beta[0,s] \subset B_{\tilde\eta(t)}(0)$.
Indeed, for any compact set $K \subset \R^3$, we have
\begin{equation}
\label{hatPapprox} \cp(K) = \int_K \mu_K(dy), \qquad
\P_x(\sigma_K<\infty) = \int_K \frac{\mu_K(dy)}{\kappa_3 |x-y|}, \qquad x \in K,
\end{equation}
where $\mu_K$ is the equilibrium measure on $K$ (see \cite{S1}, \cite{PS}, \cite{Sz}).
If $|x|=\delta$ and $|y| \leq \tfrac12\delta$, then $||x-y|^{-1}-|x|^{-1}| \leq 2\delta^{-2}|y|$.
Hence \eqref{hatPapprox} yields the estimate $|\P_x(\sigma_K<\infty) - (\kappa_3\delta)^{-1}
\cp(K)| \leq 2\kappa_3^{-1}\delta^{-2}\tilde\eta(t) \cp(K)$, provided $K \subset B_{\tilde\eta(t)}(0)$.
In that case $\cp(K) \leq \cp(B_{\tilde\eta(t)}(0)) = \kappa_3 \tilde\eta(t)$, and the claim
in \eqref{est1} follows. Furthermore, since $\P_a(\sigma_{B_\delta (0)}<\infty)
= \kappa_3(4\delta)$ for all $a \in B_{1/4}(0)$, we have
\begin{equation*}
0 \leq \P_x\big(\sigma_{\tilde\beta[0,s]}<\infty\big) -
\inf_{y \in \partial B_\delta(0)}p\big(x,y;\tilde\beta[0,s]\big) \leq
\kappa_3 (4\delta) \sup_{y,z \in \partial B_\delta(0)} p\big(y,z;\tilde\beta[0,s]\big),
\, x \in \partial B_\delta(0).
\end{equation*}
Hence \eqref{est1} implies \eqref{est2}.
\end{proof}

\medskip\noindent
{\bf 5.}
We proceed by estimating the integral over $s \in [0,\eta(t)]$ that supplements
\eqref{lamcapest}. Recalling \eqref{Ndeltatdef}, we have
\begin{equation*}
1_{\{\tilde\beta[0,s] \cap \beta[0,\sigma_0] = \emptyset\}}
\,\prod_{k=1}^{N_\delta(t)+1} 1_{\{\tilde\beta[0,s] \cap \beta_k = \emptyset\}}
\leq 1_{\{\tilde\beta[0,s] \cap \beta[0,t] = \emptyset\}}
\leq \prod_{k=1}^{N_\delta(t)} 1_{\{\tilde\beta[0,s] \cap \beta_k = \emptyset\}}.
\end{equation*}
In terms of the probability defined in \eqref{PiNdef}, and with the help of the large deviation
estimate in \eqref{dev}, this sandwich gives us, on the event $\cE_t$,
\begin{equation*}
\E\left(\int_0^{\eta(t)} ds\,1_{\{\tilde\beta[0,s] \cap \beta[0,t] = \emptyset\}}\right)
= O\left(\eta(t)\,e^{-C_\delta(\delta')t}\right)
+ [1+o_t(1)]\, \Pi\Big([1+o_t(1)](e_\delta+e'_\delta)^{-1}t;\tilde\beta[0,s]\Big),
\end{equation*}
where the error terms $o_t(1)$ tend to zero as $t\to\infty$ (here we use that $\lim_{t\to\infty}
\P(B_{\tilde\eta(t)}(0) \cap \beta[0,\sigma_0] = \emptyset)=1$).

\medskip\noindent
{\bf 6.}
Combining the estimates in Steps 1--5, and using that $\cp(\tilde\beta[0,s])$ equals
$\cp(\tilde\beta[0,1])\sqrt{s}$ in distribution under $\tilde\P_0$, we get
\begin{equation}
\label{lim1}
\begin{aligned}
\spadesuit(t)
&= o(t^{-2}) + [1+o_t(1)]\,\tilde\E_0 \left(\int_0^{\eta(t)} ds\,e^{-A_\delta(t)\sqrt{s}}\right)\\
&= o(t^{-2}) + [1+o_t(1)]\,\tilde\E_0\left(\frac{2}{A_\delta(t)^2}\,
\left\{1+e^{-A_\delta(t)\sqrt{\eta(t)}}\,[A_\delta(t)\sqrt{\eta(t)}-1]\right\}\right),
\end{aligned}
\end{equation}
with
\begin{equation}
\label{lim2}
t^{-1}A_\delta(t) = [1+O(\delta)]\,[1+o_t(1)]\,(e_\delta+e'_\delta)^{-1}\,(\kappa_3\delta)^{-1}\cp(\tilde\beta[0,1]),
\qquad t \to \infty.
\end{equation}
The term between braces in \eqref{lim1} is bounded, and tends to $1$ in $\tilde\P_0$-probability as
$t\to\infty$ because of the first half of \eqref{etacond}. Therefore \eqref{lim1},\eqref{lim2}
lead us, for fixed $\delta$, to
\begin{equation*}
\lim_{t\to\infty} t^2\spadesuit(t) = \lim_{t\to\infty} t^2\,\tilde\E_0\left(\frac{2}{A_\delta(t)^2}\right)
= [1+O(\delta)]\,2(\kappa_3\delta)^2(e_\delta+e'_\delta)^2\,\tilde\E_0\left(\frac{1}{\cp(\tilde\beta[0,1])^2}\right),
\end{equation*}
where we have used Lemma~\ref{lemcapexp}. The latter also implies that the expectation in the right-hand side is
finite.

\medskip\noindent
{\bf 7.}
Finally, letting $\delta \downarrow 0$ and using that
\begin{equation}
\label{delcaplim}
\lim_{\delta \downarrow 0} \delta e_\delta = 1/\kappa_3, \qquad
\lim_{\delta \downarrow 0} e'_\delta =
\E_0(\tau_{B_{1/4}(0)})<\infty,
\end{equation}
we arrive at
\begin{equation*}
\lim_{t\to\infty} t^2\spadesuit(t) =
2\,\tilde\E_0\left(\frac{1}{\cp(\tilde\beta[0,1])^2}\right).
\end{equation*}
This proves the claim in \eqref{e6}.
\end{proof}


\section{Torsional rigidity for $m \geq 4$}
\label{proofthe3}

The same estimates as in the proof of Theorem~\ref{the2} for $m=3$ in
Section~\ref{secproofthe2}  can be used to prove Theorem~\ref{the3} for
$m \geq 4$ after we replace $\tilde\beta[0,s]$ by $\tilde W_{r(t)}[0,s]$.
The details are explained in Sections~\ref{proofthe35ormore}--\ref{proofthe34}.


\subsection{Proof of Theorem~\ref{the3} for $m \geq 5$}
\label{proofthe35ormore}

\begin{proof}
In the proof we assume that
\begin{equation}
\label{rcond}
\lim_{t\to\infty} t^{1/(m-2)} r(t) = 0, \qquad  \lim_{t\to\infty} \frac{t}{\log^3 t}\,r(t)^{m-4} = \infty.
\end{equation}

\medskip\noindent
{\bf 1-2.}
The estimates in Steps 1--2 are sharp enough to produce a negligible error term $o(t^{-2/(m-2)})$
when \eqref{etacond} is replaced by
\begin{equation}
\label{etacondalt}
\lim_{t\to\infty} \eta(t) \log t = 0, \qquad
\lim_{t\to\infty} \frac{t\,r(t)^{m-4}\,\eta(t)}{\log^2 t} = \infty,
\end{equation}
where we note that by the second half of \eqref{rcond} there exists a choice of $\eta(t)$
satisfying \eqref{etacondalt}. Indeed, the analogues of \eqref{lamcapest}--\eqref{integralovers}
give (recall that Lemma~\ref{eigcap} also holds for $m \geq 4$)
\begin{equation}
\label{integralovers*}
(\tilde\E_0 \otimes \E)\left(\int_{\eta(t)}^{K(t)} ds\,
1_{\{\tilde{W}_{r(t)}[0,s] \cap \beta[0,t] = \emptyset\}}\right)
\leq K(t)\, \tilde\E_0\left(e^{-c_mt\,\cp(\tilde{W}_{r(t)}[0,\eta(t)] \cap B_{1/4}(0))}\right),
\end{equation}
where we use that $\cp(\tilde{W}_{r(t)}[0,s]) \geq \cp(\tilde{W}_{r(t)}[0,\eta(t)])$ for $s \geq
\eta(t)$. The estimate in Step 2 shows that, because of the first half of \eqref{etacondalt},
$\P_0(\cE_t^c)$ with $\cE_t$ defined in \eqref{Etdef} decays faster than any negative power
of $t$, so that we can remove the intersection with $B_{1/4}(0)$ at the expense of a negligible
error term. Since $t\,\cp(\tilde{W}_{r(t)}[0,\eta(t)])$ equals $t\,r(t)^{m-2}\cp(\tilde{W}_1[0,\eta(t)/r(t)^2])$
in distribution under $\tilde\P_0$, we obtain that
\begin{equation*}
\tilde\E_0\left(e^{-c_mt\,\cp(\tilde{W}_{r(t)}[0,\eta(t)])}\right)
= \tilde\E_0\left(e^{-c_m\,t\,r(t)^{m-2}\,\cp(\tilde{W}_1[0,\eta(t)/r(t)^2])}\right).
\end{equation*}
Via an estimate similar as in \eqref{chainineq} with $c$ replaced by $c\eta(t)/r(t)^2$, we obtain,
with the help of Lemma~\ref{lemcapest*} below (which is the analogue of Lemma~\ref{lemcapexp}
and is proved in Section~\ref{invCAPt}),
\begin{equation*}
\tilde{\E}_0\left(e^{-c_m\,t\,r(t)^{m-2}\,\cp(\tilde{W}_1[0,\eta(t)/r(t)^2])}\right)
\leq C\,e^{-2\sqrt{c_mc\, t\,r(t)^{m-2}\,\eta(t)/r(t)^2}} + o(t^{-2/(m-2)}).
\end{equation*}
Hence the right-hand side of \eqref{integralovers*} is $O(K(t)^{-1})$ when we pick
\begin{equation*}
K(t) = e^{\sqrt{c_mc\, t\,r(t)^{m-2}\,\eta(t)/r(t)^2}}.
\end{equation*}
The second half of \eqref{etacondalt} ensures that $K(t)$ grows faster than any positive power
of $t$, and so \eqref{integralovers*} is negligible. The contribution
\begin{equation*}
(\tilde\E_0 \otimes \E)\left(\int_{K(t)}^\infty ds\,
1_{\{\tilde{W}_{r(t)}[0,s] \cap \beta[0,t] = \emptyset\}}\right)
\end{equation*}
can again be estimated in a similar way by reversing the roles of $\beta$ and $\tilde\beta$.
This leads to a term that is even much smaller.

\medskip\noindent
{\bf 3-5.}
Step 3 is unaltered. In Step 4 the term $\delta^{-1}$ is to be replaced by $\delta^{-(m-2)}$,
because in \eqref{hatPapprox} the term $1/\kappa_3|x-y|$ is to be replaced by $1/\kappa_m
|x-y|^{m-2}$. Step 5 is unaltered.

\medskip\noindent
{\bf 6-7.}
In Step 6 we use that $\cp(\tilde W_{r(t)}[0,s])$ equals $s^{(m-2)/2} \cp(\tilde W_{r(t)/\sqrt{s}}[0,1])$
in distribution under $\tilde\P_0$. This gives
\begin{equation}
\label{spadert}
\spadesuit_{r(t)}(t) = I_1(t) + o(t^{-2/(m-2)}),
\end{equation}
where
\begin{equation*}
I_1(t) = \tilde\E_0\left(\int_0^{\eta(t)} ds\,e^{-A_\delta(t,s)\,s^{(m-2)/2}}\right),
\end{equation*}
with
\begin{equation*}
A_\delta(t,s) = [1+O(\delta)]\,[1+o_t(1)]\,(e_\delta+e'_\delta)^{-1}\,t\,\delta^{-(m-2)}\,
\cp(\tilde W_{r(t)/\sqrt{s}}[0,1]).
\end{equation*}
With the change of variable $u=t^{1/(m-2)}\sqrt{s}$, the integral becomes
\begin{equation}
\label{I1I2}
I_1(t) = t^{-2/(m-2)} I_2(t),
\end{equation}
where
\begin{equation}
\label{I2}
I_2(t) =  \tilde\E_0\left(2\int_0^{t^{1/(m-2)}\sqrt{\eta(t)}} du\,u\,e^{-A'_\delta(t,u)\,u^{m-2}}\right)
\end{equation}
with (recall \eqref{e12})
\begin{equation}
\label{I2*}
A'_\delta(t,u) = [1+O(\delta)]\,[1+o_t(1)]\,(e_\delta+e'_\delta)^{-1}\,\delta^{-(m-2)}\,
\cp(\tilde W_{\epsilon(t)/u}[0,1]),
\end{equation}
and where $\eps(t) = t^{1/(m-2)}r(t)$. Now, \eqref{capscalapr} tells us that
\begin{equation*}
\cp(\tilde W_{\epsilon(t)/u}[0,1]) = [1+o(1)]\,u^{-(m-4)}\cp(\tilde W_{\epsilon(t)}[0,1])
\end{equation*}
in $\P_0$-probability as $t\to\infty$ for every $u \in (0,\infty)$ and $m \geq 5$, where we use that $\eps(t) = o(1)$ by the first half of
\eqref{rcond}. Therefore with the help of \eqref{etacondalt} and dominated convergence, we find
that
\begin{equation}
\label{I2lim}
I_2(t) = [1+o(1)]\, \tilde\E_0\left(2 \int_0^\infty du\,u\,e^{-A''_\delta(t)\,u^2}\right)
= [1+o(1)]\,\tilde\E_0\left(\frac{1}{A''_\delta(t)}\right), \qquad t \to \infty,
\end{equation}
with
\begin{equation}
\label{I2lim*}
A''_\delta(t) = [1+O(\delta)]\,[1+o_t(1)]\,(e_\delta+e'_\delta)^{-1}\,\delta^{-(m-2)}\,
\cp(\tilde W_{\epsilon(t)}[0,1]).
\end{equation}
In Step 7 the first line in \eqref{delcaplim} is replaced by the statement that $\lim_{\delta
\downarrow 0}\delta^{m-2} e_{\delta}=1/\kappa_m$. Combining \eqref{spadert}, \eqref{I1I2}
and \eqref{I2lim}, and letting $\delta\downarrow 0$, we get the scaling in \eqref{e13}.
\end{proof}


\subsection{Proof of Theorem~\ref{the3} for $m=4$}
\label{proofthe34}

\begin{proof}
In the proof we assume that
\begin{equation}
\label{rcond*}
\lim_{t\to\infty} t^{1/(m-2)}r(t) = 0, \qquad \lim_{t\to\infty} \frac{t}{\log^3 t}\,\frac{1}{\log(1/r(t))} = \infty.
\end{equation}

\medskip\noindent
{\bf 1-2.}
The estimates in Steps 1--2 are sharp enough to produce a negligible error term $o(t^{-2/(m-2)})$
when \eqref{etacond} is replaced by
\begin{equation}
\label{etacondalt*}
\lim_{t\to\infty} \eta(t) \log t = 0, \qquad
\lim_{t\to\infty} \frac{t}{\log^2 t}\,\frac{\eta(t)}{\log(\eta(t)/r(t)^2)} = \infty,
\end{equation}
where we note that by the second half of  \eqref{rcond*} there exists a choice of $\eta(t)$ satisfying
\eqref{etacondalt*}. The estimate uses \eqref{chainineq} with $c$ replaced by
$c(\eta(t)/r(t)^2)/\log(\eta(t)/r(t)^2)$, and also Lemma~\ref{lemcapest*} below (which is
the analogue of Lemma~\ref{lemcapexp} and is proved in Section~\ref{invCAPt}).

\medskip\noindent
{\bf 3-5.}
These steps are unaltered.

\medskip\noindent
{\bf 6-7.}
These steps are unaltered: \eqref{defscalcap},\ref{capscalm4} tell us that
\begin{equation*}
\cp(\tilde W_{\epsilon(t)/u}[0,1])= [1+o(1)]\,\cp(\tilde W_{\epsilon(t)}[0,1])
\quad \text{in $\P_0$-probability as $t\to\infty$}
\end{equation*}
for every $u \in (0,\infty)$, where we use that $\eps(t) = t^{1/(m-2)}r(t) = o(1)$ by the first half
of \eqref{rcond*}. This is used in \eqref{I2},\eqref{I2*} to get \eqref{I2lim},\eqref{I2lim*} with
$m=4$.
\end{proof}


\section{Proof of Theorem~\ref{the4}}
\label{extra}

\begin{proof}
By a direct calculation via the Fourier transform, we have that the Dirichlet heat kernel
on $\T^m$ is given by (recall the notation in Section~\ref{background})
\begin{equation*}
 p_{\T^m}(x,y;s) = (4\pi s)^{-m/2} \sum_{\lambda\in
(2\pi\Z)^m} e^{-|x-y-\lambda|^2/4s},
\end{equation*}
where $|x-y-\lambda|=d(x-y,\lambda)$. It follows that
\begin{equation}
\label{a21}
 p_{\T^m}(x,x;s) = (4\pi s)^{-m/2}
\sum_{\lambda\in\Z^m} e^{-\pi^2|\lambda|^2/s}.
\end{equation}
By translation invariance, $p_{\T^m}(x,x;s)$ is independent of
$x$, and we will denote it by $\pi(s)$. By the eigenfunction
expansion in \eqref{a9} with $M=\T^m$ and $\O=\cB(t)=\T^m
\backslash\beta[0,t]$, and by the monotonicity of the Dirichlet
heat kernel, we have for $s>0$,
\begin{equation*}
e^{-s\lambda_t}\varphi(x)^2 \leq p_{\cB(t)}(x,x;s)\le\pi(s),
\end{equation*}
where we abbreviate $\lambda_t = \lambda_1(\cB(t))$ as in \eqref{abbrdef}.
Taking the supremum over $x$, we obtain
\begin{equation*}
 \lVert \varphi_1
\rVert_{L^{\infty}(\cB(t))}^{-2} \geq
\pi(s)^{-1} e^{-s\lambda_t}.
\end{equation*}
By Lemma \ref{lemlb}(b) we have, for $s>0$,
\begin{equation*}
 \cT(\cB(t)) \geq
\lambda_t^{-1}\,\pi(s)^{-1} e^{-s\lambda_t}.
\end{equation*}
Since $q\mapsto q^{-1}e^{-sq}$ is convex for every $s>0$, Jensen gives that
\begin{equation}
\label{a214}
\spadesuit(t) \geq \pi(s)^{-1}\E_0(\lambda_t)^{-1} e^{-s\E_0(\lambda_t)}.
\end{equation}
For $s=1$ this reads
\begin{equation}
\label{a215}
\E_0(\lambda_t)\,e^{\E_0(\lambda_t)} \geq \pi(1)^{-1} \spadesuit(t)^{-1}.
\end{equation}
Since the right-hand side of \eqref{a215} increases to infinity as
$t\to\infty$, there exists  $t_0<\infty$ such that
$\E_0(\lambda_t) \geq1$ for $t \geq t_0$. We now put
\begin{equation*}
s_t=\E_0(\lambda_t)^{-1}
\end{equation*}
and note that $s_t \leq 1$ for $t \geq t_0$. By \eqref{a21} and \eqref{a214},
we find that, for $t \geq t_0$,
\begin{align}
\spadesuit(t) &\geq e^{-1}\pi(s_t)^{-1} s_t
=(4\pi)^{m/2}e^{-1}\,s_t^{(2+m)/2} \left(\sum_{\lambda\in\Z}
e^{-\pi^2|\lambda|^2/s_t}\right)^{-m}\nonumber \\ & \geq
(4\pi)^{m/2}e^{-1}\,s_t^{(2+m)/2} \left(\sum_{\lambda\in\Z}
e^{-\pi^2|\lambda|^2}\right)^{-2} \geq s_t^{(2+m)/2}.
\end{align}
We conclude that, for $t \geq t_0$,
\begin{equation*}
 \E_0(\lambda_t) \geq \spadesuit(t)^{-2/(m+2)}.
\end{equation*}
\end{proof}


\section{Capacity of Wiener sausage for $m \geq 4$}
\label{scalCAP}

In Section~\ref{invCAPt} we derive the analogue of Lemma~\ref{lemcapexp}, showing
that the inverse of $\cC(t)$ for $m \geq 4$ defined in \eqref{defscalcap} has a finite
exponential moment uniformly in $t \geq 2$. In Section~\ref{proofcapscal} we prove
\eqref{defscalcap}--\eqref{capscalm4} for $m \geq 5$.


\subsection{Exponential moment of the inverse capacity}
\label{invCAPt}

\begin{lemma}
\label{lemcapest*}
Let $m \geq 4$. Then there exists a $c>0$ such that
\begin{equation*}
\sup\limits_{t \geq 2} \E_0\left(\exp\left[\frac{c}{\cC(t)}\right]\right) < \infty.
\end{equation*}
\end{lemma}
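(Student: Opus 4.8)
The plan is to reduce the claim to a single‐scale statement: that a suitable ``lower capacity bound per unit of time'' holds with overwhelming probability, and then to promote this to a finite exponential moment for $1/\cC(t)$ uniformly in $t\ge 2$. Concretely, recall from \eqref{defscalcap} that $\cC(t)$ differs from $\cp(W_1[0,t])$ only by the deterministic factor $t/\log t$ (for $m=4$) or $t$ (for $m\ge5$); so it suffices to produce, for some $c>0$ and all $t\ge 2$,
\begin{equation*}
\E_0\!\left(\exp\!\left[c\,\frac{\log t}{t}\,\frac1{\cp(W_1[0,t])}\right]\right) \le C_4 < \infty \quad (m=4),
\qquad
\E_0\!\left(\exp\!\left[c\,\frac{1}{t}\,\frac1{\cp(W_1[0,t])}\right]\right) \le C_m < \infty \quad (m\ge5).
\end{equation*}
The natural mechanism is subadditivity from below: one splits $[0,t]$ into $n=\lfloor t\rfloor$ (say) unit time blocks, and observes that the Wiener sausage over $[0,t]$ contains $n$ translated independent copies of unit-time Wiener sausages $W_1^{(1)},\dots,W_1^{(n)}$ based at the successive integer-time positions of $\beta$. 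Since Newtonian capacity is monotone and, crucially, \emph{super}additive up to a controllable error when the pieces are spatially separated, one expects $\cp(W_1[0,t]) \gtrsim \sum_{i=1}^n \cp(W_1^{(i)})$ on a high-probability event. Each $\cp(W_1^{(i)})$ is an i.i.d. copy of $\cp(W_1[0,1])$, which is strictly positive a.s.

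The key steps, in order, are: (1) fix $m\ge 4$ and set $n=\lfloor t\rfloor$; decompose $\beta[0,t]$ into its $n$ unit-time pieces and use monotonicity of capacity to reduce to a sum over i.i.d.\ capacities of unit Wiener sausages, up to an interaction correction. (2) Control the interaction: the capacity of a union is at least the sum of capacities minus cross terms governed by the Green function between the pieces; since after time $1$ the pieces are typically at mutual distance of order $1$ (the displacement of $\beta$ over a unit interval is $O(1)$), these cross terms can be bounded by a constant times the number of ``close'' pairs, which by a moment estimate on Brownian displacements is $O(n)$ with high probability — so $\cp(W_1[0,t]) \ge c' n$ off an event of probability $\le e^{-cn}$. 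For $m=4$ one keeps the logarithmic normalization: $\cp(W_1[0,t])$ is of order $t/\log t$, so the natural per-block lower bound is $\cp(W_1[0,t])\ge c' n/\log n$, again off an exponentially small event; here one should either invoke the result of \cite{ASS} cited in the statement of \eqref{capscalm4}, or redo the elementary lower half of that argument. (3) Turn the high-probability lower bound into the exponential moment: write
\begin{equation*}
\E_0\!\left(\exp\!\left[\frac{c}{\cC(t)}\right]\right)
= \E_0\!\left(e^{c/\cC(t)}\,1_{\{\cC(t)\ge c_m/2\}}\right)
+ \E_0\!\left(e^{c/\cC(t)}\,1_{\{\cC(t)< c_m/2\}}\right),
\end{equation*}
bound the first term by $e^{2c/c_m}$, and bound the second by $\sum_{k\ge 1} e^{c\cdot 2^{k+1}/c_m}\,\P_0(2^{-k-1} c_m \le \cC(t) < 2^{-k}c_m)$; the event $\{\cC(t)<2^{-k}c_m\}$ forces an atypically small capacity, which by Step (2) has probability decaying (at least) like $e^{-c'' 2^{k}}$ or faster, uniformly in $t\ge 2$, so the series converges for $c$ small. (4) Check uniformity in $t\ge 2$: all the estimates in Steps (1)–(3) depend on $t$ only through $n=\lfloor t\rfloor\ge 1$ and are monotone there, so the bound is uniform.

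\textbf{Main obstacle.} The delicate point is Step (2) — the quantitative superadditivity of Newtonian capacity with error control, and in particular handling the \emph{small}-capacity deviations uniformly in $t$. One must rule out the possibility that all $n$ unit-pieces conspire to be simultaneously tiny (or to overlap heavily), and the bound on the overlap/interaction term must be strong enough that the probability of $\cp(W_1[0,t])$ being smaller than a fraction of its typical order decays at least geometrically in that fraction's reciprocal. The cleanest route is the Green-function representation of capacity, $\cp(A)=\big(\inf_\mu \iint G\,d\mu\,d\mu\big)^{-1}$ with $G$ the Newtonian kernel (cf.\ \eqref{caprepr}), applied to the uniform-on-pieces test measure; this makes the interaction term an explicit double sum of $\E_0[|\,\cdot\,|^{-(m-2)}]$-type quantities along the Brownian skeleton, each of which has finite expectation by the same computation as in the proof of Lemma~\ref{lemcapexp} (the iterated conditioning bound \eqref{dec1}–\eqref{dec2}). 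For $m=4$ the additional subtlety is the $\log$ correction, for which leaning on \cite{ASS} (as the paper already does for the matching upper statement) is the pragmatic choice. Everything else — the decomposition, the union bound over dyadic shells, and the verification of $t$-uniformity — is routine once Step (2) is in hand.
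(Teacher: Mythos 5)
There is a genuine gap, and it sits exactly where you flagged the ``main obstacle''. Your Step (3) needs, uniformly in $t\ge 2$, a lower-tail estimate of the form $\P_0\big(\cC(t)<2^{-k}c_m\big)\lesssim e^{-c''2^{k}}$ with $c''$ that can be made to dominate $2c/c_m$; but your Step (2) only delivers a \emph{fixed-threshold} bound $\P_0\big(\cp(W_1[0,t])<c'n\big)\le e^{-cn}$, which says nothing about how the probability improves as the threshold is pushed deeper into the lower tail (larger $k$). A bound that decays in $2^{k}$ uniformly in $t$ is essentially equivalent to the uniform exponential moment of $1/\cC(t)$ that you are trying to prove, so the dyadic-shell reduction is circular unless that tail estimate is supplied independently. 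Invoking \cite{ASS} for $m=4$ does not close this: that reference gives a law of large numbers for $\cp(W_1[0,t])$, i.e.\ convergence in probability, not a quantitative lower-tail deviation bound of the required strength. Likewise, in the superadditivity step, controlling the interaction term by ``the number of close pairs, which is $O(n)$ with high probability'' would at best give concentration at a fixed scale, not the $k$-dependent decay.

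The correct mechanism is the one you relegate to the last paragraph, but you stop one step short of it. The paper's proof uses the energy representation \eqref{caprepr*} with the sojourn measure $\mu_{W_1[0,t]}=\frac1t\int_0^t\nu_{\beta(s)}\,ds$ as test measure, which bounds $1/\cp(W_1[0,t])$ by $\frac{C}{t^2}\int_0^t\!\int_0^t du\,dv\,\big(|\beta(u)-\beta(v)|^{m-2}\vee 1\big)^{-1}$ --- there is no need for a block decomposition or for superadditivity at all. The point is then to prove an \emph{exponential moment} for this double time integral (after normalizing by $t$, resp.\ $t/\log t$): your remark that each cross term ``has finite expectation'' is only a first-moment statement and gives $\E_0(1/\cC(t))<\infty$, not $\E_0(e^{c/\cC(t)})<\infty$. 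The paper gets the exponential moment by applying Jensen's inequality to the outer average $\frac1t\int_0^t du$, extending the inner integral to a translation-invariant one, and then running the series expansion with the iterated Markov-property bound exactly as in \eqref{dec1}--\eqref{dec2}; for $m=4$ the single-time integral is $O(\log t)$, which is cancelled by the $\log t$ in the definition \eqref{defscalcap} of $\cC(t)$, so the argument stays self-contained and uniform in $t$. If you carry out that computation, Steps (1)--(3) of your plan become unnecessary.
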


\begin{proof}
The proof is similar to that of Lemma~\ref{lemcapexp}. For any compact set $A \subset\R^m$,
we use the representation (compare with \eqref{caprepr})
\begin{equation}
\label{caprepr*}
\frac{1}{\cp(A)} = \inf \left[ \int_{\R^m} \int_{\R^m} \frac{\mu(dx) \mu(dy)}
{\kappa_m\left\vert x-y\right\vert^{m-2}}\colon\, \mu \text{ is a probability
measure on }A\right].
\end{equation}
As test probability measure we choose the sojourn measure of $W_1[0,t]$,
namely,
\begin{equation}
\label{muWiener}
\mu_{W_1[0,t]} = \frac{1}{t} \int_0^t \nu_{\beta(s)}\,ds \quad \text{ with } \quad
\nu_z(dx) = \frac{1}{\omega_m}\,1_{B_1(z)}(x)\,dx, \quad z \in \R^m,
\end{equation}
where $\omega_m = |B_1(0)|$. Since $\mu$ has support in $W_1[0,t]$, we have
\begin{equation*}
\frac{1}{\cp(W_1[0,t])} \leq \frac{1}{\kappa_m\omega_m^2 t^2 }
\int_0^t du \int_0^t dv\, \int_{B_1(0)} dx \int_{B_1(0)} dy\,\,
\frac{1}{\left\vert\beta(u)+x-\beta(v)-y\right\vert^{m-2}}.
\end{equation*}
Moreover, there exists $C=C(m)>0$ such that for all $u$ and $v$,
\begin{equation*}
\int_{B_1(0)} dx \int_{B_1(0)} dy \,\,\frac{1}{\left\vert\beta(u)+x-\beta(v)-y\right\vert^{m-2}}
\leq \frac{C}{\left\vert\beta(u)-\beta(v)\right\vert^{m-2} \vee 1}.
\end{equation*}

We first prove the claim for $m \geq 5$. Let $\bar{c}=c\,C/\kappa_m\omega_m^2$. We
have that
\begin{equation}
\label{eqest1}
\begin{aligned}
\exp\left[\frac{c}{\cC(t)}\right]
&\leq \exp\left[\frac{\bar{c}}{t} \int_0^t du
\int_0^t dv\,\,\frac{1}{\left\vert\beta(u)-\beta(v)\right\vert^{m-2} \vee 1}\right]\\
&\leq \frac{1}{t} \int_0^t du\,\exp\left[\bar{c}
\int_0^t dv\,\,\frac{1}{\left\vert\beta(u)-\beta(v)\right\vert^{m-2} \vee 1}\right]\\
&\leq \frac{1}{t} \int_0^t du\,\exp\left[\bar{c}
\int_\R dv\,\,\frac{1}{\left\vert\beta(u)-\beta(v)\right\vert^{m-2} \vee 1}\right] .
\end{aligned}
\end{equation}
Taking the expectation and using the translation invariance of Brownian motion,
we obtain the $t$-independent bound
\begin{equation}
\label{eqest2}
\begin{aligned}
\E_0\left(\exp\left[\frac{c}{\cC(t)}\right]\right)
&\leq \E_0\left(\exp\left[\bar{c}
\int_\R dv\,\,\frac{1}{\left\vert\beta(v)\right\vert^{m-2} \vee 1}\right]\right)\\
&\leq \E_0\left(\exp\left[2\bar{c}
\int_0^\infty dv\,\, \frac{1}{\left\vert\beta(v)\right\vert^{m-2} \vee 1}\right]\right),
\end{aligned}
\end{equation}
and so it remains to show that the right-hand side is finite for $c$ small enough.
Arguing in the same way as in the proof of Lemma~\ref{lemcapexp}, we obtain
\begin{equation}
\label{eqest3}
\begin{aligned}
&\E_0\left(\exp\left[2\bar{c}
\int_0^\infty dv\,\,\frac{1}{\left\vert\beta(v)\right\vert^{m-2} \vee 1}\right]\right) \\
&\leq \sum_{k \in \N_0} (2\bar{c})^k
\,\E_0\left(\int_{0 \leq v_1 < \cdots < v_k < \infty} \prod_{i=1}^k
\frac{dv_i}{\left\vert\beta(v_i)\right\vert^{m-2} \vee 1}\right) \\
&\leq \sum_{k \in \N_0} (2\bar{c})^k
\left[\int_0^\infty dv\,\, \E_0\left(\frac{1}{\left\vert\beta(v)\right\vert^{m-2} \vee 1}\right)\right]^k.
\end{aligned}
\end{equation}
Therefore it remains to prove the finiteness of the integral. That that end, we estimate
\begin{equation}
\label{vintest}
\begin{aligned}
\int_0^\infty dv\,\,\E_0\left(\frac{1}{\left\vert\beta(v)\right\vert^{m-2} \vee 1}\right)
&\leq 1 + \int_1^\infty dv\,\, \E_0\left(\left\vert\beta(v)\right\vert^{-(m-2)} \wedge 1\right)\\
&\leq 1 + \int_1^\infty dv\,\, \E_0\left(\left\vert\beta(v)\right\vert^{-(m-2)}\right)\\
&= 1+ \E_0\left(\left\vert\beta(1)\right\vert^{-(m-2)}\right) \int_1^\infty dv\, v^{-(m-2)/2} < \infty,
\end{aligned}
\end{equation}
where the last inequality holds because $m\geq 5$.

We finish by proving the claim for $m = 4$. Let $\bar{c} = c\,C/\kappa_4\omega_4^2$,
and replace \eqref{eqest1} by
\begin{equation*}
\exp\left[\frac{c}{\cC(t)}\right]
\leq \frac{1}{t} \int_0^t du\,\exp\left[\frac{\bar{c}}{\log t}
\int_{u-t}^{u+t} dv\,\,\frac{1}{\left\vert\beta(u)-\beta(v)\right\vert^2 \vee 1}\right],
\end{equation*}
and \eqref{eqest2} by
\begin{equation*}
\E_0\left(\exp\left[\frac{c}{\cC(t)}\right]\right)
\leq \E_0\left(\exp\left[\frac{2\bar{c}}{\log t}
\int_0^t dv\,\, \frac{1}{\left\vert\beta(v)\right\vert^2 \vee 1}\right]\right),
\end{equation*}
and \eqref{eqest3} by
\begin{equation*}
\E_0\left(\exp\left[\frac{2\bar{c}}{\log t}
\int_0^t dv\,\,\frac{1}{\left\vert\beta(v)\right\vert^2 \vee 1}\right]\right) \\
\leq \sum_{k \in \N_0} \left(\frac{2\bar{c}}{\log t}\right)^k
\left[\int_0^t dv\,\, \E_0\left(\frac{1}{\left\vert\beta(v)\right\vert^2 \vee 1}\right)\right]^k,
\end{equation*}
and \eqref{vintest} by
\begin{equation*}
\int_0^t dv\,\,\E_0\left(\frac{1}{\left\vert\beta(v)\right\vert^2 \vee 1}\right)
= 1+ \E_0\left(\left\vert\beta(1)\right\vert^{-2}\right) \int_1^t dv\, v^{-1}
\leq c'\log t,
\end{equation*}
for some $c' \in (0,\infty)$.
\end{proof}


\subsection{Scaling of the capacity}
\label{proofcapscal}

We close by settling \eqref{capscalm4}. The proof for $m \geq 5$ is easy and uses subadditivity.
The proof for $m =4$ is much more complicated and is given in \cite{ASS}.

Note that capacity is subadditive: $\cp(W_1[0,s+t]) \leq \cp(W_1[0,s]) + \cp(W_1[s,s+t])$
for all $s,t \geq 0$. Hence, Kingman's subadditive ergodic theorem yields that
\begin{equation*}
\lim_{t\to\infty} t^{-1} \cp(W_1[0,t]) = \bar{c}_m \quad \beta-a.s.
\end{equation*}
for some $\bar{c}_m \geq 0$. We therefore get the claim with $c_m=\bar{c}_m$, provided
we show that $\bar{c}_m>0$.

In view of \eqref{caprepr*}, we can get a lower bound on capacity by choosing a test
probability measure. We again choose the sojourn measure of $W_1[0,t]$ in \eqref{muWiener}.
This gives
\begin{equation*}
\frac{t}{\cp(W_1[0,t])} \leq t \int_{\R^m} \int_{\R^m}
\frac{\mu_{W_1[0,t]}(dx) \mu_{W_1[0,t]}(dy)}{\kappa_m|x-y|^{m-2}}
= \frac{1}{t} \int_0^t du \int_0^t dv \int_{\R^m} \int_{\R^m}
\frac{\nu_{\beta(u)}(dx) \nu_{\beta(v)}(dy)}{\kappa_m|x-y|^{m-2}}.
\end{equation*}
Now, there exists a $C<\infty$ such that
\begin{equation*}
\int_{\R^m} \int_{\R^m}
\frac{\nu_a(dx) \nu_b(dy)}{\kappa_m|x-y|^{m-2}}
\leq \frac{C}{|a-b|^{m-2} \vee 1} \qquad \forall\,a,b \in \R^m.
\end{equation*}
Hence
\begin{equation*}
\frac{t}{\cp(W_1[0,t])} \leq \frac{1}{t} \int_0^t du \int_0^t dv \,\,
\frac{C}{|\beta(u)-\beta(v)|^{m-2} \vee 1}.
\end{equation*}
To prove that $\bar{c}_m>0$ it suffices to show that the right-hand side has a finite
expectation. To that end, we estimate
\begin{equation*}
\frac{1}{t} \int_0^t du \int_0^t dv \,\,
\E_0\left(\frac{1}{|\beta(u)-\beta(v)|^{m-2} \vee 1}\right)
\leq 2 \int_0^t dv \,\,
\E_0\left(\frac{1}{|\beta(v)|^{m-2} \vee 1}\right),
\end{equation*}
and note that, as shown in \eqref{vintest}, the integral converges as $t\to\infty$ when
$m \geq 5$.



\end{document}